\documentclass[preprint,12pt]{elsarticle}
%\documentclass[final,3p]{elsarticle}
%% Use the option review to obtain double line spacing
%% \documentclass[authoryear,preprint,review,12pt]{elsarticle}

%% Use the options 1p,twocolumn; 3p; 3p,twocolumn; 5p; or 5p,twocolumn
%% for a journal layout:
%% \documentclass[final,1p,times]{elsarticle}
%% \documentclass[final,1p,times,twocolumn]{elsarticle}
%% \documentclass[final,3p,times]{elsarticle}
%% \documentclass[final,3p,times,twocolumn]{elsarticle}
%% \documentclass[final,5p,times]{elsarticle}
%% \documentclass[final,5p,times,twocolumn]{elsarticle}

%% if you use PostScript figures in your article
%% use the graphics package for simple commands
%% \usepackage{graphics}
%% or use the graphicx package for more complicated commands
\usepackage{graphicx}
%% or use the epsfig package if you prefer to use the old commands
%% \usepackage{epsfig}

%% The amssymb package provides various useful mathematical symbols
\usepackage{amssymb}
%% The amsthm package provides extended theorem environments
\usepackage{booktabs}

\usepackage{amsfonts}
\usepackage{amssymb}
\usepackage{amsmath}          % math
\usepackage{algorithm2e}
\usepackage{amsthm}
\theoremstyle{plain} \newtheorem{theorem}{Theorem}
\theoremstyle{plain} \newtheorem{definition}{Definition}
\theoremstyle{plain} \newtheorem{corollary}{Corollary}
\theoremstyle{plain} \newtheorem{lemma}{Lemma}
\theoremstyle{definition} \newtheorem{algor}{Algorithm}
\theoremstyle{definition}

%% The lineno packages adds line numbers. Start line numbering with
%% \begin{linenumbers}, end it with \end{linenumbers}. Or switch it on
%% for the whole article with \linenumbers.
%% \usepackage{lineno}

\journal{}

\begin{document}

\begin{frontmatter}

%% Title, authors and addresses

%% use the tnoteref command within \title for footnotes;
%% use the tnotetext command for theassociated footnote;
%% use the fnref command within \author or \address for footnotes;
%% use the fntext command for theassociated footnote;
%% use the corref command within \author for corresponding author footnotes;
%% use the cortext command for theassociated footnote;
%% use the ead command for the email address,
%% and the form \ead[url] for the home page:
%% \title{Title\tnoteref{label1}}
%% \tnotetext[label1]{}
%% \author{Name\corref{cor1}\fnref{label2}}
%% \ead{email address}
%% \ead[url]{home page}
%% \fntext[label2]{}
%% \cortext[cor1]{}
%% \address{Address\fnref{label3}}
%% \fntext[label3]{}

\title{Accurate quotient-difference algorithm: error analysis, improvements and applications}

%% use optional labels to link authors explicitly to addresses:
%% \author[label1,label2]{}
%% \address[label1]{}
%% \address[label2]{}

\author[a]{Peibing Du\fnref{work0}}\ead{dupeibing10@nudt.edu.cn}
\author[b]{Roberto Barrio\fnref{work1}} \ead{rbarrio@unizar.es}
\author[c]{Hao Jiang\corref{cor}\fnref{work2}}\ead{haojiang@nudt.edu.cn}
\author[a,d]{Lizhi Cheng\fnref{work3}}\ead{clzcheng@nudt.edu.cn}

\cortext[cor]{Corresponding author}

\fntext[work0]{Partially supported by National Natural Science Foundation of China (No. 61571008, No. 61401515).}
\fntext[work1]{Partially
supported by the Spanish Research project MTM2015-64095-P and by the European Social Fund and Diputaci\'on General de Arag\'on (Grant E48).}
\fntext[work2]{Partially supported by National Natural Science Foundation of China (No. 61402495, No. 61602166, No. 61303189, No. 61402496).}
\fntext[work3]{Partially supported by Science Project of National University of Defense Technology (JC120201) and National Natural Science Foundation of Hunan Province in China (13JJ2001).}

\address[a]{School of Science, National University of Defense Technology, Changsha, 410073, China}
\address[b]{Dpto. Matem\'atica Aplicada and IUMA, University of Zaragoza, E-50009 Zaragoza, Spain}
\address[c]{College of Computer, National University of Defense Technology, Changsha, 410073, China}
\address[d]{The State Key Laboratory of High  Performance Computation, National University of Defense Technology, Changsha, 410073, China}

\begin{abstract}
The compensated quotient-difference ({\tt Compqd}) algorithm is proposed along with some applications.
The main motivation is based on the fact that the standard quotient-difference ({\tt qd}) algorithm can be numerically unstable.
The {\tt Compqd} algorithm is obtained by applying error-free transformations to improve the traditional {\tt qd} algorithm.
We study in detail the error analysis of the {\tt qd} and {\tt Compqd} algorithms and we introduce new condition numbers so that the relative forward rounding error bounds
can be derived directly.  Our numerical experiments illustrate that the {\tt Compqd} algorithm is much more accurate than the {\tt qd} algorithm, relegating the influence of the condition numbers up to second order in the rounding unit of the computer. Three applications of the new algorithm in the obtention of continued fractions and in pole and zero detection are shown.
\end{abstract}

\begin{keyword}
qd algorithm \sep compensated qd algorithm \sep error-free transformation \sep  rounding error \sep continued fractions \sep pole detection
\end{keyword}

\end{frontmatter}

%%\linenumbers

%% main text
%%------------------------------------------------------------------------Introduction-------------------------------------------------------------------------------------%%
\section{Introduction}\label{sec 1 intro}

The quotient-difference ({\tt qd}) algorithm was proposed by Rutishauser from previous works of Hadamard \cite{hadamard}, Aitken \cite{aitken1,aitken2}, and Lanczos \cite{lanczos} (for details see \cite{gutknecht}). This algorithm is highly related to the Pad\'e approximation \cite{lorentzen,CPVWJ,JT80} techniques. The {\tt qd} algorithm, and its variants, have  numerous applications. For instance, it  can be used to obtain the continuous fraction representation of meromorphic functions given by its power series development \cite{CPVWJ,JT80,Henrici}. It is  also related with complex analysis, as it provides a direct method to locate poles of complex functions \cite{Henrici,AC2010} and zeros of polynomials \cite{AC2010,rutishauser1}. Besides, in eigenvalue computation, the {\tt progressive qd} algorithm \cite{AC2010} has a relevant role as it can be interpreted as the LR transform for a tridiagonal matrix \cite{rutishauser2,parlett,fernando1994accurate}.

Unfortunately, in finite precision arithmetic, the quotient-difference algorithm has been shown in experiments to be numerically unstable. It is overly sensitive to rounding errors. As a consequence, high-precision arithmetic or exact arithmetic are recommended to overcome such a problem \cite{CUYT1997}.
In order to increase the accuracy and stability of algorithms for ill-conditioned problems, several researchers studied their corresponding accurate compensated algorithms by applying error-free transformations \cite{ORO,Rump081,Rump082} which can yield, in most circumstances, a full precision accuracy in standard precision. For instance, to evaluate ill-conditioned polynomials with floating-point coefficients, Graillat \emph{et al.} \cite{GLL1,GLL2,LL} proposed a compensated Horner algorithm to evaluate polynomials in monomial basis; Jiang \emph{et al.} \cite{JLCS,JBLLCS,DJC} presented compensated de-Casteljau and Clenshaw algorithms to evaluate polynomials in Bernstein, Chebyshev and Legendre basis, respectively.

In this paper, we first perform a complete analysis of the stability of the quotient-difference algorithm by providing forward rounding error bounds and we introduce condition numbers adapted to the problem that permit to give a simple error bound that helps to locate the instability problems. The bounds shown in this paper provide a theoretical statement of the numerical simulations in literature. To overcome, or at least, to delay the appearance of instability problems in standard precision, we introduce a new more accurate algorithm, the compensated quotient-difference algorithm. The proposed algorithm is based on error-free transformations. To obtain the compensated quotient-difference algorithm we consider, especially, the division operation in each inner loop which has never been used in previous works of compensated algorithms. Again, we perform a complete analysis of the stability and now, from the forward rounding error bounds, we observe that the condition numbers are multiplied by the square of the rounding unit, instead of the rounding unit. This result states that the proposed compensated quotient-difference algorithm is much more stable than the standard quotient-difference algorithm in working precision.

The paper is organized as follows. In Section \ref{sec2}, we introduce the classical {\tt qd} algorithm, some basic notations about floating-point arithmetic and error-free transformations. Section \ref{sec 3 err} presents the error analysis of the {\tt qd} algorithm and its condition numbers. In Section \ref{sec 4 comp}, the proposed new compensated {\tt qd} algorithm,  {\tt Compqd}, is provided. Section \ref{sec 5 comp err} presents the forward rounding error bounds of the {\tt Compqd} algorithm. Finally, in Section \ref{test}, we give several numerical experiments together with three practical applications to illustrate the efficiency, accuracy and stability of the new {\tt Compqd} algorithm. In the Appendices all the algorithms are detailed, and besides, a new compensated version of the \emph{progressive} form of the {\tt qd} scheme ({\tt Compproqd} algorithm) is given.

\section{Preliminaries}\label{sec2}
In this section we review the  classical {\tt qd} algorithm (Subsection \ref{subsec 2.1}). In order to perform the detailed error analysis of the algorithms, we give some basic notations (Subsection \ref{subsec 2.2}) and we present the error-free transformations (Subsection \ref{subsec 2.3}).

%----------------------------------------------------------------qd algorithm------------------------------------------------------------------
\subsection{The quotient-difference algorithm}\label{subsec 2.1}
Along this paper, quotient-difference is called {\tt qd} for short and we assume that the conditions for the existence of the {\tt qd} scheme (also known as the {\tt qd} table \cite{rutishauser}) are satisfied.

Considering the formal power series
\begin{equation}\label{FPS}
f(z)=c_0+c_1z+c_2z^2+\cdots \equiv \sum_{k=0}^{\infty}c_k z^k,
\end{equation}
where $c_i\in\mathbb{R}$,
we define its double sequence of Hankel determinants by

\begin{equation*}
\label{HDet}
H_m^{(n)}= \left|
\begin{array}{llll}
c_n & c_{n+1} & \cdots & c_{n+m}\\
c_{n+1} & c_{n+2} & \cdots & c_{n+m+1}\\
\vdots & \cdots & \cdots & \vdots\\
c_{n+m} & c_{n+m+1} & \cdots & c_{n+2m}
  \end{array}\right|, \qquad n, m \in \mathbb{N}.
  \end{equation*}
A remarkable connection among Hankel determinants \cite{CPVWJ} is given by
\begin{equation}\label{Hankel connection}
  (H_m^{(n)})^2+H_{m+1}^{(n-1)}H_{m-1}^{(n+1)}=H_{m}^{(n-1)}H_{m}^{(n+1)}.
\end{equation}
If we define
\begin{equation}\label{q and e}
  q_m^{(n)}=\frac{H_m^{(n+1)}H_{m-1}^{(n)}}{H_m^{(n)}H_{m-1}^{(n+1)}},~ ~ e_m^{(n)}=\frac{H_{m+1}^{(n)}H_{m-1}^{(n+1)}}{H_m^{(n)}H_{m}^{(n+1)}},
\end{equation}
then the previous relationship (\ref{Hankel connection}) can be interpreted as the following addition rhombus  rule
\begin{equation}\label{add q and e}
  q_m^{(n)}+e_m^{(n)}=q_m^{(n+1)}+e_{m-1}^{(n+1)},
\end{equation}
and, considering the definition (\ref{q and e}), $q_m^{(n)}$ and $e_m^{(n)}$ give the product rhombus rule
\begin{equation}\label{product q and e}
  q_m^{(n+1)}e_m^{(n+1)}=q_{m+1}^{(n)}e_{m}^{(n)}.
\end{equation}

\noindent Hence, both rhombus relations, (\ref{add q and e}) and (\ref{product q and e}),  give rise to the classical {\tt qd}  algorithm:\\
\vspace{0.2cm}

\hrule \vspace*{-0.2cm}
\begin{algor}{\tt qd} \label{Algor qd}\\
\indent~~{\bf input :} {$e_0^{(n)}=0$, $n=1,2,...$; $q_1^{(n)}=\frac{c_{n+1}}{c_n}$, $n=0,1,...$}\\
\indent~~{\bf output :}~~{qd scheme}\\
\indent~~{\bf for} ~{$m= 1, 2, ...$}\\
\indent~~\indent{\bf for} {$n= 0, 1, ...$}\\
 \indent~~\indent~~~~ $e_m^{(n)}=q_m^{(n+1)}-q_m^{(n)}+e_{m-1}^{(n+1)}$\\
\indent~~\indent~~~~  $q_{m+1}^{(n)}=({e_{m}^{(n+1)}}/{e_{m}^{(n)}}) \times q_{m}^{(n+1)}$\\
\indent~~\indent{\bf end}\\
\indent~~{\bf end}
\end{algor} \vspace*{-0.2cm}
\hrule \vspace{0.4cm}

The way of computing in Algorithm \ref{Algor qd} is explained in
the following {\tt qd} table showing the data connection via the above two rhombus rules from the first \emph{q}-column moving right.
\begin{equation}\label{qd table}
\begin{array}{cccccc}
     &q_1^{(0)} & & & & \\
    0& & e_1^{(0)}& & & \\
     &q_1^{(1)} & &q_2^{(0)} & & \\
     0& & e_1^{(1)}& &e_2^{(0)} & \\
      &q_1^{(2)} & &q_2^{(1)} & &q_3^{(0)} \\
      0& & e_1^{(2)}& &e_2^{(1)} & \\
      &\vdots & &q_2^{(2)} & &\vdots \\
      & & \vdots & &\vdots & \\
  \end{array}
\end{equation}

%----------------------------------------------------------------Notations------------------------------------------------------------------
\subsection{Basic notations}\label{subsec 2.2}

In this paper we assume to work with floating-point arithmetics adhering to IEEE-754 floating-point standard rounding to nearest. In our analysis we assume that there is no computational overflow or underflow. Let $op\in\{\oplus,\ominus,\otimes,\oslash\}$ represents a floating-point computation, and the evaluation of an expression in floating-point arithmetic is denoted $fl(\cdot)$, then its computation obeys the model
\begin{equation}\label{floating-point model}
a~op~b=fl(a\circ{b})=(a\circ{b})(1+\varepsilon_1)=(a\circ{b})/(1+\varepsilon_2),
\end{equation}
where $a,b\in\mathbb{F}$ (the set of floating-point numbers), $\circ\in\{+,-,\times,\div\}$ and $|\varepsilon_1|,|\varepsilon_2|\leq{u}$ ($u$ is the rounding unit of the computer).

For the following error analysis, let $v,x,y,z\in\mathbb{R}$ and
\begin{equation*}
\begin{split}
v&=x\circ{y}\circ{z},\\
\widehat{v}&=fl(\widehat{x}\circ\widehat{y}\circ\widehat{z}),\\
\widetilde{v}&=\widehat{x}\circ\widehat{y}\circ\widehat{z}.
\end{split}
\end{equation*}
Here, $\widehat{x}=fl(x)$, $\widehat{y}=fl(y)$, $\widehat{z}=fl(z)$, and $\widehat{x},\widehat{y},\widehat{z}\in\mathbb{F}$. The second equation can be rewritten as $\widehat{v}=\widehat{x}~op~\widehat{y}~op~\widehat{z}$.
We list some notations in Table \ref{notations} which will be helpful to understand this paper.

\begin{table}[ht]
\centering
\caption{Some notations for error analysis}\label{notations}
\begin{tabular}{c|c|c}\hline
Notations & Description & Equation \\\hline\hline
$e_m^{(n)}$ & the exact value & (\ref{theoretical e qd}) \\\hline
$\widetilde{e}_m^{(n)}$ & the result with the perturbed inputs in real arithmetic & (\ref{absolute perturbation definition 1}), (\ref{theoretical e qd}) \\\hline
$\widehat{e}_m^{(n)}$ & the result computed in floating-point arithmetic& (\ref{floating eq qd}), (\ref{absolute perturbation definition 2}) \\\hline
$\bar{e}_m^{(n)}$ & the proposed condition number & (\ref{abs e}) \\\hline
$\epsilon e_m^{(n)}$ & the compensated term of $\widehat{e}_m^{(n)}$& (\ref{absolute perturbation definition 1}), (\ref{step1 compqd}) \\\hline
$\widetilde{\epsilon e}_m^{(n)}$ & the perturbation for $\widetilde{e}_m^{(n)}$ & (\ref{absolute perturbation definition 2}) \\\hline
$\widehat{\epsilon e}_m^{(n)}$ & the approximate compensated term for $\widehat{e}_m^{(n)}$ & (\ref{approx step1 compqd}), (\ref{per of compensated term}) \\\hline
$\widetilde{\epsilon e}_m^{(n)*}$ & the approximate perturbation for $\widetilde{e}_m^{(n)}$ & (\ref{real step1 compqd}) \\\hline
$\epsilon\epsilon e_m^{(n)}$ & the compensated term of $\widehat{\epsilon e}_m^{(n)}$ & (\ref{per of compensated term}) \\\hline
\end{tabular}
\end{table}

The following definition and properties will also be used in error analysis (see more details in \cite{H}).
\begin{definition}\label{def1}
We define
\begin{equation*}
1+\theta_n=\prod^n_{i=1}(1+\delta_i)^{\rho_i},
\end{equation*}
where $|\delta_i|\leq{u},\rho_i=\pm1$ for  $i=1,2,\ldots,n$, $|\theta_n|\leq{\gamma_n}:=\dfrac{nu}{1-nu}=nu+\mathcal{O}(u^2)$ and $nu<1$.
\end{definition}

Other basic properties which will also be used in error analysis are given by:
\begin{itemize}
  \item $u+\gamma_k\leq\gamma_{k+1},$
  \item $i\gamma_k<\gamma_{ik},$
  \item $\gamma_k+\gamma_j+\gamma_k\gamma_j\leq{\gamma_{k+j}}.$
\end{itemize}

%-----------------EFT------------------------------------
\subsection{Error-free transformations}\label{subsec 2.3}
The development of some families of more stable algorithms, which are called \emph{compensated algorithms}~\cite{rump}, is based on the paper \cite{ORO} about \emph{Error-Free Transformations} (EFT). For a pair of floating-point numbers $a,b\in\mathbb{F}$, when no underflow occurs, there exists a floating-point number $y$ satisfying $a \circ b=x+y$, where $x={\rm fl}(a \circ b)$ and $\circ{\in}\{+ , - , \times \}$. Then the transformation
$(a,b)\longrightarrow (x,y)$ is regarded as an EFT. For division, the corresponding EFT is constructed using the reminder, so its definition is slightly different (see below).
The EFT algorithms of the addition, product and division of two floating-point numbers used later in this paper are the {\tt TwoSum} algorithm \cite{Knuth98}, the {\tt TwoProd} algorithm  \cite{Dekker71} and the {\tt DivRem} algorithm  \cite{ N.Louvet08,M.Pichat93}, respectively (see Appendix A). The following two theorems exhibit the main properties of those algorithms.

\begin{theorem}\label{EFT dingli}\textnormal{\cite{ORO}}
For  $a, b \in \mathbb{F}$ and $x, y \in \mathbb{F}$, when no underflow occurs, {\tt FastTwoSum}, {\tt TwoSum} and
{\tt TwoProd} algorithms verify
\begin{align*}
&[x,y]={\tt FastTwoSum}(a,b), \quad x={\rm fl}(a+b), \quad x+y=a+b, \quad
|y|\leq  u|x|, \quad |y|\leq {u}|a+b|,\\
&[x,y]={\tt TwoSum}(a,b), \quad x={\rm fl}(a+b), \quad x+y=a+b, \quad
|y|\leq  u|x|, \quad |y|\leq {u}|a+b|,\\
&[x,y]={\tt TwoProd}(a,b), \quad x={\rm fl}(a \times b), \quad x+y=a \times
b, \quad |y|\leq u|x|, \quad |y|\leq {u}|a\times b|.
\end{align*}
\end{theorem}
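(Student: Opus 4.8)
The plan is to treat each of the three algorithms separately, but in every case to split the claim into two parts: first the \emph{exactness} of the decomposition $x+y=a\circ b$ (with $\circ\in\{+,\times\}$), and then the two magnitude bounds, which I expect to follow immediately and uniformly from the floating-point model (\ref{floating-point model}). Indeed, once exactness is known, write $s=a\circ b$ for the exact real value and $x={\rm fl}(s)$. The model gives simultaneously $x=s(1+\varepsilon_1)$ and $s=x(1+\varepsilon_2)$ with $|\varepsilon_1|,|\varepsilon_2|\leq u$. Since $y=s-x$, the first representation yields $y=-s\varepsilon_1$, hence $|y|\leq u|s|=u|a\circ b|$, while the second gives $y=x\varepsilon_2$, hence $|y|\leq u|x|$. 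Thus all four inequalities in each line of the statement reduce to the single nontrivial fact that the second operation of each algorithm recovers the rounding error \emph{without any further error}.

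For {\tt FastTwoSum}, which assumes $|a|\geq|b|$, I would establish exactness in two steps. First, under the ordering hypothesis the exponents of $a$ and of $x={\rm fl}(a+b)$ are compatible enough that the difference $x-a$ is exactly representable; hence $z={\rm fl}(x-a)$ satisfies $z=x-a$ with no rounding, by a Sterbenz-type exactness argument on the representation of floating-point numbers. Second, $b-z=b-(x-a)=(a+b)-x$ is again exactly representable, so $y={\rm fl}(b-z)=b-z$, which gives $x+y=a+b$. The obstacle here is to justify the exactness of these two subtractions rigorously from the floating-point representation rather than merely from the model.

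For {\tt TwoSum}, which removes the ordering hypothesis, the exactness $x+y=a+b$ is Knuth's classical result, and I expect this to be the hardest of the three. Since no single ordering can be assumed, I would follow Knuth by proving that each of the intermediate quantities ${\rm fl}(x-a)$, $a-{\rm fl}(x-z)$ and $b-z$ is computed \emph{exactly}, via a case analysis according to the relative magnitudes of $a$ and $b$, each case reducing to an instance where the subtraction of exponent-compatible numbers is exact. Combining these exact intermediate identities algebraically then yields $y=(a-(x-z))+(b-z)=(a+b)-x$.

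Finally, for {\tt TwoProd}, the exactness $x+y=a\times b$ rests on Veltkamp's splitting: each factor is written as $a=a_h+a_\ell$ with $a_h,a_\ell\in\mathbb{F}$ each carrying about half of the significand, so that all four partial products $a_hb_h$, $a_hb_\ell$, $a_\ell b_h$, $a_\ell b_\ell$ are exactly representable. I would first verify that the split itself is error-free and that the half-length factors make the partial products exact, and then show that the prescribed combination of these partial products with $x={\rm fl}(a\times b)$ reconstructs the exact remainder $y=a\times b-x$. The delicate point is the bookkeeping of significand lengths guaranteeing that no partial product overflows the available precision; once that is secured, the bounds on $|y|$ follow from the model exactly as in the sum case.
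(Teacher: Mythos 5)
The paper itself offers no proof of this theorem: it is quoted directly from \cite{ORO}, where the three statements are in turn classical results of Dekker and Knuth, so there is no in-paper argument to compare yours against; it must be judged on its own terms. Its overall architecture is sound. The reduction of the four inequalities to the exactness statement $x+y=a\circ b$, using both forms of the model (\ref{floating-point model}) (from $x=s(1+\varepsilon_1)$ one gets $y=-s\varepsilon_1$ and $|y|\leq u|a\circ b|$; from $s=x(1+\varepsilon_2)$ one gets $y=x\varepsilon_2$ and $|y|\leq u|x|$), is exactly the standard argument, and your sketches for {\tt FastTwoSum} (Dekker: under $|a|\geq|b|$ the first subtraction is exact, and the second rounds $a+b-x$, which is always a representable number, hence is also exact) and for {\tt TwoProd} (Veltkamp splitting makes the four partial products exact, after which the prescribed combination incurs no rounding) are the correct classical proofs.

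The genuine gap is in the {\tt TwoSum} part. You propose to show that every intermediate quantity, \emph{including} $z={\rm fl}(x-a)$, is computed exactly. That first claim is false in general: in IEEE double precision take $a=2^{-60}$, $b=1$; then $x={\rm fl}(a+b)=1$, while $x-a=1-2^{-60}$ requires $61$ significand bits, so $z={\rm fl}(x-a)=1\neq x-a$. This is not a pathological case; it occurs whenever the smaller summand is partially absorbed into $x$. Knuth's theorem is more delicate than your reading of it: the case analysis establishes exactness only of the \emph{subsequent} operations, namely ${\rm fl}(x-z)=x-z$, ${\rm fl}\big(a-(x-z)\big)=a-(x-z)$, ${\rm fl}(b-z)=b-z$, and the final addition, while $z$ itself may be rounded. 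The inexactness of $z$ is harmless precisely because $z$ cancels in the identity $y=\big(a-(x-z)\big)+(b-z)=a+b-x$, which is the algebra you wrote at the end. So your concluding computation is right, but the list of operations whose exactness must (and can) be proved is wrong, and as stated your case analysis would be attempting to prove a false lemma; the repair is to prove exactness of the last four operations only, which is where Sterbenz-type arguments and the sign/magnitude case distinctions are actually needed.
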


\begin{theorem}\label{EFT div dingli}\textnormal{\cite{N.Louvet08}}
For  $a, b \in \mathbb{F}$ and $q, r \in \mathbb{F}$, when no underflow occurs, {\tt DivRem} algorithm verifies
\begin{equation*}
[q,r]={\tt DivRem}(a,b), \quad a=b\times q+r, \quad q=a\oslash b, \quad
|r|\leq  u|b\times q|, \quad |r|\leq {u}|a|.
\end{equation*}
\end{theorem}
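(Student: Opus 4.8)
The plan is to verify the four assertions of the statement in turn, isolating the only nontrivial point — the exactness of the remainder — and deducing everything else from the standard model (\ref{floating-point model}). Recall that {\tt DivRem}$(a,b)$ first sets $q={\rm fl}(a\oslash b)$, then recovers the remainder: using the exact splitting $[v,w]={\tt TwoProd}(b,q)$, so that $v+w=b\times q$ with $v={\rm fl}(b\times q)$ by Theorem \ref{EFT dingli}, it returns $r={\rm fl}\big((a-v)-w\big)$ (equivalently, $r$ is obtained by a single fused multiply-add evaluating $a-b\times q$). The assertion $q=a\oslash b$ is then immediate from the first line of the algorithm, so the whole statement reduces to proving that the computed $r$ equals the real number $a-b\times q$ exactly, i.e. that $a=b\times q+r$.

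Granting that identity, the two magnitude bounds are one-line consequences of (\ref{floating-point model}). Writing $q={\rm fl}(a\oslash b)=(a/b)(1+\varepsilon_1)$ with $|\varepsilon_1|\le u$ gives $b\times q=a(1+\varepsilon_1)$, hence $r=a-b\times q=-a\,\varepsilon_1$ and $|r|\le u|a|$. Writing instead $q=(a/b)/(1+\varepsilon_2)$ with $|\varepsilon_2|\le u$ gives $a=b\times q\,(1+\varepsilon_2)$, hence $r=b\times q\,\varepsilon_2$ and $|r|\le u|b\times q|$. Thus both bounds follow purely algebraically once the exactness of $r$ is in hand, and there is nothing left to estimate.

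The main obstacle, therefore, is to show that $a-b\times q$ is itself a floating-point number, so that the evaluation $(a-v)-w$ (or the single FMA) incurs no rounding error and returns it exactly. I would argue this through an exponent/ulp analysis. Since $q$ is the correctly rounded quotient, the rounding error satisfies $|a/b-q|\le\tfrac12\,\mathrm{ulp}(q)$, and multiplying by $|b|$ bounds $|a-b\times q|\le\tfrac12\,|b|\,\mathrm{ulp}(q)$. The crux is then to check that this quantity is an integer multiple of the unit in the last place at a scale no finer than the granularity of $a$ and $b\times q$, so that the trailing bits produced by the exact product $b\times q$ cancel against those of $a$ and the difference fits within the available mantissa. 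This representability of the division remainder under round-to-nearest is the one genuinely arithmetic-structure-dependent fact; it is classical, being precisely the content of \cite{N.Louvet08,M.Pichat93}, and, assuming no underflow, it guarantees that both subtractions in $(a-v)-w$ are error-free, so that the returned $r$ is exactly $a-b\times q$. Combining this exactness with the two bounds of the previous paragraph completes the proof.
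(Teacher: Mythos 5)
The paper offers no internal proof to compare against: Theorem \ref{EFT div dingli} is quoted directly from Louvet's thesis \cite{N.Louvet08}. Your proposal therefore fills in what the paper delegates to the literature, and its overall structure is right. The reduction is exactly the correct one: granting $a=b\times q+r$ with the computed $r$, the claim $q=a\oslash b$ is the first line of Algorithm \ref{TwoDiv}, and your two one-line derivations of $|r|\leq u|a|$ and $|r|\leq u|b\times q|$ from the two equivalent forms of the model (\ref{floating-point model}) are correct and complete; using both forms of the model is the cleanest way to get both bounds simultaneously.

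The one flaw is in the exactness step. Representability of $a-b\times q$ does \emph{not} by itself ``guarantee that both subtractions in $(a-v)-w$ are error-free.'' It only disposes of the \emph{second} subtraction: if the exact value $(a-v)-w=a-b\times q$ is a floating-point number, then rounding returns it --- but only provided the inner difference $a-v$ was itself computed exactly, since otherwise the inner rounding error propagates and the final result need not equal $a-b\times q$ even though that quantity is representable. The exactness of the first subtraction $a\ominus v$ is a separate fact and requires Sterbenz's lemma: since $v=fl(b\times q)$ and $q=fl(a/b)$, one has $v=a(1+\theta_2)$ with $|\theta_2|\leq\gamma_2$, so $v$ and $a$ have the same sign and $|a|/2\leq|v|\leq 2|a|$ (absent underflow), whence $a\ominus v=a-v$ exactly. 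Note that your parenthetical FMA variant is precisely the case where representability alone \emph{does} suffice, because an FMA forms the exact $a-b\times q$ and rounds once; but the paper's {\tt DivRem} (Algorithm \ref{TwoDiv}) is the {\tt TwoProd}-plus-two-subtractions version, for which the Sterbenz step is genuinely needed. With that step inserted, your argument --- Sterbenz for the first subtraction, representability of the round-to-nearest division remainder (the classical fact you correctly attribute to \cite{N.Louvet08,M.Pichat93}) for the second --- is a complete proof of the exactness identity, and hence of the theorem.
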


%-%-----------------------------------------------------------------------Error analysis of qd-------------------------------------------------------------------------------------%%
\section{Error analysis of the {\tt qd} algorithm}\label{sec 3 err}
In order to perform the error analysis of the complete  {\tt qd} algorithm, we split the process in two parts.

We begin with the  error analysis for the following \emph{inner loop} of the {\tt qd} algorithm in floating-point arithmetic,  where bold characters mean the `outputs' and
the rest mean the `inputs':
\begin{equation}\label{inner loop part}
\begin{array}{cccc}
     &q_m^{(n)}& &  \\
    e_{m-1}^{(n+1)}& &\bf e_m^{(n)}& \\
     &q_m^{(n+1)} & &\bf q_{m+1}^{(n)}\\
     & & e_m^{(n+1)}&  \\
    \end{array}
\end{equation}
There are two steps in the inner loop. The output of the first step is $\bf e_m^{(n)}$, and its inputs are $e_{m-1}^{(n+1)}$, $q_m^{(n)}$ and $q_m^{(n+1)}$. In next step, the inputs are $q_m^{(n+1)}$, $\bf e_m^{(n)}$ and $e_m^{(n+1)}$, and the output is $\bf q_{m+1}^{(n)}$.

Based on the error analysis of Subsection \ref{subsec3.1} of the inner loop, we obtain the rounding error bounds of the complete {\tt qd} algorithm by using mathematical induction in Subsection \ref{subsec3.2}.

%------Error analysis for inner loop of qd------------------------------
\subsection{Error analysis for the inner loop of the {\tt qd} algorithm}\label{subsec3.1}

In the proof of the stability analysis,  we first consider the perturbations of the floating-point inputs
\begin{equation}\label{absolute perturbation definition 1}
\begin{split}
\widehat{e}_m^{(n)} &= e_m^{(n)}+ \epsilon e_m^{(n)},\\
\widehat{q}_m^{(n)} &= q_m^{(n)}+ \epsilon q_m^{(n)},
\end{split}
\end{equation}
in the inner loop of the {\tt qd} algorithm.
Let
\begin{equation}\label{absolute perturbation definition 2}
\begin{split}
\widetilde{e}_m^{(n)} &= e_m^{(n)}+ \widetilde{\epsilon e}_m^{(n)},\\
\widetilde{q}_m^{(n)} &= q_m^{(n)}+ \widetilde{\epsilon q}_m^{(n)}.
\end{split}
\end{equation}
Here,
\begin{equation}\label{theoretical e qd}
\begin{split}
e_m^{(n)}&=q_m^{(n+1)}-q_m^{(n)}+e_{m-1}^{(n+1)},\\
\widetilde{e}_m^{(n)}&=\widehat{q}_m^{(n+1)}-\widehat{q}_m^{(n)}+\widehat{e}_{m-1}^{(n+1)},
\end{split}
\end{equation}
and
\begin{equation}\label{theoretical q qd}
\begin{split}
q_{m+1}^{(n)}&=\frac{e_{m}^{(n+1)}}{e_{m}^{(n)}}\times q_{m}^{(n+1)},\\
\widetilde{q}_{m+1}^{(n)}&=\frac{\widehat{e}_{m}^{(n+1)}}{\widehat{e}_{m}^{(n)}}\times \widehat{q}_{m}^{(n+1)}.
\end{split}
\end{equation}
In Equations~(\ref{theoretical e qd}) and (\ref{theoretical q qd}), all the computations are performed using real arithmetic without rounding error.
However, if all the computations are performed in floating-point arithmetic, we have
\begin{equation}\label{floating eq qd}
\begin{split}
fl(\widetilde{e}_m^{(n)})=\widehat{e}_m^{(n)}&=\widehat{q}_m^{(n+1)}\ominus\widehat{q}_m^{(n)}\oplus\widehat{e}_{m-1}^{(n+1)},\\
fl(\widetilde{q}_{m+1}^{(n)})=\widehat{q}_{m+1}^{(n)}&=fl(fl(\widehat{e}_{m}^{(n+1)}/\widehat{e}_{m}^{(n)}) \times \widehat{q}_{m}^{(n+1)}).
\end{split}
\end{equation}

The following Lemma~\ref{absolute perturbation of qd_0} gives the absolute  perturbation bounds of the floating-point inputs (\ref{absolute perturbation definition 1}) for the  inner loop of the {\tt qd} algorithm.

\begin{lemma}\label{absolute perturbation of qd_0}
The absolute perturbation bounds in the  inner loop of the {\tt qd} algorithm, considering floating-point inputs in real arithmetic, are given by
\begin{equation}\label{absolute perturbation bound e_0}
|\widetilde{\epsilon e}_m^{(n)}|\leq|\epsilon q_m^{(n+1)}|+|\epsilon q_m^{(n)}|+|\epsilon e_{m-1}^{(n+1)}|,
\end{equation}
and
\begin{equation}\label{absolute perturbation bound q_0}
|\widetilde{\epsilon q}_{m+1}^{(n)}|\leq \bar{\alpha}_{m+1}^{(n)},
\end{equation}
where
\begin{equation}\label{abs alpha}
\bar{\alpha}_{m+1}^{(n)}={b_m^{(n)}}\times\frac{|q_m^{(n+1)}||\epsilon e_{m}^{(n+1)}|+|e_{m}^{(n+1)}||\epsilon q_m^{(n+1)}|+|q_{m+1}^{(n)}||\epsilon e_m^{(n)}|+|\epsilon q_{m}^{(n+1)}||\epsilon e_{m}^{(n+1)}|}{|e_{m}^{(n)}|},
\end{equation}
with
\begin{equation}\label{bmn}
 b_m^{(n)}=\big|\frac{e_m^{(n)}}{e_m^{(n)}+\epsilon e_m^{(n)}}\big|,
\end{equation}
assuming $e_m^{(n)}\neq0$ and $\frac{\epsilon e_m^{(n)}}{e_m^{(n)}}\neq-1$.
\end{lemma}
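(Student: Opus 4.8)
The plan is to prove the two bounds separately, since the first output $\mathbf{e}_m^{(n)}$ is produced by additions and subtractions only, whereas the second output $\mathbf{q}_{m+1}^{(n)}$ involves a division.

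For the bound (\ref{absolute perturbation bound e_0}), I would subtract the exact relation $e_m^{(n)}=q_m^{(n+1)}-q_m^{(n)}+e_{m-1}^{(n+1)}$ from the perturbed-input relation $\widetilde{e}_m^{(n)}=\widehat{q}_m^{(n+1)}-\widehat{q}_m^{(n)}+\widehat{e}_{m-1}^{(n+1)}$ of (\ref{theoretical e qd}). Inserting $\widehat{q}_m^{(n+1)}=q_m^{(n+1)}+\epsilon q_m^{(n+1)}$ and the analogous expressions for the other two inputs from (\ref{absolute perturbation definition 1}), all exact values cancel and one is left with $\widetilde{\epsilon e}_m^{(n)}=\epsilon q_m^{(n+1)}-\epsilon q_m^{(n)}+\epsilon e_{m-1}^{(n+1)}$. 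A single application of the triangle inequality then gives (\ref{absolute perturbation bound e_0}).

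The bound (\ref{absolute perturbation bound q_0}) is the substantive part. I would form $\widetilde{\epsilon q}_{m+1}^{(n)}=\widetilde{q}_{m+1}^{(n)}-q_{m+1}^{(n)}$ from (\ref{theoretical q qd}) together with the perturbation definitions, and place it over the common denominator $e_m^{(n)}\big(e_m^{(n)}+\epsilon e_m^{(n)}\big)$. Expanding the numerator, the unperturbed term cancels against $q_{m+1}^{(n)}e_m^{(n)}$; then, applying the exact relation $e_m^{(n+1)}q_m^{(n+1)}=q_{m+1}^{(n)}e_m^{(n)}$ (which is just (\ref{theoretical q qd}) rearranged) to the surviving $e_m^{(n+1)}q_m^{(n+1)}\epsilon e_m^{(n)}$ contribution, I expect the whole numerator to reduce to $e_m^{(n)}$ times the four-term expression $e_m^{(n+1)}\epsilon q_m^{(n+1)}+q_m^{(n+1)}\epsilon e_m^{(n+1)}+\epsilon e_m^{(n+1)}\epsilon q_m^{(n+1)}-q_{m+1}^{(n)}\epsilon e_m^{(n)}$. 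Cancelling the common factor $e_m^{(n)}$ collapses the denominator to $e_m^{(n)}+\epsilon e_m^{(n)}$, and the triangle inequality then bounds $|\widetilde{\epsilon q}_{m+1}^{(n)}|$ by the numerator of $\bar{\alpha}_{m+1}^{(n)}$ divided by $\big|e_m^{(n)}+\epsilon e_m^{(n)}\big|$. Finally, rewriting $1/\big|e_m^{(n)}+\epsilon e_m^{(n)}\big|=b_m^{(n)}/|e_m^{(n)}|$ via the definition (\ref{bmn}) recovers exactly the stated form (\ref{abs alpha}); the hypotheses $e_m^{(n)}\neq0$ and $\epsilon e_m^{(n)}/e_m^{(n)}\neq-1$ are precisely what make the common-denominator step and the factor $b_m^{(n)}$ well defined.

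The main obstacle is the algebraic simplification of the division: one must expand the product-over-quotient correctly, recognize that the leading unperturbed term cancels, and use the product rhombus rule $e_m^{(n+1)}q_m^{(n+1)}=q_{m+1}^{(n)}e_m^{(n)}$ to rewrite the last cross term so that $e_m^{(n)}$ factors cleanly out of all four remaining numerator terms and cancels against one factor in the denominator. Everything past that cancellation is a routine triangle-inequality estimate together with the bookkeeping that converts $|e_m^{(n)}+\epsilon e_m^{(n)}|$ into the condition-style factor $b_m^{(n)}/|e_m^{(n)}|$.
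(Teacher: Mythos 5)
Your proposal is correct and follows essentially the same route as the paper: the paper's proof likewise derives $\widetilde{\epsilon e}_m^{(n)}=\epsilon q_m^{(n+1)}-\epsilon q_m^{(n)}+\epsilon e_{m-1}^{(n+1)}$ from the definitions and then states the closed form $\widetilde{\epsilon q}_{m+1}^{(n)}=\bigl(q_m^{(n+1)}\epsilon e_{m}^{(n+1)}+e_{m}^{(n+1)}\epsilon q_m^{(n+1)}-q_{m+1}^{(n)}\epsilon e_m^{(n)}+\epsilon q_{m}^{(n+1)}\epsilon e_{m}^{(n+1)}\bigr)/\bigl(e_{m}^{(n)}+\epsilon e_{m}^{(n)}\bigr)$, which is exactly what your common-denominator expansion, cancellation via the product rhombus rule $q_m^{(n+1)}e_m^{(n+1)}=q_{m+1}^{(n)}e_m^{(n)}$, and the rewriting $1/|e_m^{(n)}+\epsilon e_m^{(n)}|=b_m^{(n)}/|e_m^{(n)}|$ produce. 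The only difference is that you spell out the intermediate algebra that the paper leaves implicit ("Similarly, we have...").
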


\begin{proof}
From (\ref{absolute perturbation definition 1}) and (\ref{absolute perturbation definition 2}), we obtain that
\begin{equation}\label{equation absolut error e}
\widetilde{\epsilon e}_m^{(n)}=\epsilon q_m^{(n+1)}-\epsilon q_m^{(n)}+\epsilon e_{m-1}^{(n+1)},
\end{equation}
which gives us the first bound (\ref{absolute perturbation bound e_0}).

Similarly, we have
\begin{equation}\label{equation absolut error q}
\widetilde{\epsilon q}_{m+1}^{(n)}=\frac{q_m^{(n+1)}\epsilon e_{m}^{(n+1)}+e_{m}^{(n+1)}\epsilon q_m^{(n+1)}-q_{m+1}^{(n)}\epsilon e_m^{(n)}+\epsilon q_{m}^{(n+1)}\epsilon e_{m}^{(n+1)}}{e_{m}^{(n)}+\epsilon e_{m}^{(n)}}.
\end{equation}
Finally, if $e_m^{(n)}\neq0$ and $\frac{\epsilon e_m^{(n)}}{e_m^{(n)}}\neq-1$, we obtain (\ref{absolute perturbation bound q_0}).
\end{proof}

Assuming there exist the uniform bounds $|\epsilon e_{m}^{(n)}|\leq \epsilon_{abs}$,  $|\epsilon q_{m}^{(n)}|\leq \epsilon_{abs}$,  $\forall~ m,n \in \mathbb{N}$,
from Lemma \ref{absolute perturbation of qd_0} we will have
\begin{equation}\label{absolute perturbation e}
|\widetilde{e}_m^{(n)}-e_m^{(n)}|\leq 3\epsilon_{abs},
\end{equation}
and
\begin{equation}\label{absolute perturbation  q}
|\widetilde{q}_{m+1}^{(n)}- q_{m+1}^{(n)}|\leq b_m^{(n)}\times\frac{|q_m^{(n+1)}|+|e_{m}^{(n+1)}|+|q_{m+1}^{(n)}|}{|e_{m}^{(n)}|} \epsilon_{abs}+ \mathcal{O}(\epsilon_{abs}^2),
\end{equation}
where $b_m^{(n)}=\big|\frac{e_m^{(n)}}{e_m^{(n)}+\epsilon e_m^{(n)}}\big|$ with $e_m^{(n)}\neq0$ and $\frac{\epsilon e_m^{(n)}}{e_m^{(n)}}\neq-1$. It is obvious to see that
$
\lim\limits_{\epsilon e_m^{(n)}/e_m^{(n)}\longrightarrow -1}b_m^{(n)}\rightarrow\infty.
$

Consider that, under certain restrictions, the {\tt qd} scheme is constructed with the \emph{q}-columns tending to the reciprocal value of the simple pole of isolated modulus, while the corresponding \emph{e}-columns  tending to zero (see Theorem \ref{poles}). That is, for some $m$ we have that
\begin{equation*}
\lim_{n\longrightarrow \infty}e_m^{(n)}=0.
\end{equation*}
\noindent Then,
\begin{equation*}
\lim\limits_{n\longrightarrow \infty} \frac{|q_m^{(n+1)}|+|e_{m}^{(n+1)}|+|q_{m+1}^{(n)}|}{|e_{m}^{(n)}|}\rightarrow\infty.
\end{equation*}
This suggests that small absolute perturbations can cause large absolute errors in the computation of $q_{m+1}^{(n)}$ in (\ref{absolute perturbation  q}), that is, $\epsilon q_{m+1}^{(n)}$ may be large. Hence, $\epsilon_{abs}$  may not be small enough. Then, even in the case the computation of $e_{m+1}^{(n)}$ is well conditioned, which is  similar to that of the absolute error bound in~(\ref{absolute perturbation e}), the absolute perturbation bound (\ref{absolute perturbation  q}) can be arbitrary large. So, the {\tt qd} algorithm, as described in Subsection \ref{subsec 2.1}, can be highly unstable. Thus, our main question in this paper is oriented to improving its accuracy in order to use the algorithm in a wider range of situations (see Sections~\ref{sec 4 comp} and \ref{sec 5 comp err}).

%----------------------Roundoff error for inner loop of qd----------------------------------------
We have performed the perturbation analysis of the {\tt qd} algorithm. However, in practical numerical computations, the perturbation of the numerical results not only comes from the perturbation of inputs but also the accumulation of rounding errors generated in the algorithm itself. Now we focus on the obtention of the rounding error bounds for computing $\widetilde{e}_m^{(n)}$ in floating-point arithmetic, assuming that the floating-point inputs $\widehat{q}_m^{(n)}, \widehat{e}_{m-1}^{(n+1)}$ and $\widehat{q}_m^{(n+1)}$ are known exactly. Similarly, the rounding error bounds for computing $\widetilde{q}_{m+1}^{(n)}$ in floating-point arithmetic, assuming that the floating-point inputs $\widehat{e}_m^{(n)}, \widehat{q}_m^{(n+1)}$ and $\widehat{e}_m^{(n+1)}$ are known exactly.

\begin{lemma}\label{rounding error bound from exact inputs qd}
Let $fl(\widetilde{e}_m^{(n)})=\widehat{e}_m^{(n)}$ and $fl(\widetilde{q}_{m+1}^{(n)})=\widehat{q}_{m+1}^{(n)}$ are computed in floating-point arithmetic in the inner loop, then
\begin{equation}\label{rounding error bound e}
|fl(\widetilde{e}_m^{(n)})-\widetilde{e}_m^{(n)}|\leq\gamma_2\big(|\widehat{q}_m^{(n+1)}|+|\widehat{q}_m^{(n)}|+|\widehat{e}_{m-1}^{(n+1)}|\big),
\end{equation}
and
\begin{equation}\label{rounding error bound q}
|fl(\widetilde{q}_{m+1}^{(n)})-\widetilde{q}_{m+1}^{(n)}|\leq\gamma_2|\widetilde{q}_{m+1}^{(n)}|.
\end{equation}
\end{lemma}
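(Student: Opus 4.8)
The plan is to prove each bound by expanding the floating-point operations in (\ref{floating eq qd}) directly via the standard model (\ref{floating-point model}) and then collecting the rounding factors using the $\theta_n$/$\gamma_n$ notation of Definition~\ref{def1}. Both estimates reduce to careful bookkeeping of how many rounding factors multiply each input; there is no deep obstacle, only the need to keep the floating-point results $\widehat{e}_m^{(n)},\widehat{q}_{m+1}^{(n)}$ cleanly separated from their exactly-evaluated counterparts $\widetilde{e}_m^{(n)},\widetilde{q}_{m+1}^{(n)}$ defined in (\ref{theoretical e qd}) and (\ref{theoretical q qd}).

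For the first bound (\ref{rounding error bound e}), I would evaluate $\widehat{e}_m^{(n)}=\widehat{q}_m^{(n+1)}\ominus\widehat{q}_m^{(n)}\oplus\widehat{e}_{m-1}^{(n+1)}$ left to right as in Algorithm~\ref{Algor qd}. Applying (\ref{floating-point model}) to the subtraction introduces a factor $(1+\delta_1)$ and to the subsequent addition a factor $(1+\delta_2)$, with $|\delta_1|,|\delta_2|\leq u$, giving
\begin{equation*}
\widehat{e}_m^{(n)}=\widehat{q}_m^{(n+1)}(1+\delta_1)(1+\delta_2)-\widehat{q}_m^{(n)}(1+\delta_1)(1+\delta_2)+\widehat{e}_{m-1}^{(n+1)}(1+\delta_2).
\end{equation*}
Writing $(1+\delta_1)(1+\delta_2)=1+\theta_2$ and $(1+\delta_2)=1+\theta_1$ in the sense of Definition~\ref{def1}, subtracting $\widetilde{e}_m^{(n)}$ from (\ref{theoretical e qd}), and taking absolute values, the error equals $|\widehat{q}_m^{(n+1)}\theta_2-\widehat{q}_m^{(n)}\theta_2+\widehat{e}_{m-1}^{(n+1)}\theta_1|$. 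Since $|\theta_1|\leq\gamma_1\leq\gamma_2$ and $|\theta_2|\leq\gamma_2$, the triangle inequality yields exactly $\gamma_2\big(|\widehat{q}_m^{(n+1)}|+|\widehat{q}_m^{(n)}|+|\widehat{e}_{m-1}^{(n+1)}|\big)$.

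For the second bound (\ref{rounding error bound q}), I would apply (\ref{floating-point model}) to the division and then to the multiplication in $\widehat{q}_{m+1}^{(n)}=fl(fl(\widehat{e}_{m}^{(n+1)}/\widehat{e}_{m}^{(n)})\times\widehat{q}_{m}^{(n+1)})$. Here both operations multiply the entire quantity by a single rounding factor, so that, using (\ref{theoretical q qd}),
\begin{equation*}
\widehat{q}_{m+1}^{(n)}=\frac{\widehat{e}_m^{(n+1)}}{\widehat{e}_m^{(n)}}\widehat{q}_m^{(n+1)}(1+\delta_1)(1+\delta_2)=\widetilde{q}_{m+1}^{(n)}(1+\theta_2).
\end{equation*}
Hence $|\widehat{q}_{m+1}^{(n)}-\widetilde{q}_{m+1}^{(n)}|=|\theta_2|\,|\widetilde{q}_{m+1}^{(n)}|\leq\gamma_2|\widetilde{q}_{m+1}^{(n)}|$, the claimed bound.

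The only point requiring care is the asymmetry in the first computation: the input $\widehat{e}_{m-1}^{(n+1)}$ is touched by only the addition while the two $q$-terms are touched by both operations, so strictly one gets a $\gamma_1$ coefficient on the $e$-term. I expect this to be harmless, since $\gamma_1\leq\gamma_2$ lets the uniform $\gamma_2$ bound in the statement absorb it; this is the one spot where I would double-check that the left-to-right evaluation order matches Algorithm~\ref{Algor qd}.
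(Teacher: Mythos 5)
Your proof is correct and is exactly the argument the paper intends: the paper's own proof is the one-line remark that the bounds follow directly from the model (\ref{floating-point model}) applied to (\ref{floating eq qd}), and you have simply carried out that expansion, correctly tracking the $(1+\theta_2)$ factors on the two $q$-terms versus the $(1+\theta_1)$ factor on $\widehat{e}_{m-1}^{(n+1)}$ and absorbing the latter via $\gamma_1\leq\gamma_2$. Your closing remark about the asymmetry in the first bound is the right point of care, and your handling of it is exactly what makes the stated uniform $\gamma_2$ bound valid.
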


\begin{proof}
It can be directly obtained from (\ref{floating-point model}) and (\ref{floating eq qd}).
\end{proof}

%-------Roundoff error bounds for inner loop of qd---------------

From Lemma~\ref{absolute perturbation of qd_0} and Lemma~\ref{rounding error bound from exact inputs qd}, we can derive the rounding error bounds for the inner loop of the {\tt qd} algorithm in floating-point arithmetic.

\begin{lemma}\label{rounding error bound from perturbed inputs qd}
The rounding error bounds in the inner loop of the {\tt qd} algorithm, considering perturbed floating-point inputs, are given by
\begin{equation}\label{round error bound from perturbed e}
|fl({\widetilde{e}}_{m}^{(n)})-{e}_{m}^{(n)}|\leq \gamma_2\big(|{q}_m^{(n+1)}|+|{q}_m^{(n)}|+|{e}_{m-1}^{(n+1)}|\big)+(1+\gamma_2)\big(|\epsilon q_m^{(n+1)}|+|\epsilon q_m^{(n)}|+|\epsilon e_{m-1}^{(n+1)}|\big),
\end{equation}
and
\begin{equation}\label{round error bound from perturbed q}
|fl({\widetilde{q}}_{m+1}^{(n)})-{q}_{m+1}^{(n)}|\leq \gamma_2|{{q}}_{m+1}^{(n)}|+(1+\gamma_2)\bar{\alpha}_{m+1}^{(n)},
\end{equation}
where $\bar{\alpha}_{m+1}^{(n)}$ are defined in (\ref{abs alpha}), and
$\epsilon e_m^{(n)}$ and $\epsilon q_{m}^{(n)}$ are the perturbations of the inputs $\widehat{e}_m^{(n)}$ and $\widehat{q}_{m}^{(n)}$, respectively.
\end{lemma}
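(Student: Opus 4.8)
The plan is to combine the two preceding lemmas through the triangle inequality, splitting the total error into a \emph{rounding} contribution, controlled by Lemma~\ref{rounding error bound from exact inputs qd}, and a \emph{perturbation} contribution, controlled by Lemma~\ref{absolute perturbation of qd_0}. Concretely, for the first output I would write
\[
|fl(\widetilde{e}_m^{(n)})-e_m^{(n)}|\leq|fl(\widetilde{e}_m^{(n)})-\widetilde{e}_m^{(n)}|+|\widetilde{e}_m^{(n)}-e_m^{(n)}|,
\]
and observe, using the definition (\ref{absolute perturbation definition 2}), that the second term equals $|\widetilde{\epsilon e}_m^{(n)}|$, so it is bounded by (\ref{absolute perturbation bound e_0}); the identical decomposition works for $\widetilde{q}_{m+1}^{(n)}$.

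For the $e$-bound I would then apply (\ref{rounding error bound e}) to the first term and rewrite the hatted magnitudes it contains in terms of exact values and perturbations, using $|\widehat{q}_m^{(n)}|\leq|q_m^{(n)}|+|\epsilon q_m^{(n)}|$ and the analogous inequalities dictated by (\ref{absolute perturbation definition 1}). Grouping the $\gamma_2$-weighted perturbation terms produced this way with the perturbation bound (\ref{absolute perturbation bound e_0}) makes the common factor $(1+\gamma_2)$ appear in front of $|\epsilon q_m^{(n+1)}|+|\epsilon q_m^{(n)}|+|\epsilon e_{m-1}^{(n+1)}|$, which is exactly (\ref{round error bound from perturbed e}).

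For the $q$-bound the same triangle inequality, together with (\ref{rounding error bound q}) and (\ref{absolute perturbation bound q_0}), yields $|fl(\widetilde{q}_{m+1}^{(n)})-q_{m+1}^{(n)}|\leq\gamma_2|\widetilde{q}_{m+1}^{(n)}|+\bar{\alpha}_{m+1}^{(n)}$. The one step that is not purely mechanical---and the nearest thing to an obstacle---is removing the perturbed quantity $|\widetilde{q}_{m+1}^{(n)}|$ from the right-hand side so as to express the bound through the exact value $|q_{m+1}^{(n)}|$. I would do this with $|\widetilde{q}_{m+1}^{(n)}|\leq|q_{m+1}^{(n)}|+|\widetilde{\epsilon q}_{m+1}^{(n)}|\leq|q_{m+1}^{(n)}|+\bar{\alpha}_{m+1}^{(n)}$, again invoking (\ref{absolute perturbation definition 2}) and (\ref{absolute perturbation bound q_0}); substituting and regrouping then gives $\gamma_2|q_{m+1}^{(n)}|+(1+\gamma_2)\bar{\alpha}_{m+1}^{(n)}$, i.e.\ (\ref{round error bound from perturbed q}).

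Overall the argument needs no estimate beyond Lemmas~\ref{absolute perturbation of qd_0} and~\ref{rounding error bound from exact inputs qd}; it is two applications of the triangle inequality plus careful bookkeeping of perturbation magnitudes. The only subtlety worth flagging is that in both bounds the factor $(1+\gamma_2)$ arises precisely because each perturbation term is counted once through the rounding estimate and once through the perturbation estimate.
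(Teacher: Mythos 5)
Your proposal is correct and follows essentially the same route as the paper: the same triangle-inequality decomposition into a rounding part (Lemma~\ref{rounding error bound from exact inputs qd}) and a perturbation part (Lemma~\ref{absolute perturbation of qd_0}), with the hatted quantities expanded via (\ref{absolute perturbation definition 1}) to produce the $(1+\gamma_2)$ factor in the $e$-bound, and $|\widetilde{q}_{m+1}^{(n)}|\leq|q_{m+1}^{(n)}|+\bar{\alpha}_{m+1}^{(n)}$ to produce it in the $q$-bound. Your write-up is in fact slightly more explicit than the paper's, which performs the $q$-bound regrouping in a single unstated step.
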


\begin{proof}
First, we have
\begin{equation}\label{round off error e pertur input}
|fl({\widetilde{e}}_{m}^{(n)})-{e}_{m}^{(n)}|\leq |fl({\widetilde{e}}_{m}^{(n)})-\widetilde{e}_{m}^{(n)}|+|\widetilde{e}_{m}^{(n)}-{e}_{m}^{(n)}|.
\end{equation}
Then by (\ref{absolute perturbation bound e_0}) in Lemma \ref{absolute perturbation of qd_0}, (\ref{rounding error bound e}) in Lemma \ref{rounding error bound from exact inputs qd},  and (\ref{absolute perturbation definition 1}), we obtain

\begin{equation*}
\begin{split}
|fl({\widetilde{e}}_{m}^{(n)})-{e}_{m}^{(n)}|&\leq \gamma_2 \big(|{\widehat{q}}_m^{(n+1)}|+|{\widehat{q}}_m^{(n)}|+|{\widehat{e}}_{m-1}^{(n+1)}|\big)+\big(|\epsilon q_m^{(n+1)}|+|\epsilon q_m^{(n)}|+|\epsilon e_{m-1}^{(n+1)}|\big)\\
&\leq \gamma_2\big(|{q}_m^{(n+1)}|+|{q}_m^{(n)}|+|{e}_{m-1}^{(n+1)}|\big)+(1+\gamma_2)\big(|\epsilon q_m^{(n+1)}|+|\epsilon q_m^{(n)}|+|\epsilon e_{m-1}^{(n+1)}|\big).
\end{split}
\end{equation*}
Finally, taking into account (\ref{absolute perturbation bound q_0}) in Lemma \ref{absolute perturbation of qd_0} and (\ref{rounding error bound q}) in Lemma \ref{rounding error bound from exact inputs qd}, we have
\begin{equation*}
\begin{split}
|fl({\widetilde{q}}_{m+1}^{(n)})-{q}_{m+1}^{(n)}|&\leq |fl({\widetilde{q}}_{m+1}^{(n)})-\widetilde{q}_{m+1}^{(n)}|+|\widetilde{q}_{m+1}^{(n)}-{q}_{m+1}^{(n)}|\\
&\leq  \gamma_2|{{q}}_{m+1}^{(n)}|+(1+\gamma_2)\bar{\alpha}_{m+1}^{(n)}.
\end{split}
\end{equation*}
\end{proof}

%----------Roundoff error bounds of qd----------------------
\subsection{Rounding error bounds of {\tt qd} algorithm}\label{subsec3.2}

The previous results give us technical lemmas that allow us to give the global rounding error bounds of the {\tt qd} algorithm by using mathematical induction.

\begin{theorem}\label{rounding error bounds of qd}
The absolute forward rounding error bounds for the {\tt qd} algorithm, in the real coefficients case ($c_i\in\mathbb{R}$), are given by
\begin{equation*}\label{qd err for em}
|fl(\widetilde{e}_m^{(n)})-e_m^{(n)}|\leq\bigg(\prod_{i=0}^{m-1}B_i\bigg)\times\gamma_{4m}|\bar{e}_m^{(n)}|,
\end{equation*}
and
\begin{equation*}\label{qd err for q(m+1)}
|fl(\widetilde{q}_{m+1}^{(n)})-q_{m+1}^{(n)}|\leq\bigg(\prod_{i=0}^{m}B_i\bigg)\times\gamma_{4m+2}|\bar{q}_{m+1}^{(n)}|,
\end{equation*}
where
\begin{equation}\label{biqd}
B_m=\max_n\{b_m^{(n)*}\}, \qquad b_m^{(n)*}=\max\{b_m^{(n)},1\},
\end{equation}
with $b_m^{(n)}$ defined in (\ref{bmn}), and
\begin{equation}\label{abs e}
\bar{e}_m^{(n)}=|\bar{q}_m^{(n+1)}|+|\bar{q}_m^{(n)}|+|\bar{e}_{m-1}^{(n+1)}|,
\end{equation}
\begin{equation}\label{abs q}
\bar{q}_{m+1}^{(n)}=\bigg(\frac{|\bar{e}_m^{(n+1)}|}{|e_m^{(n+1)}|}+\frac{|\bar{q}_m^{(n+1)}|}{|q_m^{(n+1)}|}+\frac{|\bar{e}_m^{(n)}|}{|e_m^{(n)}|}\bigg)|q_{m+1}^{(n)}|,
\end{equation}
supposing
$\prod\limits_{i=1}^{m}B_i\ll\frac{1}{u}$, and where the initial values are given by $b^{(n)}_0=1, \, \bar{q}_1^{(n)}=q_1^{(n)}, \,  \bar{e}_0^{(n)}=0$.
\end{theorem}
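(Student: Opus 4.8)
The plan is to prove both bounds simultaneously by induction on $m$, ordering the two statements exactly as Algorithm \ref{Algor qd} produces them: at each level $m$ I first establish the bound for $e_m^{(n)}$ and then use it to establish the bound for $q_{m+1}^{(n)}$. The key observation that makes the induction go through is that the input perturbations appearing in Lemma \ref{rounding error bound from perturbed inputs qd} are precisely the accumulated forward errors of the previous level, i.e. $\epsilon e_m^{(n)}=fl(\widetilde e_m^{(n)})-e_m^{(n)}$ and $\epsilon q_m^{(n)}=fl(\widetilde q_m^{(n)})-q_m^{(n)}$; thus the inductive hypotheses feed directly into (\ref{round error bound from perturbed e}) and (\ref{round error bound from perturbed q}). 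Before the main argument I would record, by a parallel induction, two monotonicity facts: $B_i\geq 1$ for all $i$ (immediate from $b_i^{(n)*}=\max\{b_i^{(n)},1\}$ in (\ref{biqd})), and $|q_m^{(n)}|\leq|\bar q_m^{(n)}|$, $|e_m^{(n)}|\leq|\bar e_m^{(n)}|$ for all $m,n$; the latter hold because (\ref{abs q}) contains the factor $|\bar q_m^{(n+1)}|/|q_m^{(n+1)}|\geq 1$ and (\ref{abs e}) dominates $|q_m^{(n+1)}|+|q_m^{(n)}|+|e_{m-1}^{(n+1)}|\geq|e_m^{(n)}|$. The base cases are $e_0^{(n)}=0$ (an exact input, matching the empty product and $\gamma_0$) and $q_1^{(n)}=fl(c_{n+1}/c_n)$ (a single division, so error $\leq u|q_1^{(n)}|\leq\gamma_2|\bar q_1^{(n)}|$ with $B_0=1$).

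For the inductive step on $e_m^{(n)}$ I start from (\ref{round error bound from perturbed e}), bound the first term $\gamma_2(|q_m^{(n+1)}|+|q_m^{(n)}|+|e_{m-1}^{(n+1)}|)$ by $\gamma_2|\bar e_m^{(n)}|$ via the monotonicity facts, and substitute the hypotheses for $q_m$ (at level $m-1$) and $e_{m-1}$ into the perturbation terms. Using $B_{m-1}\geq 1$ to lift $\prod_{i=0}^{m-2}B_i$ to $\prod_{i=0}^{m-1}B_i$ and $\gamma$-monotonicity to lift $\gamma_{4m-4}$ to $\gamma_{4m-2}$, the three perturbation terms collapse to $(1+\gamma_2)(\prod_{i=0}^{m-1}B_i)\gamma_{4m-2}|\bar e_m^{(n)}|$, giving an overall bound $(\prod_{i=0}^{m-1}B_i)[\gamma_2+(1+\gamma_2)\gamma_{4m-2}]|\bar e_m^{(n)}|$. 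The identity $\gamma_2+(1+\gamma_2)\gamma_{4m-2}=\gamma_2+\gamma_{4m-2}+\gamma_2\gamma_{4m-2}\leq\gamma_{4m}$, which is exactly the third listed $\gamma$-property, closes this half.

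For the step on $q_{m+1}^{(n)}$ I start from (\ref{round error bound from perturbed q}) and expand $\bar\alpha_{m+1}^{(n)}$ from (\ref{abs alpha}), inserting the just-proved bound for $e_m^{(n+1)},e_m^{(n)}$ and the level-$(m-1)$ bound for $q_m^{(n+1)}$. The crucial algebraic step — which I expect to be the main obstacle — is to recognise that, after dividing the numerator of $\bar\alpha_{m+1}^{(n)}$ by $|e_m^{(n)}|$, the exact product rule $q_{m+1}^{(n)}=(e_m^{(n+1)}/e_m^{(n)})\,q_m^{(n+1)}$ of (\ref{theoretical q qd}) turns $|q_m^{(n+1)}||\bar e_m^{(n+1)}|/|e_m^{(n)}|$, $|e_m^{(n+1)}||\bar q_m^{(n+1)}|/|e_m^{(n)}|$ and $|q_{m+1}^{(n)}||\bar e_m^{(n)}|/|e_m^{(n)}|$ into exactly the three terms $|q_{m+1}^{(n)}||\bar e_m^{(n+1)}|/|e_m^{(n+1)}|$, $|q_{m+1}^{(n)}||\bar q_m^{(n+1)}|/|q_m^{(n+1)}|$ and $|q_{m+1}^{(n)}||\bar e_m^{(n)}|/|e_m^{(n)}|$ that constitute $\bar q_{m+1}^{(n)}$ in (\ref{abs q}). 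Bounding the mixed subscripts $\gamma_{4m-2},\gamma_{4m}$ by $\gamma_{4m}$ and using $b_m^{(n)}\prod_{i=0}^{m-1}B_i\leq B_m\prod_{i=0}^{m-1}B_i=\prod_{i=0}^{m}B_i$, these terms yield $(\prod_{i=0}^m B_i)\gamma_{4m}|\bar q_{m+1}^{(n)}|$; the remaining mixed term $|\epsilon q_m^{(n+1)}||\epsilon e_m^{(n+1)}|/|e_m^{(n)}|$ is $\mathcal{O}(u^2)$ and is absorbed under the standing hypothesis $\prod_{i=1}^m B_i\ll 1/u$. Finally, combining with $\gamma_2|q_{m+1}^{(n)}|\leq\gamma_2|\bar q_{m+1}^{(n)}|$ and applying $\gamma_2+(1+\gamma_2)\gamma_{4m}\leq\gamma_{4m+2}$ once more gives the stated bound and completes the induction. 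The delicate bookkeeping is thus twofold: the reassembly of the perturbation terms into the condition-number quantities $\bar e_m^{(n)},\bar q_{m+1}^{(n)}$ through the product rule, and the careful tracking of the $\gamma$-subscripts so that they telescope to $4m$ and $4m+2$ while the $b_m^{(n)}$ factors accumulate into $\prod_i B_i$.
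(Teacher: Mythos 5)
Your overall strategy coincides with the paper's proof: induction on $m$ through Lemma \ref{rounding error bound from perturbed inputs qd}, identifying the input perturbations $\epsilon e_m^{(n)}, \epsilon q_m^{(n)}$ with the accumulated forward errors of the previous level, the preliminary monotonicity facts $B_i\geq 1$, $|e_m^{(n)}|\leq|\bar e_m^{(n)}|$, $|q_{m+1}^{(n)}|\leq|\bar q_{m+1}^{(n)}|$, the reassembly of $\bar\alpha_{m+1}^{(n)}$ into $\bar q_{m+1}^{(n)}$ via the product rhombus rule (\ref{product q and e}), the absorption of the quadratic perturbation term under $\prod_i B_i\ll\frac{1}{u}$, and the closing estimate $\gamma_2+(1+\gamma_2)\gamma_j\leq\gamma_{j+2}$. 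All of this matches Step 4 of the paper's proof.

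However, your base case is wrong for the case the theorem actually covers, $c_i\in\mathbb{R}$, and the error is not cosmetic: your claimed initialization error is exactly what makes your subscripts come out right. Since the coefficients are real, they must themselves be rounded before the division, so $fl(\widetilde q_1^{(n)})=fl(fl(c_{n+1})/fl(c_n))=q_1^{(n)}(1+\theta_3)$, giving $|fl(\widetilde q_1^{(n)})-q_1^{(n)}|\leq\gamma_3|\bar q_1^{(n)}|$, not $u|q_1^{(n)}|$; the ``single division'' bound is valid only when $c_i\in\mathbb{F}$, a case the paper handles separately (cf. (\ref{floating inputs of compqd})), and indeed even the paper's own proof obtains $\gamma_3$ rather than the $\gamma_2$ that the theorem's formula would suggest at $m=0$. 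With the correct $\gamma_3$, your generic inductive step at $m=1$ produces $\gamma_2+(1+\gamma_2)\gamma_3\leq\gamma_5$, which exceeds the claimed $\gamma_4$, and this overshoot propagates through the induction so that you end with $\gamma_{4m+1}$ and $\gamma_{4m+3}$ instead of $\gamma_{4m}$ and $\gamma_{4m+2}$. The paper closes this gap with one extra observation that your proposal omits: at $m=1$ the inner loop performs only one genuine rounding, because $e_0^{(n+1)}=0$ and adding zero is exact, so the operation error in (\ref{round error bound from perturbed e}) can be taken as $\gamma_1$ rather than $\gamma_2$; this yields $\gamma_1+\gamma_3(1+\gamma_1)\leq\gamma_4$, which is the paper's bound (\ref{qd err for e1}), and from $m=2$ onward the generic step you describe carries the induction through. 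So you need (i) the corrected initialization $\gamma_3$, and (ii) a special treatment of $m=1$ using the exact-zero argument; with those two repairs your argument becomes the paper's proof.
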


\begin{proof}
It is easy to see that $B_m\geq 1$, $B_m\geq b_m^{(n)}$, $|e_m^{(n)}|\leq|\bar{e}_m^{(n)}|$ and $|q_{m+1}^{(n)}|\leq|\bar{q}_{m+1}^{(n)}|$ for $\forall m,n\in \mathbb{N}$ in (\ref{abs e}) and (\ref{abs q}).

\textbf{Step 1}: When $m=0$, there is just one floating-point division step, i.e. $q_1^{(n)}=c_{n+1}/c_n$. As $c_i\in\mathbb{R}$ for $i=0,1,2,\ldots$,  then $fl(\widetilde{q}_1^{(n)})=fl(fl(c_{n+1})/fl(c_n))=q_1^{(n)}(1+\theta_3)$. Hence we have
\begin{equation}\label{qd err for q1}
|fl(\widetilde{q}_1^{(n)})-q_1^{(n)}|\leq\gamma_3|\bar{q}_1^{(n)}|.
\end{equation}
Here, $fl(\widetilde{q}_1^{(n)})=\widehat{q}_1^{(n)}$ will be the input for computing $fl(\widetilde{e}_1^{(n)})$ and $fl(\widetilde{q}_2^{(n)})$.

\textbf{Step 2}: For $m=1$, we consider the rounding error bounds of $fl(\widetilde{e}_1^{(n)})$ and $fl(\widetilde{q}_2^{(n)})$. As $e_0^{(n)}=0$, we have
\begin{equation}\label{qd err for e1}
|fl(\widetilde{e}_1^{(n)})-e_1^{(n)}|\leq \big(\gamma_1+\gamma_3(1+\gamma_1)\big)|\bar{e}_1^{(n)}|\leq\gamma_4|\bar{e}_1^{(n)}|,
\end{equation}
where $\bar{e}_1^{(n)}=|\bar{q}_1^{(n+1)}|+|\bar{q}_1^{(n)}|$. The output $fl(\widetilde{e}_1^{(n)})$ will be the input $\widehat{e}_1^{(n)}$ for computing $fl(\widetilde{q}_2^{(n)})$ and $fl(\widetilde{e}_2^{(n)})$.

Considering
 $b^{(n)}_1\leq B_1$ and $\displaystyle{\bar{q}_2^{(n)}=\bigg(\frac{|\bar{e}_1^{(n+1)}|}{|e_1^{(n+1)}|}+1+\frac{|\bar{e}_1^{(n)}|}{|e_1^{(n)}|}\bigg)|q_2^{(n)}|}$, from (\ref{product q and e}), (\ref{absolute perturbation definition 1}), (\ref{abs alpha}), (\ref{qd err for q1}) and (\ref{qd err for e1}), we have
\begin{equation}\label{per err for q2}
\begin{split}
|\bar{\alpha}_2^{(n)}|&\leq {b^{(n)}_1}\times\bigg(\gamma_4\frac{|\bar{e}_1^{(n+1)}|}{|e_1^{(n+1)}|}|q_2^{(n)}|
+\gamma_3|q_2^{(n)}|+\gamma_4\frac{|\bar{e}_1^{(n)}|}{|e_1^{(n)}|}|q_2^{(n)}|+\gamma_3\gamma_4
\frac{|\bar{e}_1^{(n+1)}|}{|e_1^{(n+1)}|}|q_2^{(n)}|\bigg)\\
&\leq{B_1}\gamma_4|\bar{q}_2^{(n)}|.
\end{split}
\end{equation}
Hence, from (\ref{round error bound from perturbed q}) in Lemma~\ref{rounding error bound from perturbed inputs qd} and (\ref{per err for q2}), we derive
\begin{equation}\label{qd err for q2}
\begin{split}
|fl(\widetilde{q}_2^{(n)})-q_2^{(n)}|&\leq \bigg\{\gamma_2+(1+\gamma_2){B_1}\gamma_4\bigg(\frac{|\bar{e}_1^{(n+1)}|}{|e_1^{(n+1)}|}+1+\frac{|\bar{e}_1^{(n)}|}{|e_1^{(n)}|}\bigg)\bigg\}|q_2^{(n)}|\\
&\leq \big(\gamma_2+{B_1}\gamma_4(1+\gamma_2)\big)|\bar{q}_2^{(n)}|\\
&\leq{B_1}\gamma_6|\bar{q}_2^{(n)}|.
\end{split}
\end{equation}
The output $fl(\widetilde{q}_2^{(n)})$ will also be the input $\widehat{q}_2^{(n)}$ for computing $fl(\widetilde{e}_2^{(n)})$ and $fl(\widetilde{q}_3^{(n)})$.

\textbf{Step 3}: For $m=2$, we consider the rounding error bounds of $fl(\widetilde{e}_2^{(n)})$ and $fl(\widetilde{q}_3^{(n)})$. According to (\ref{round error bound from perturbed e}) in Lemma \ref{rounding error bound from perturbed inputs qd}, using  (\ref{abs e}), (\ref{qd err for e1}),  and (\ref{qd err for q2}), we have
\begin{equation}\label{qd err for e2}
\begin{split}
|fl(\widetilde{e}_2^{(n)})-e_2^{(n)}|&\leq\gamma_2\big(|q_2^{(n+1)}|+|q_2^{(n)}|+|e_1^{(n+1)}|\big)+(1+\gamma_2){B_1}\gamma_6|\bar{e}_2^{(n)}|\\
&\leq \big(\gamma_2+(1+\gamma_2){B_1}\gamma_6 \big)|\bar{e}_2^{(n)} |\\
&\leq{B_1}\gamma_8|\bar{e}_2^{(n)}|,
\end{split}
\end{equation}
where $\bar{e}_2^{(n)}=|\bar{q}_2^{(n+1)}|+|\bar{q}_2^{(n)}|+|\bar{e}_1^{(n+1)}|$. The output $fl(\widetilde{e}_2^{(n)})$ will be the input $\widehat{e}_2^{(n)}$ for computing $fl(\widetilde{q}_3^{(n)})$ and $fl(\widetilde{e}_3^{(n)})$.

Using (\ref{product q and e}), (\ref{abs alpha}), (\ref{abs q}), (\ref{qd err for q2}) and (\ref{qd err for e2}), and considering $B_1\ll\frac{1}{u}$ and $B_2\geq b_2^{(n)}$, we have
\begin{equation}\label{per err for q3}
\begin{split}
|\bar{\alpha}_3^{(n)}|&\leq{b_2^{(n)}}\times\bigg({B_1}\gamma_8\frac{|\bar{e}_2^{(n+1)}|}{|e_2^{(n+1)}|}
|q_3^{(n)}|+{B_1}\gamma_6\frac{|\bar{q}_2^{(n+1)}|}{|q_2^{(n+1)}|}|q_3^{(n)}|+{B_1}\gamma_8
\frac{|\bar{e}_2^{(n)}|}{|e_2^{(n)}|}|q_3^{(n)}|\\
&+({B_1})^2\gamma_6\gamma_8
\frac{|\bar{q}_2^{(n+1)}|}{|q_2^{(n+1)}|}\frac{|\bar{e}_1^{(n+1)}|}{|e_1^{(n+1)}|}|q_3^{(n)}|\bigg)\leq{B_1}{B_2}\gamma_8|\bar{q}_3^{(n)}|,
\end{split}
\end{equation}
Hence, from (\ref{round error bound from perturbed q}) in Lemma \ref{rounding error bound from perturbed inputs qd} and (\ref{per err for q3}), we derive
\begin{equation*}\label{qd err for q3}
\begin{split}
|fl(\widetilde{q}_3^{(n)})-q_3^{(n)}|&\leq \bigg\{\gamma_2+(1+\gamma_2){B_1B_2}\gamma_8 \bigg(\frac{|\bar{e}_2^{(n+1)}|}{|e_2^{(n+1)}|}+\frac{|\bar{q}_2^{(n+1)}|}{|q_2^{(n+1)}|}+\frac{|\bar{e}_2^{(n)}|}{|e_2^{(n)}|}\bigg)\bigg\}|q_3^{(n)}|\\
&\leq \big(\gamma_2+{B_1B_2}\gamma_8(1+\gamma_2)\big)|\bar{q}_3^{(n)}|\\
&\leq{B_1B_2}\gamma_{10}|\bar{q}_3^{(n)}|.
\end{split}
\end{equation*}
Then, we have found the regular pattern of the rounding error bounds.

\textbf{Step 4}: Now, for a generic $k \in \mathbb{N}$, we assume that when $m=k$ the absolute forward rounding error bounds of {\tt qd} algorithm are satisfied
\begin{equation}\label{qd err for em22}
|fl(\widetilde{e}_k^{(n)})-e_k^{(n)}|\leq\bigg({\prod_{i=0}^{k-1}B_i}\bigg)\gamma_{4k}|\bar{e}_k^{(n)}|,
\end{equation}
and
\begin{equation}\label{qd err for qm+1}
|fl(\widetilde{q}_{k+1}^{(n)})-q_{k+1}^{(n)}|\leq\bigg({\prod_{i=0}^{k}B_i}\bigg)\gamma_{4k+2}|\bar{q}_{k+1}^{(n)}|.
\end{equation}

In a similar way, for $m=k+1$, considering the rounding error bound of $fl(\widetilde{e}_{k+1}^{(n)})$ with inputs $fl(\widetilde{e}_k^{(n)})$ and $fl(\widetilde{q}_{k+1}^{(n)})$ in (\ref{qd err for em22}) and (\ref{qd err for qm+1}), from (\ref{abs e}) and (\ref{round error bound from perturbed e}) in Lemma \ref{rounding error bound from perturbed inputs qd}, we can derive that
\begin{equation}\label{qd err for em+1}
\begin{split}
|fl(\widetilde{e}_{k+1}^{(n)})-e_{k+1}^{(n)}|&\leq\gamma_2\big(|q_{k+1}^{(n+1)}|+|q_{k+1}^{(n)}|+|e_k^{(n+1)}|\big)+(1+\gamma_2)\bigg({\prod_{i=0}^{k}B_i}\bigg)\gamma_{4k+2}|\bar{e}_{k+1}^{(n)}|\\
&\leq \bigg\{\gamma_2+(1+\gamma_2)\bigg({\prod_{i=0}^{k}B_i}\bigg)\gamma_{4k+2} \bigg\}|\bar{e}_{k+1}^{(n)} |\\
&\leq\bigg({\prod_{i=0}^{k}B_i}\bigg)\gamma_{4(k+1)}|\bar{e}_{k+1}^{(n)}|.
\end{split}
\end{equation}
The output $fl(\widetilde{e}_{k+1}^{(n)})$ will be the input $\widehat{e}_{k+1}^{(n)}$ for computing $fl(\widetilde{q}_{k+2}^{(n)})$.

Next, considering the rounding error bound of $fl(\widetilde{q}_{k+2}^{(n)})$ with inputs $fl(\widetilde{q}_{k+1}^{(n)})$ and $fl(\widetilde{e}_{k+1}^{(n)})$ in (\ref{qd err for qm+1}) and (\ref{qd err for em+1}), from (\ref{product q and e}), (\ref{abs alpha}) and (\ref{abs q}), with $b_{k+1}^{(n)}\leq B_{k+1}$, we have
\begin{equation}\label{per err for qm+2}
\begin{split}
|\bar{\alpha}_{k+2}^{(n)}|&\leq{b_{k+1}^{(n)}}\times\bigg\{\bigg({\prod_{i=0}^{k}B_i}\bigg)\gamma_{4(k+1)}\frac{|\bar{e}_{k+1}^{(n+1)}|}{|e_{k+1}^{(n+1)}|}|q_{k+2}^{(n)}|+\bigg({\prod_{i=0}^{k}B_i}\bigg)\gamma_{4k+2}\frac{|\bar{q}_{k+1}^{(n+1)}|}{|q_{k+1}^{(n+1)}|}|q_{k+2}^{(n)}|\\
&+\bigg({\prod_{i=0}^{k}B_i}\bigg)\gamma_{4(k+1)}\frac{|\bar{e}_{k+1}^{(n)}|}{|e_{k+1}^{(n)}|}|q_{k+2}^{(n)}|+\bigg({\prod_{i=0}^{k}B_i}\bigg)^2\gamma_{4k+2}\gamma_{4(k+1)}\frac{|\bar{q}_{k+1}^{(n+1)}|}{|q_{k+1}^{(n+1)}|}\frac{|\bar{e}_{k}^{(n+1)}|}{|e_{k}^{(n+1)}|}|q_{k+2}^{(n)}|\bigg\}\\
&\leq\bigg({\prod_{i=0}^{k+1}B_i}\bigg)\gamma_{4(k+1)}|\bar{q}_{k+2}^{(n)}|.
\end{split}
\end{equation}
Hence, from (\ref{round error bound from perturbed q}) in Lemma \ref{rounding error bound from perturbed inputs qd} and (\ref{per err for qm+2}), with $1 \leq B_{i}$, we derive
\begin{equation*}\label{qd err for qm+2}
\begin{split}
|fl(\widetilde{q}_{k+2}^{(n)})-q_{k+2}^{(n)}|&\leq \bigg\{\gamma_2+(1+\gamma_2)\bigg({\prod_{i=0}^{k+1}B_i}\bigg)\gamma_{4(k+1)} \bigg(\frac{|\bar{e}_{k+1}^{(n+1)}|}{|e_{k+1}^{(n+1)}|}+\frac{|\bar{q}_{k+1}^{(n+1)}|}{|q_{k+1}^{(n+1)}|}+\frac{|\bar{e}_{k+1}^{(n)}|}{|e_{k+1}^{(n)}|}\bigg)\bigg\}|q_{k+2}^{(n)}|\\
&\leq \bigg\{\gamma_2+\bigg({\prod_{i=0}^{k+1}B_i}\bigg)\gamma_{4(k+1)}(1+\gamma_2)\bigg\}|\bar{q}_{k+2}^{(n)}|\\
&\leq\bigg({\prod_{i=0}^{k+1}B_i}\bigg)\gamma_{4(k+1)+2}|\bar{q}_{k+2}^{(n)}|.
\end{split}
\end{equation*}
And therefore, by induction we obtain the result.
\end{proof}

In order to simplify all the analysis, we define new condition numbers for evaluating each $e_m^{(n)}$ and $q_{m+1}^{(n)}$ using the {\tt qd} algorithm.

\begin{definition}\label{condition number}
The \emph{condition numbers} for evaluating the terms $e_m^{n}$ and $q_{m+1}^{(n)}$ using the {\tt qd} algorithm are defined by
$${\tt cond\_e_m^{(n)}}=\frac{\bar{e}_m^{(n)}}{|e_m^{(n)}|},$$
and
$${\tt cond\_q_{m+1}^{(n)}}=\frac{\bar{q}_{m+1}^{(n)}}{|q_{m+1}^{(n)}|},$$
where $\bar{e}_m^{(n)}$ and $\bar{q}_{m+1}^{(n)}$ are defined in (\ref{abs e}) and (\ref{abs q}), respectively,
and where $\bar{q}_1^{(n)}=q_1^{(n)}, \,  \bar{e}_0^{(n)}=e_0^{(n)}=0$.
\end{definition}

\begin{figure}[h!]
\begin{center}
\includegraphics[width=0.5\textwidth]{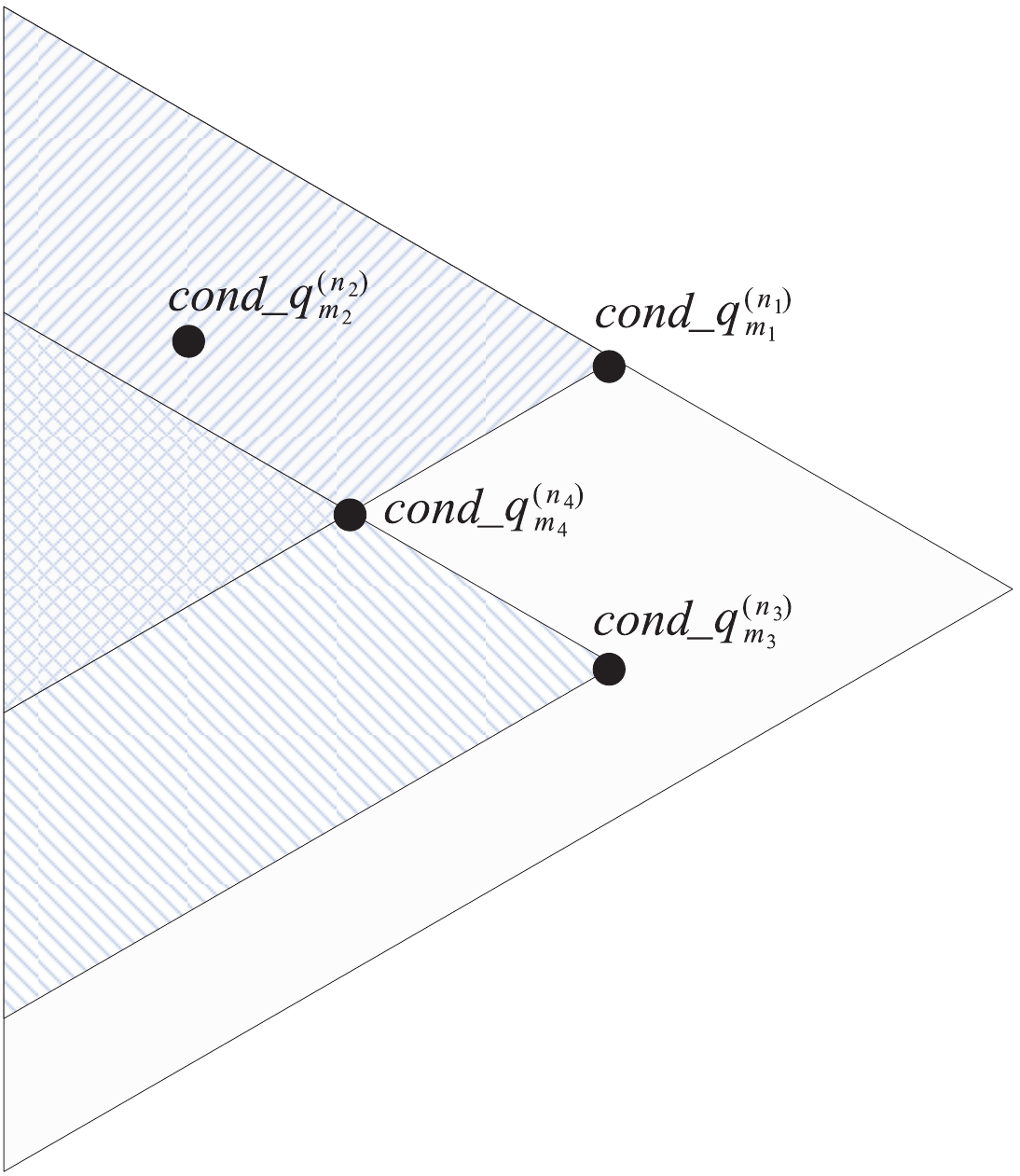}
\end{center}
\caption{Organization of the condition numbers in the {\tt qd} table.} \label{condqdtable}
\end{figure}

Therefore, each element in one {\tt qd} table in floating-point arithmetic has its own condition number. The relationship between two different elements is shown
in  Figure \ref{condqdtable}.  By Definition \ref{condition number}, from (\ref{abs q}) we have
$$\frac{\bar{q}_{m+1}^{(n)}}{|q_{m+1}^{(n)}|}\geq\frac{\bar{q}_m^{(n+1)}}{|q_m^{(n+1)}|}.$$
Hence, it is easy to see that
${\tt cond\_q_{{m_1}}^{({n_1})}}$ is larger than any element (e.g. ${\tt cond\_q_{{m_2}}^{({n_2})}}$) on its left part of the triangle, which corresponds to the terms of the {\tt qd} table's triangle that generates $q_{{m_1}}^{({n_1})}$.
It should be noticed that even though ${\tt cond\_q_{{m_1}}^{({n_1})}}\geq {\tt cond\_q_{{m_4}}^{({n_4})}}$ and ${\tt cond\_q_{{m_3}}^{({n_3})}}\geq {\tt cond\_q_{{m_4}}^{({n_4})}}$,
we can not say which one is larger between ${\tt cond\_q_{{m_1}}^{({n_1})}}$ and ${\tt cond\_q_{{m_3}}^{({n_3})}}$.
Next, we consider the condition number  ${\tt cond\_e_m^{(n)}}$.
From~(\ref{abs q}), we have
$$\frac{\bar{q}_{m}^{(n+1)}}{|q_{m}^{(n+1)}|}\geq\frac{\bar{e}_{m-1}^{(n+1)}}{|e_{m-1}^{(n+1)}|},  \qquad \frac{\bar{q}_{m}^{(n)}}{|q_{m}^{(n)}|}\geq\frac{\bar{e}_{m-1}^{(n+1)}}{|e_{m-1}^{(n+1)}|}.$$
Then, by (\ref{abs e}), we have
$$\frac{\bar{e}_m^{(n)}}{|e_m^{(n)}|}=\frac{\bar{q}_m^{(n+1)}+\bar{q}_m^{(n)}+\bar{e}_{m-1}^{(n+1)}}{|e_m^{(n)}|}=\frac{\bar{e}_{m-1}^{(n+1)}}{|e_{m-1}^{(n+1)}|}\times \frac{|{q}_m^{(n+1)}|+|{q}_m^{(n)}|+|{e}_{m-1}^{(n+1)}|}{|e_m^{(n)}|} \geq \frac{\bar{e}_{m-1}^{(n+1)}}{|e_{m-1}^{(n+1)}|}.$$
That is, the condition number  ${\tt cond\_e_m^{(n)}}$ has the same relationship as that of ${\tt cond\_q_{m+1}^{(n)}}$.

Using Theorem~\ref{rounding error bounds of qd} and the condition numbers given in Definition~\ref{condition number}, we can write the relative forward rounding error bounds of the {\tt qd} algorithm with perturbed inputs in a direct way:
\begin{corollary}\label{relative error bounds of qd}
The relative forward rounding error bounds for the {\tt qd} algorithm, in the real coefficients case ($c_i\in\mathbb{R}$), are given by
\begin{equation*}\label{qd err bound e}
\frac{|fl(\widetilde{e}_m^{(n)})-e_m^{(n)}|}{|e_m^{(n)}|}\leq{\Phi}_{m-1} \, \gamma_{4m} \, {\tt cond\_e_m^{(n)}} \equiv {\Phi}_{m-1} \, \mathcal{O}(u) \, {\tt cond\_e_m^{(n)}},
\end{equation*}
and
\begin{equation*}\label{qd err bound q}
\frac{|fl(\widetilde{q}_{m+1}^{(n)})-q_{m+1}^{(n)}|}{|q_{m+1}^{(n)}|}\leq{\Phi}_m \, \gamma_{4m+2} \, {\tt cond\_q_{m+1}^{(n)}} \equiv
{\Phi}_m \, \mathcal{O}(u) \, {\tt cond\_q_{m+1}^{(n)}},
\end{equation*}
supposing $\Phi_m={\prod\limits_{i=0}^{m}B_i} \ll\frac{1}{u}$ and where $B_i$ is defined in (\ref{biqd}).\end{corollary}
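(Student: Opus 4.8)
The plan is to derive the corollary directly from Theorem~\ref{rounding error bounds of qd} by normalizing the absolute error bounds to relative form and recognizing the condition numbers of Definition~\ref{condition number}. Since this is a reformulation rather than a new estimate, no fresh induction or bounding is needed; the argument is purely algebraic.

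First I would recall the two absolute forward rounding error bounds established in Theorem~\ref{rounding error bounds of qd}, namely
\[
|fl(\widetilde{e}_m^{(n)})-e_m^{(n)}|\leq\Big(\prod_{i=0}^{m-1}B_i\Big)\gamma_{4m}|\bar{e}_m^{(n)}|
\quad\text{and}\quad
|fl(\widetilde{q}_{m+1}^{(n)})-q_{m+1}^{(n)}|\leq\Big(\prod_{i=0}^{m}B_i\Big)\gamma_{4m+2}|\bar{q}_{m+1}^{(n)}|.
\]
Dividing the first inequality by $|e_m^{(n)}|$ and the second by $|q_{m+1}^{(n)}|$, the factors $|\bar{e}_m^{(n)}|/|e_m^{(n)}|$ and $|\bar{q}_{m+1}^{(n)}|/|q_{m+1}^{(n)}|$ are precisely ${\tt cond\_e_m^{(n)}}$ and ${\tt cond\_q_{m+1}^{(n)}}$ by Definition~\ref{condition number}.

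Next I would replace the products of the $B_i$ by the compact notation $\Phi_{m-1}=\prod_{i=0}^{m-1}B_i$ and $\Phi_m=\prod_{i=0}^{m}B_i$, which yields the stated bounds with the $\gamma$-factors intact. To obtain the $\mathcal{O}(u)$ form on the right, I would invoke Definition~\ref{def1}, where $\gamma_n=nu/(1-nu)=nu+\mathcal{O}(u^2)$; under the standing assumption $\Phi_m\ll 1/u$ the products $\Phi_{m-1}\gamma_{4m}$ and $\Phi_m\gamma_{4m+2}$ remain well defined, and since $\gamma_{4m}$ and $\gamma_{4m+2}$ are each $\mathcal{O}(u)$ this gives the claimed equivalences.

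There is essentially no hard step here: the corollary is a direct restatement of Theorem~\ref{rounding error bounds of qd} in relative form via the condition numbers of Definition~\ref{condition number}. The only point requiring a word of care is the nonvanishing of the denominators $e_m^{(n)}$ and $q_{m+1}^{(n)}$, which is already guaranteed by the existence hypotheses for the {\tt qd} scheme and by the assumption $e_m^{(n)}\neq 0$ imposed in Lemma~\ref{absolute perturbation of qd_0}.
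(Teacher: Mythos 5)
Your proposal is correct and matches the paper's own (implicit) argument: the corollary is obtained exactly as you describe, by dividing the absolute bounds of Theorem~\ref{rounding error bounds of qd} by $|e_m^{(n)}|$ and $|q_{m+1}^{(n)}|$, recognizing the condition numbers of Definition~\ref{condition number}, abbreviating the products of the $B_i$ as $\Phi_{m-1}$ and $\Phi_m$, and noting $\gamma_{4m},\gamma_{4m+2}=\mathcal{O}(u)$. Your remark on the nonvanishing denominators is a harmless (and sensible) addition that the paper leaves to the standing hypotheses on the {\tt qd} scheme.
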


From Corollary \ref{relative error bounds of qd}, we can observe that ${\Phi}_{m-1}\leq {\Phi}_{m}$. Corollary~\ref{relative error bounds of qd} gives the theoretical analysis of the classic {\tt qd} algorithm. Now, our objective is to improve the error bounds by giving a new more stable algorithm.

%-%-----------------------------------------------------------------------compqd-------------------------------------------------------------------------------------%%

\section{Compensated {\tt qd} algorithm}\label{sec 4 comp}

In this section, we deduce the new compensated {\tt qd} algorithm.

Firstly, in order to consider the perturbations of the approximate inputs of the exact value $q_1^{(n)}=\frac{c_{n+1}}{c_n}$ in the {\tt qd} algorithm, we split each coefficient in the formal power series (\ref{FPS}), which is a real number, into three parts:
\begin{equation}\label{coefficients 3 part}
c_n=c^{(h)}_n+c^{(l)}_n+c^{(m)}_n,
\end{equation}
where $c_n,c^{(m)}_n\in\mathbb{R}$, $c^{(h)}_n,c^{(l)}_n\in\mathbb{F}$ and $|c^{(l)}_n|\leq{u}|c^{(h)}_n|$, $c^{(m)}_n$ is the remaining mantissa.
Referring to Table~\ref{notations}, we deem that using a double-double \cite{Bailey} number $(\widehat{q}_1^{(n)},-\widehat{\epsilon q}_1^{(n)})$ to approximate $q_1^{(n)}$, we can obtain more accurate initial values than $\widehat{q}_1^{(n)}=fl({c_{n+1}}/{c_n})$. Based on
\begin{equation*}\label{inputs approx}
\widehat{q}_1^{(n)}-\widehat{\epsilon q}_1^{(n)}\approx\frac{c^{(h)}_{n+1}+c^{(l)}_{n+1}}{c^{(h)}_n+c^{(l)}_n},
\end{equation*}
we utilize the double-double division arithmetic (Algorithm \ref{Div_dd_dd} in Appendix A) to get $(\widehat{q}_1^{(n)},-\widehat{\epsilon q}_1^{(n)})$.
Then, by using $u^2$ instead of $u$ (the approximate rounding unit in double-double arithmetic \cite{Bailey}) in Definition~\ref{def1}, from $\frac{3u^2}{1-3u^2}\leq\gamma_2^2$, we have
\begin{equation}\label{inputs}
|\widehat{q}_1^{(n)}-\widehat{\epsilon q}_1^{(n)}-q_1^{(n)}|\leq\gamma_2^2|q_1^{(n)}|,
\end{equation}
in double-double arithmetic.

Secondly, we deduce the compensated terms of outputs in each inner loop (\ref{inner loop part}) of the {\tt qd} algorithm.
In the inner loop of the {\tt qd} algorithm, the computations in floating-point arithmetic are present in (\ref{floating eq qd}).
By using EFTs, we can take into account the rounding errors generated on each operation and compensate them back to the original computed results to improve their accuracy:
\begin{equation}\label{EFT sum compqd}
[s,\mu_1]={\tt TwoSum}(\widehat{q}_m^{(n+1)},-\widehat{q}_m^{(n)}), \indent
[\widehat{e}_m^{(n)},\mu_2]={\tt TwoSum}(s,\widehat{e}_{m-1}^{(n+1)}),
\end{equation}
and
\begin{equation}\label{EFT pro compqd}
[t,\mu_3]={\tt DivRem}(\widehat{e}_{m}^{(n+1)},\widehat{e}_{m}^{(n)}),\indent
[\widehat{q}_{m+1}^{(n)},\mu_4]={\tt TwoProd}(t,\widehat{q}_{m}^{(n+1)}).
\end{equation}
By Theorem \ref{EFT dingli} and Theorem \ref{EFT div dingli}, we have
 \begin{equation}\label{firt term qd}
\begin{split}
s+\mu_1&=\widehat{q}_m^{(n+1)}-\widehat{q}_m^{(n)},\\
\widehat{e}_m^{(n)}+\mu_2&=s+\widehat{e}_{m-1}^{(n+1)},
\end{split}
\end{equation}
and
 \begin{equation}\label{second term qd}
\begin{split}
t\times\widehat{e}_{m}^{(n)}+\mu_3&=\widehat{e}_{m}^{(n+1)},\\
\widehat{q}_{m+1}^{(n)}+\mu_4&=t\times\widehat{q}_{m}^{(n+1)}.
\end{split}
\end{equation}
Computing $\widehat{e}_m^{(n)}$ with the perturbed inputs, from (\ref{add q and e}), (\ref{absolute perturbation definition 1}) and (\ref{firt term qd}), we can easily obtain the compensated term of $\widehat{e}_m^{(n)}$, given by
 \begin{equation}\label{step1 compqd}
\epsilon e_m^{(n)}=\epsilon q_m^{(n+1)}-\epsilon q_m^{(n)}+\epsilon e_{m-1}^{(n+1)}-\mu_1-\mu_2.
\end{equation}
Therefore, we can obtain the approximate compensated term of $\widehat{e}_m^{(n)}$ in floating-point arithmetic as
\begin{equation}\label{approx step1 compqd}
\widehat{\epsilon e}_m^{(n)}  \approx \widehat{\epsilon q}_m^{(n+1)}\ominus\widehat{\epsilon q}_m^{(n)}\oplus\widehat{\epsilon e}_{m-1}^{(n+1)}\ominus\mu_1\ominus\mu_2.
\end{equation}
When computing $\widehat{q}_{m+1}^{(n)}$ with perturbed inputs, from (\ref{second term qd}) we obtain that
 \begin{equation*}
\widehat{q}_{m+1}^{(n)}\widehat{e}_m^{(n)}+\mu_3\widehat{q}_m^{(n+1)}+\mu_4\widehat{e}_m^{(n)}=\widehat{q}_{m}^{(n+1)}\widehat{e}_m^{(n+1)},
\end{equation*}
then by (\ref{product q and e}) and (\ref{absolute perturbation definition 1}), we have
 \begin{equation*}
e_m^{(n)}\epsilon q_{m+1}^{(n)}+\epsilon e_m^{(n)} \widehat{q}_{m+1}^{(n)}+\mu_3\widehat{q}_m^{(n+1)}+\mu_4\widehat{e}_m^{(n)}=\epsilon q_m^{(n+1)} {e}_m^{(n+1)}+\epsilon e_m^{(n+1)} \widehat{q}_m^{(n+1)}.
\end{equation*}
Therefore, we obtain the compensated term of $\widehat{q}_{m+1}^{(n)}$, that is
 \begin{equation}\label{step2 compqd}
\epsilon q_{m+1}^{(n)}=\big(\epsilon q_m^{(n+1)}{e}_m^{(n+1)}+\epsilon e_m^{(n+1)} \widehat{q}_m^{(n+1)}-\epsilon e_m^{(n)} \widehat{q}_{m+1}^{(n)}-\mu_3\widehat{q}_m^{(n+1)}-\mu_4\widehat{e}_m^{(n)}\big)/{e}_m^{(n)}.
\end{equation}
Hence, the approximate compensated term of $\widehat{q}_{m+1}^{(n)}$ in floating-point arithmetic can be obtained from
\begin{equation}\label{approx step2 compqd}
\widehat{\epsilon q}_{m+1}^{(n)} \approx \big(\widehat{\epsilon q}_m^{(n+1)}\otimes \widehat{e}_m^{(n+1)}\oplus\widehat{\epsilon e}_m^{(n+1)}\otimes \widehat{q}_m^{(n+1)}\ominus\widehat{\epsilon e}_m^{(n)}\otimes\widehat{q}_{m+1}^{(n)}\ominus\mu_3\otimes\widehat{q}_m^{(n+1)}\ominus\mu_4\otimes\widehat{e}_m^{(n)}\big)\oslash \widehat{e}_m^{(n)}.
\end{equation}

Since $\widehat{\epsilon e}_m^{(n)}-\widehat{\epsilon e}_m^{(n)}$ and ${\widehat{q}}_{m+1}^{(n)}-\widehat{\epsilon q}_{m+1}^{(n)}$ are more accurate than $\widehat{e}_m^{(n)}$ and ${\widehat{q}}_{m+1}^{(n)}$, respectively,
we use {\tt FastTwoSum} (see Algorithm \ref{FastTwoSum} in Appendix A) to update the computed values $\widehat{e}_m^{(n)}$ and ${\widehat{q}}_{m+1}^{(n)}$ in each inner loop (\ref{inner loop part}) of the {\tt qd} algorithm in floating-point arithmetic with the compensated terms $\widehat{\epsilon {e}}_m^{(n)}$ in (\ref{approx step1 compqd}) and $\widehat{\epsilon {q}}_{m+1}^{(n)}$ in (\ref{approx step2 compqd}). The updated results, the floating-point numbers rounding to working precision, are expected to be more accurate than the original results.
Based on the discussion above, we propose the new compensated {\tt qd} algorithm, {\tt Compqd} (Algorithm \ref{Algor compqd}), which improves the accuracy of the classical {\tt qd} algorithm with a reasonable increment in the CPU time.\\
\vspace{0.1cm}

\hrule \vspace*{-0.1cm}
\begin{algor}{\tt Compqd} \label{Algor compqd}\\
\indent~~{\bf input:}~~{$\widehat{e}_0^{(n)}=0$, $\epsilon e_0^{(n)}=0$,  $n=1,2,...$ \\
\indent\indent~~~~~~~~$[\widehat{q}_1^{(n)},-\widehat{\epsilon q}_1^{(n)}]={\tt Div\_dd\_dd}(c^{(h)}_{n+1},c^{(l)}_{n+1},c^{(h)}_n,c^{(l)}_n)$, $n=0,1,...$}\\
\indent~~{\bf output:}~~{qd scheme}\\
\indent~~{\bf for} ~{$m= 1, 2, ...$}\\
\indent~~\indent{\bf for} {$n= 0, 1, ...$}\\
 \indent~~\indent~~~~  $[s,\mu_1]={\tt TwoSum}(\widehat{q}_m^{(n+1)},-\widehat{q}_m^{(n)})$\\
 \indent~~\indent~~~~  $[\widehat{e}_m^{(n)},\mu_2]={\tt TwoSum}(s,\widehat{e}_{m-1}^{(n+1)})$\\
 \indent~~\indent~~~~  $ \widehat{\epsilon e}_m^{(n)}=\widehat{\epsilon q}_m^{(n+1)}\ominus\widehat{\epsilon q}_m^{(n)}\oplus\widehat{\epsilon e}_{m-1}^{(n+1)}\ominus\mu_1\ominus\mu_2$\\
  \indent~~\indent~~~~ $[\widehat{e}_m^{(n)},-\widehat{\epsilon e}_m^{(n)}]={\tt FastTwoSum}(\widehat{e}_m^{(n)},-\widehat{\epsilon e}_m^{(n)})$\\
  \indent~~\indent~~~~ $[t,\mu_3]={\tt DivRem}(\widehat{e}_{m}^{(n+1)},\widehat{e}_{m}^{(n)})$\\
  \indent~~\indent~~~~ $[\widehat{q}_{m+1}^{(n)},\mu_4]={\tt TwoProd}(t,\widehat{q}_{m}^{(n+1)})$\\
   \indent~~\indent~~~~ $ \widehat{\epsilon q}_{m+1}^{(n)}=\big(\widehat{\epsilon q}_m^{(n+1)}\otimes \widehat{e}_m^{(n+1)}\oplus\widehat{\epsilon e}_m^{(n+1)}\otimes \widehat{q}_m^{(n+1)}\ominus\widehat{\epsilon e}_m^{(n)}\otimes\widehat{q}_{m+1}^{(n)}\ominus\mu_3\otimes\widehat{q}_m^{(n+1)}\ominus\mu_4\otimes\widehat{e}_m^{(n)}\big)\oslash \widehat{e}_m^{(n)}$\\
  \indent~~\indent~~~~ $[\widehat{q}_{m+1}^{(n)},-\widehat{\epsilon q}_{m+1}^{(n)}]={\tt FastTwoSum}(\widehat{q}_{m+1}^{(n)},-\widehat{\epsilon q}_{m+1}^{(n)})$\\
\indent~~\indent{\bf end}\\
\indent~~{\bf end}
 \end{algor}\vspace*{-0.1cm}
\hrule \vspace{0.4cm}

\noindent The {\tt Compqd} algorithm requires 69 flops in the inner loop.

We remark that if $c_i\in\mathbb{F}$ for $i=0,1,2\ldots$, the inputs $\widehat{q}_1^{(n)}$ and $\widehat{\epsilon q}_1^{(n)}$ of {\tt Compqd} can be obtained with
\begin{equation}\label{floating inputs of compqd}
[\widehat{q}_1^{(n)},r]={\tt DivRem}(c_{n+1},c_n), \quad -\widehat{\epsilon q}_1^{(n)}=r\oslash c_n.
\end{equation}

Moreover, it must be noticed that $\widehat{e}_m^{(n)}$ and $\widehat{q}_{m+1}^{(n)}$ in the {\tt Compqd} algorithm are different from those in {\tt qd} algorithm in floating-point arithmetic, because here we use {\tt FastTwoSum} to update the computed values in each inner loop.

%-----------Roundoff error bounds of compqd-------------------------
\section{Error analysis of {\tt Compqd} algorithm}\label{sec 5 comp err}

In a similar way as the error analysis of {\tt qd} algorithm, we first present the error analysis for the following \emph{inner loop} of the {\tt Compqd} algorithm in floating-point arithmetic in Subsection \ref{subsec5.1},  in which bold characters mean the `outputs' and
the rest mean the `inputs':
\begin{equation}\label{inner loop part of compqd}
\begin{array}{cccc}
     &\widehat{q}_m^{(n)}, \widehat{\epsilon q}_m^{(n)}& &  \\
    \widehat{e}_{m-1}^{(n+1)}, \widehat{\epsilon e}_{m-1}^{(n+1)}& &\bf \widehat{e}_m^{(n)}, \widehat{\epsilon e}_m^{(n)}& \\
     &\widehat{q}_m^{(n+1)}, \widehat{\epsilon q}_m^{(n+1)} & &\bf \widehat{q}_{m+1}^{(n)} , \widehat{\epsilon q}_{m+1}^{(n)}\\
     & & \widehat{e}_m^{(n+1)}, \widehat{\epsilon e}_m^{(n+1)}&  \\
    \end{array}
\end{equation}
For details, the inputs of the first step in the inner loop (\ref{inner loop part of compqd}) are $\widehat{e}_{m-1}^{(n+1)}$, $\widehat{q}_m^{(n)}$, $\widehat{q}_m^{(n+1)}$, $\widehat{\epsilon e}_{m-1}^{(n+1)}$, $\widehat{\epsilon q}_m^{(n)}$ and $\widehat{\epsilon q}_m^{(n+1)}$, while the outputs are $\bf \widehat{e}_m^{(n)}$ and $\bf \widehat{\epsilon e}_m^{(n)}$. The outputs of the second step are $\bf \widehat{q}_{m+1}^{(n)}$ and $\bf \widehat{\epsilon q}_{m+1}^{(n)}$, while the inputs are $\widehat{q}_m^{(n+1)}$, $\bf \widehat{e}_m^{(n)}$, $\widehat{e}_m^{(n+1)}$, $\widehat{\epsilon q}_m^{(n+1)}$, $\bf \widehat{\epsilon e}_m^{(n)}$, and $\widehat{\epsilon e}_m^{(n+1)}$.

In Subsection \ref{subsec5.2}, the rounding error bounds of $\widehat{e}_m^{(n)}-\widehat{\epsilon e}_m^{(n)}$ and $\widehat{q}_{m+1}^{(n)}-\widehat{\epsilon q}_{m+1}^{(n)}$ from {\tt Compqd} are obtained by using mathematical induction. Then, we finally give the rounding error bounds of the terms $\widehat{e}_m^{(n)}$ and $\widehat{q}_{m+1}^{(n)}$ of {\tt Compqd} updated by {\tt FastTwoSum}.

In this section, we denote the perturbations of the approximate compensated terms $\widehat{\epsilon e}_m^{(n)}$ and $\widehat{\epsilon q}_{m+1}^{(n)}$ in (\ref{approx step1 compqd}) and (\ref{approx step2 compqd}) by $\epsilon\epsilon e_m^{(n)}$ and $\epsilon\epsilon q_{m+1}^{(n)}$, respectively, which satisfy
\begin{equation}\label{per of compensated term}
\begin{split}
\widehat{\epsilon e}_m^{(n)}&=\epsilon e_m^{(n)}+\epsilon\epsilon e_m^{(n)},\\
\widehat{\epsilon q}_{m+1}^{(n)}&=\epsilon q_{m+1}^{(n)}+\epsilon\epsilon q_{m+1}^{(n)},
\end{split}
\end{equation}
where $\epsilon e_m^{(n)}$ and $\epsilon q_{m+1}^{(n)}$ are defined in (\ref{step1 compqd}) and (\ref{step2 compqd}).  Just like
$\epsilon e_m^{(n)}$ and $\epsilon q_{m+1}^{(n)}$  are the compensated terms of $ \widehat{e}_m^{(n)}$ and $ \widehat{q}_{m+1}^{(n)}$,
$\epsilon\epsilon e_m^{(n)}$ and $\epsilon\epsilon q_{m+1}^{(n)}$  are the compensated terms of $ \widehat{\epsilon e}_m^{(n)}$ and $ \widehat{ \epsilon q}_{m+1}^{(n)}$.
Then, from  (\ref{absolute perturbation definition 1}) and (\ref{per of compensated term}), we have
\begin{equation}\label{important equation of compqd}
\begin{split}
e_m^{(n)}-\epsilon\epsilon e_m^{(n)}&=\widehat{e}_m^{(n)}-\widehat{\epsilon e}_m^{(n)},\\
q_{m+1}^{(n)}-\epsilon\epsilon q_{m+1}^{(n)}&=\widehat{q}_{m+1}^{(n)}-\widehat{\epsilon q}_{m+1}^{(n)}.
\end{split}
\end{equation}
Note that $\widehat{e}_m^{(n)}$ and $\widehat{q}_{m+1}^{(n)}$ in {\tt Compqd} are different from those in {\tt qd} algorithm in floating-point arithmetic, but the values of $\widehat{e}_m^{(n)}-\widehat{\epsilon e}_m^{(n)}$ and $\widehat{q}_{m+1}^{(n)}-\widehat{\epsilon q}_{m+1}^{(n)}$ have not been changed when we use {\tt FastTwoSum} in each inner loop due to Theorem \ref{EFT dingli}.

\subsection{Error analysis for the inner loop of the {\tt Compqd} algorithm}\label{subsec5.1}
Before giving the error analysis, we note that the inputs of {\tt Compqd} algorithm have been updated by using {\tt FastTwoSum}, but the outputs in this subsection are not updated.

We first consider the perturbations of the floating-point inputs for the inner loop of {\tt Compqd} in real arithmetic.
Let
\begin{equation}\label{real step1 compqd}
\widetilde{\epsilon e}_m^{(n)*}=\widehat{\epsilon q}_m^{(n+1)}-\widehat{\epsilon q}_m^{(n)}+\widehat{\epsilon e}_{m-1}^{(n+1)}-\mu_1-\mu_2,
\end{equation}
\begin{equation}\label{real step2 compqd}
\widetilde{\epsilon q}_{m+1}^{(n)*}=(\widehat{\epsilon q}_m^{(n+1)}\widehat{e}_m^{(n+1)}+\widehat{\epsilon e}_m^{(n+1)} \widehat{q}_m^{(n+1)}-\widehat{\epsilon e}_m^{(n)} \widehat{q}_{m+1}^{(n)}-\widehat{\epsilon q}_m^{(n+1)}\widehat{\epsilon e}_m^{(n+1)}-\mu_3\widehat{q}_m^{(n+1)}-\mu_4\widehat{e}_m^{(n)})/(\widehat{e}_m^{(n)}-\widehat{\epsilon e}_m^{(n)}),
\end{equation}
where it should be noticed that $\widetilde{\epsilon e}_m^{(n)*}$ and $\widetilde{\epsilon q}_{m+1}^{(n)*}$ are different from $\widetilde{\epsilon e}_m^{(n)}$ and $\widetilde{\epsilon q}_{m+1}^{(n)}$ in (\ref{equation absolut error e}) and (\ref{equation absolut error q}), respectively.

In the following Lemma \ref{absolute perturbation of qd}, we evaluate the distance between $\widetilde{\epsilon e}_m^{(n)*}$ and ${\epsilon e}_m^{(n)}$ and the one between $\widetilde{\epsilon q}_{m+1}^{(n)*}$ and
${\epsilon q}_{m+1}^{(n)}$.

\begin{lemma}\label{absolute perturbation of qd}
The bounds of $\widetilde{\epsilon e}_m^{(n)*}$ and $\widetilde{\epsilon q}_{m+1}^{(n)*}$  are given by
\begin{equation}\label{absolute perturbation bound e}
|\widetilde{\epsilon e}_m^{(n)*}-\epsilon e_m^{(n)}|\leq|\epsilon\epsilon q_m^{(n+1)}|+|\epsilon\epsilon q_m^{(n)}|+|\epsilon\epsilon e_{m-1}^{(n+1)}|,
\end{equation}
and
\begin{equation}\label{absolute perturbation bound q}
|\widetilde{\epsilon q}_{m+1}^{(n)*}-\epsilon q_{m+1}^{(n)}|\leq\bar{\beta}_{m+1}^{(n)},
\end{equation}
where
\begin{equation}\label{abs beta}
\bar{\beta}_{m+1}^{(n)}={d_m^{(n)}}\times\frac{|q_m^{(n+1)}||\epsilon\epsilon e_{m}^{(n+1)}|+|e_{m}^{(n+1)}||\epsilon\epsilon q_m^{(n+1)}|+|q_{m+1}^{(n)}||\epsilon\epsilon e_m^{(n)}|+|\epsilon\epsilon q_{m}^{(n+1)}||\epsilon\epsilon e_{m}^{(n+1)}|}{|e_{m}^{(n)}|},
\end{equation}
with
\begin{equation}\label{dmn}
 d_m^{(n)}=\big|\frac{e_m^{(n)}}{e_m^{(n)}-\epsilon\epsilon e_m^{(n)}}\big|,
 \end{equation}
and supposing $e_m^{(n)}\neq0$ and $e_m^{(n)}\neq\epsilon\epsilon e_m^{(n)}$.

\end{lemma}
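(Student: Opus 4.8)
The plan is to treat this statement as the exact ``second-order'' analogue of Lemma~\ref{absolute perturbation of qd_0}: the pairs $(\epsilon e_m^{(n)},\epsilon q_m^{(n)})$ now play the role that $(e_m^{(n)},q_m^{(n)})$ played there, while the perturbations $(\epsilon\epsilon e_m^{(n)},\epsilon\epsilon q_m^{(n)})$ play the role of $(\epsilon e_m^{(n)},\epsilon q_m^{(n)})$. The crucial structural remark is that the EFT correction terms $\mu_1,\mu_2$ (resp. $\mu_3,\mu_4$) occur \emph{identically} in the exact compensated term and in its real-arithmetic approximation, since both are built from the same floating-point inputs; hence they cancel in every difference, and only genuine second-order perturbations survive.

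For the first bound (\ref{absolute perturbation bound e}) I would simply subtract (\ref{step1 compqd}) from (\ref{real step1 compqd}). The terms $-\mu_1-\mu_2$ cancel, leaving $\widetilde{\epsilon e}_m^{(n)*}-\epsilon e_m^{(n)}=(\widehat{\epsilon q}_m^{(n+1)}-\epsilon q_m^{(n+1)})-(\widehat{\epsilon q}_m^{(n)}-\epsilon q_m^{(n)})+(\widehat{\epsilon e}_{m-1}^{(n+1)}-\epsilon e_{m-1}^{(n+1)})$. By (\ref{per of compensated term}) each parenthesis collapses to the corresponding $\epsilon\epsilon$ quantity, and the triangle inequality yields (\ref{absolute perturbation bound e}) at once. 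This is the easy half and mirrors (\ref{equation absolut error e}).

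The real work is the second bound (\ref{absolute perturbation bound q}), where (\ref{step2 compqd}) and (\ref{real step2 compqd}) carry different denominators, $e_m^{(n)}$ versus $\widehat{e}_m^{(n)}-\widehat{\epsilon e}_m^{(n)}$. The key observation is that, by the first line of (\ref{important equation of compqd}), the latter equals $e_m^{(n)}-\epsilon\epsilon e_m^{(n)}$, so I would multiply the difference $\widetilde{\epsilon q}_{m+1}^{(n)*}-\epsilon q_{m+1}^{(n)}$ through by $e_m^{(n)}-\epsilon\epsilon e_m^{(n)}$. Substituting the numerator of (\ref{real step2 compqd}) together with the numerator identity behind (\ref{step2 compqd}) (itself derived from the exact product identities (\ref{second term qd}) and the rhombus rule (\ref{product q and e})), the terms $\mu_3\widehat{q}_m^{(n+1)}$ and $\mu_4\widehat{e}_m^{(n)}$ cancel. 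I would then group the survivors into the three natural packets built around $\widehat{\epsilon q}_m^{(n+1)}$, $\widehat{q}_m^{(n+1)}$ and $\widehat{q}_{m+1}^{(n)}$ and apply (\ref{per of compensated term}) and (\ref{important equation of compqd}) to each; the spurious first-order cross terms (such as $\epsilon q_m^{(n+1)}\epsilon\epsilon e_m^{(n+1)}$ and $\epsilon q_{m+1}^{(n)}\epsilon\epsilon e_m^{(n)}$) annihilate pairwise, leaving the clean identity
\[
(\widetilde{\epsilon q}_{m+1}^{(n)*}-\epsilon q_{m+1}^{(n)})(e_m^{(n)}-\epsilon\epsilon e_m^{(n)})=q_m^{(n+1)}\epsilon\epsilon e_m^{(n+1)}+e_m^{(n+1)}\epsilon\epsilon q_m^{(n+1)}-q_{m+1}^{(n)}\epsilon\epsilon e_m^{(n)}-\epsilon\epsilon q_m^{(n+1)}\epsilon\epsilon e_m^{(n+1)},
\]
the exact counterpart of (\ref{equation absolut error q}). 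Dividing by $e_m^{(n)}-\epsilon\epsilon e_m^{(n)}$, taking absolute values, and recognizing $|e_m^{(n)}|/|e_m^{(n)}-\epsilon\epsilon e_m^{(n)}|=d_m^{(n)}$ from (\ref{dmn}) then reproduces exactly $\bar{\beta}_{m+1}^{(n)}$ in (\ref{abs beta}); the sign of the last cross term is immaterial once absolute values are taken.

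I expect the bookkeeping in the grouping step to be the main obstacle: one must check that every first-order product cancels and that no stray term survives, which is precisely where (\ref{per of compensated term}) and (\ref{important equation of compqd}) have to be invoked in the right combination. The hypotheses $e_m^{(n)}\neq0$ and $e_m^{(n)}\neq\epsilon\epsilon e_m^{(n)}$ are exactly what legitimizes the final division and keeps $d_m^{(n)}$ finite.
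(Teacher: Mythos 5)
Your proposal is correct and takes essentially the same route as the paper's proof: both arguments derive the exact identity $\widetilde{\epsilon e}_m^{(n)*}-\epsilon e_m^{(n)}=\epsilon\epsilon q_m^{(n+1)}-\epsilon\epsilon q_m^{(n)}+\epsilon\epsilon e_{m-1}^{(n+1)}$ and the exact identity $\widetilde{\epsilon q}_{m+1}^{(n)*}-\epsilon q_{m+1}^{(n)}=\big(q_m^{(n+1)}\epsilon\epsilon e_{m}^{(n+1)}+e_{m}^{(n+1)}\epsilon\epsilon q_m^{(n+1)}-q_{m+1}^{(n)}\epsilon\epsilon e_m^{(n)}-\epsilon\epsilon q_{m}^{(n+1)}\epsilon\epsilon e_{m}^{(n+1)}\big)/\big(e_{m}^{(n)}-\epsilon\epsilon e_m^{(n)}\big)$ (your denominator-cleared form is exactly this identity), and then conclude by the triangle inequality together with the definition of $d_m^{(n)}$. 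The paper simply states these identities without displaying the algebra, whereas you spell out the cancellation of $\mu_1,\mu_2,\mu_3,\mu_4$ and of the first-order cross terms, but the underlying argument is the same.
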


\begin{proof}
Taking into account the {\tt Compqd} algorithm in real arithmetic, considering (\ref{step1 compqd}),  the first half of (\ref{per of compensated term}) and (\ref{real step1 compqd}), we obtain that
\begin{equation}\label{equation absolut error e22}
\widetilde{\epsilon e}_m^{(n)*}-\epsilon e_m^{(n)}=\epsilon\epsilon q_m^{(n+1)}-\epsilon\epsilon q_m^{(n)}+\epsilon\epsilon e_{m-1}^{(n+1)},
\end{equation}
which can directly give us the first bound (\ref{absolute perturbation bound e}).

Similarly, from (\ref{step2 compqd}), the second half of (\ref{per of compensated term}) and (\ref{real step2 compqd}), we have
\begin{equation*}\label{equation absolut error q22}
\widetilde{\epsilon q}_{m+1}^{(n)*}-\epsilon q_{m+1}^{(n)}=\frac{q_m^{(n+1)}\epsilon\epsilon e_{m}^{(n+1)}+e_{m}^{(n+1)}\epsilon\epsilon q_m^{(n+1)}-q_{m+1}^{(n)}\epsilon\epsilon e_m^{(n)}-\epsilon\epsilon q_{m}^{(n+1)}\epsilon\epsilon e_{m}^{(n+1)}}{e_{m}^{(n)}-\epsilon\epsilon e_m^{(n)}},
\end{equation*}
that gives us the second bound (\ref{absolute perturbation bound q}).
\end{proof}

Then, we focus on  the distance between $\widetilde{\epsilon e}_m^{(n)*}$ and ${\widehat{\epsilon e}}_m^{(n)}$ and the one between $\widetilde{\epsilon q}_{m+1}^{(n)*}$ and
${\widehat{\epsilon q}}_{m+1}^{(n)}$.

\begin{lemma}\label{rounding error bounds of corrected terms in compqd for inner step}
 The distance between $\widetilde{\epsilon e}_m^{(n)*}$ and ${\widehat{\epsilon e}}_m^{(n)}$ and the one between $\widetilde{\epsilon q}_{m+1}^{(n)*}$ and
${\widehat{\epsilon q}}_{m+1}^{(n)}$ are given by
\begin{equation}\label{rounding error of step1}
|\widehat{\epsilon e}_m^{(n)}-\widetilde{\epsilon e}_m^{(n)*}|\leq\gamma_3\gamma_4 \big(|{q}_m^{(n+1)}-\epsilon\epsilon q_m^{(n+1)}|+|{q}_m^{(n)}-\epsilon\epsilon q_m^{(n)}|+|{e}_{m-1}^{(n+1)}-\epsilon\epsilon e_{m-1}^{(n+1)}|\big),
\end{equation}
and
\begin{equation}\label{rounding error of step2}
|\widehat{\epsilon q}_{m+1}^{(n)}-\widetilde{\epsilon q}_{m+1}^{(n)*}|\leq\gamma_7\gamma_8\frac{|{q}_m^{(n+1)}-\epsilon\epsilon{q}_m^{(n+1)}||{e}_m^{(n+1)}-\epsilon\epsilon{e}_m^{(n+1)}|}{|{e}_{m}^{(n)}-{\epsilon\epsilon e}_m^{(n)}|},
\end{equation}
where $\widetilde{\epsilon e}_m^{(n)*}$ and $\widetilde{\epsilon q}_{m+1}^{(n)*}$ are defined in (\ref{real step1 compqd}) and (\ref{real step2 compqd}), respectively.
\end{lemma}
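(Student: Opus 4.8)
The plan is to treat each bound as a difference between a floating-point evaluation and the exact (real-arithmetic) evaluation of a closely related expression, and to exploit two facts throughout: first, the invariance identity $\widehat{e}_m^{(n)}-\widehat{\epsilon e}_m^{(n)}=e_m^{(n)}-\epsilon\epsilon e_m^{(n)}$ and its $q$-analogue from (\ref{important equation of compqd}), which let me replace every computed high part by the accurate corrected value that appears on the right-hand sides; and second, that the remainders $\mu_1,\ldots,\mu_4$ and the compensated terms $\widehat{\epsilon q},\widehat{\epsilon e}$ are all $\mathcal{O}(u)$ relative to the corresponding main values, via the remainder bounds of Theorem~\ref{EFT dingli} and Theorem~\ref{EFT div dingli} together with the {\tt FastTwoSum} low-part estimate $|\widehat{\epsilon q}_m^{(n)}|\le u|\widehat{q}_m^{(n)}|$ (valid because the inputs have been updated by {\tt FastTwoSum}). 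The two $\gamma_k$ factors in each bound reflect a product of two independent sources of smallness: one $\gamma$ from rounding the floating-point operations, and one $\gamma$ from the fact that every summand already carries a hidden factor $\mathcal{O}(u)$.

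For the first bound (\ref{rounding error of step1}) I would observe that (\ref{approx step1 compqd}) and (\ref{real step1 compqd}) have exactly the same five summands, so $\widehat{\epsilon e}_m^{(n)}-\widetilde{\epsilon e}_m^{(n)*}$ is nothing but the accumulated rounding error of the four floating-point additions. Applying the model (\ref{floating-point model}) and Definition~\ref{def1} gives $|\widehat{\epsilon e}_m^{(n)}-\widetilde{\epsilon e}_m^{(n)*}|\le\gamma_4\big(|\widehat{\epsilon q}_m^{(n+1)}|+|\widehat{\epsilon q}_m^{(n)}|+|\widehat{\epsilon e}_{m-1}^{(n+1)}|+|\mu_1|+|\mu_2|\big)$. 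I would then bound each summand: the three compensated terms by $u$ times the corresponding computed high part ({\tt FastTwoSum}), and $\mu_1,\mu_2$ by $u$ times sums of those same high parts ({\tt TwoSum} remainder bounds). Collecting constants turns the parenthesis into $\gamma_3\big(|\widehat{q}_m^{(n+1)}|+|\widehat{q}_m^{(n)}|+|\widehat{e}_{m-1}^{(n+1)}|\big)$, and finally the invariance identity replaces each high part by its corrected value $q-\epsilon\epsilon q$ (resp. $e-\epsilon\epsilon e$) at the cost of a factor $1/(1-u)$ absorbed into the $\gamma$'s. The product of the two sources gives the stated $\gamma_3\gamma_4$.

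For the second bound (\ref{rounding error of step2}) the same philosophy applies, but now the implemented formula (\ref{approx step2 compqd}) and the reference (\ref{real step2 compqd}) differ in three ways that must all be shown to be second order: the rounding errors of the five products, four additions and one division in (\ref{approx step2 compqd}); the cross term $\widehat{\epsilon q}_m^{(n+1)}\widehat{\epsilon e}_m^{(n+1)}$ present in the reference but dropped by the implementation; and the mismatched denominators $\widehat{e}_m^{(n)}$ versus $\widehat{e}_m^{(n)}-\widehat{\epsilon e}_m^{(n)}$. Each numerator summand carries exactly one factor among $\widehat{\epsilon q},\widehat{\epsilon e},\mu_3,\mu_4$, hence is $\mathcal{O}(u)$ times a product of two main values; rounding of the roughly eight operations multiplies this by a further $\gamma_8$; the dropped cross term is a product of two $\mathcal{O}(u)$ factors, hence $\mathcal{O}(u^2)$; and since $|\widehat{\epsilon e}_m^{(n)}|\le u|\widehat{e}_m^{(n)}|$ the relative denominator perturbation is $\mathcal{O}(u)$ and contributes at the same order. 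Gathering all three into the product $\gamma_7\gamma_8$, the dominant magnitude is $|\widehat{q}_m^{(n+1)}||\widehat{e}_m^{(n+1)}|/|\widehat{e}_m^{(n)}|$; I would then invoke the product rhombus rule (\ref{product q and e}), $q_m^{(n+1)}e_m^{(n+1)}=q_{m+1}^{(n)}e_m^{(n)}$, to rewrite the term carrying $q_{m+1}^{(n)}$ and merge all the second-order contributions into the single fraction displayed in (\ref{rounding error of step2}), writing the high parts as corrected values through the invariance identity.

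The main obstacle is the second bound. Keeping the three discrepancy sources cleanly separated and verifying that each is genuinely $\mathcal{O}(u^2)$ rather than $\mathcal{O}(u)$ is the delicate bookkeeping step, and it is exactly here that the dropped cross term and the denominator mismatch must be tracked with care. The denominator is the most dangerous point: dividing by the computed $\widehat{e}_m^{(n)}$ rather than the corrected $\widehat{e}_m^{(n)}-\widehat{\epsilon e}_m^{(n)}$ could amplify errors when $e_m^{(n)}$ is small, but because $\widehat{\epsilon e}_m^{(n)}$ is only an $\mathcal{O}(u)$ relative perturbation of $\widehat{e}_m^{(n)}$, the quotient stays controlled and the amplification is absorbed, consistently with the $d_m^{(n)}$-type factor that already governs the companion Lemma~\ref{absolute perturbation of qd}. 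Collapsing every contribution to the compact single-fraction right-hand side via the rhombus rule is the final non-routine step.
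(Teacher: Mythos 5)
Your overall strategy coincides with the paper's proof: expand the floating-point evaluations (\ref{approx step1 compqd}) and (\ref{approx step2 compqd}) with $(1+\theta_k)$ factors, bound the resulting difference by a $\gamma$ factor times a sum of absolute values, show that each summand carries an extra factor of order $u$ via the {\tt TwoSum}/{\tt TwoProd}/{\tt DivRem} remainder bounds and the {\tt FastTwoSum} low-part bound, handle the denominator mismatch through the identity $\widehat{e}_m^{(n)}=(\widehat{e}_m^{(n)}-\widehat{\epsilon e}_m^{(n)})(1+\theta_1)$ valid for updated inputs, and finally pass to the corrected values through (\ref{important equation of compqd}). Your treatment of the first bound is exactly the paper's ($\gamma_4$ from the four floating-point additions, $u+2\gamma_1\le\gamma_3$ for the hidden smallness of the summands, product $\gamma_3\gamma_4$), up to the cosmetic choice of bounding low parts against high parts and converting afterwards rather than bounding them directly against the corrected sums.

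The one step that would fail as written is your appeal to the product rhombus rule (\ref{product q and e}) to rewrite "the term carrying $q_{m+1}^{(n)}$". The terms that actually occur in the bound are $|\widehat{\epsilon e}_m^{(n)}||\widehat{q}_{m+1}^{(n)}|$ and $|\mu_4||\widehat{e}_m^{(n)}|$ (with $|\mu_4|\le u|\widehat{q}_{m+1}^{(n)}|$), i.e.\ they involve the \emph{computed} value $\widehat{q}_{m+1}^{(n)}$, not the exact $q_{m+1}^{(n)}$. The identity $q_m^{(n+1)}e_m^{(n+1)}=q_{m+1}^{(n)}e_m^{(n)}$ holds only for the exact quantities, and at this point of the argument no relation between $\widehat{q}_{m+1}^{(n)}$ and $q_{m+1}^{(n)}$ is available: establishing such a relation is precisely the job of the later induction (Theorem \ref{rounding error bounds of comp qd with 2}), which uses the present lemma as an ingredient, so invoking it here would be circular. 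What replaces the rhombus rule in the paper is the floating-point relation (\ref{qm+1 floating}): since $\widehat{q}_{m+1}^{(n)}$ is computed as $\widehat{e}_m^{(n+1)}\oslash\widehat{e}_m^{(n)}\otimes\widehat{q}_m^{(n+1)}$, one gets $|\widehat{q}_{m+1}^{(n)}||\widehat{e}_m^{(n)}|\le(1+\gamma_2)|\widehat{q}_m^{(n+1)}||\widehat{e}_m^{(n+1)}|$, a floating-point surrogate of the rhombus rule that makes no reference to exact values. With that substitution, and with the {\tt FastTwoSum} estimates written against the corrected sums (e.g.\ $|\widehat{\epsilon q}_m^{(n+1)}|\le u|\widehat{q}_m^{(n+1)}-\widehat{\epsilon q}_m^{(n+1)}|$), your bookkeeping goes through and collapses, as in the paper, via $\gamma_6\gamma_7+\gamma_1^2\le\gamma_7\gamma_8$ to the stated single-fraction bound.
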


\begin{proof}
We consider the rounding error of the approximate compensated term $\widehat{\epsilon e}_m^{(n)}$ for computing $\widehat{e}_m^{(n)}$.
From (\ref{floating-point model}) and (\ref{approx step1 compqd}), we obtain
\begin{equation*}
\widehat{\epsilon e}_m^{(n)}=\widehat{\epsilon q}_m^{(n+1)}(1+\theta_4)-\widehat{\epsilon q}_m^{(n)}(1+\theta_4)+\widehat{\epsilon e}_{m-1}^{(n+1)}(1+\theta_3)-\mu_1(1+\theta_2)-\mu_2(1+\theta_1).
\end{equation*}
Then, from (\ref{real step1 compqd}), we derive that
\begin{equation}\label{round error of per}
|\widehat{\epsilon e}_m^{(n)}-\widetilde{\epsilon e}_m^{(n)*}|\leq\gamma_4 \big(|\widehat{\epsilon q}_m^{(n+1)}|+|\widehat{\epsilon q}_m^{(n)}|+|\widehat{\epsilon e}_{m-1}^{(n+1)}|+|\mu_1|+|\mu_2|\big).
\end{equation}
According to Theorem \ref{EFT dingli} and (\ref{EFT sum compqd}),  we have
\begin{equation}\label{uneq 1}
\begin{split}
|\mu_1|&\leq{u}|\widehat{q}_m^{(n+1)}-\widehat{q}_m^{(n)}|\leq{u}(|\widehat{q}_m^{(n+1)}|+|\widehat{q}_m^{(n)}|),\\
|\mu_2|&\leq{u}|(\widehat{q}_m^{(n+1)}-\widehat{q}_m^{(n)})(1+\theta)+\widehat{e}_{m-1}^{(n+1)}|\leq{\gamma_1}(|\widehat{q}_m^{(n+1)}|+|\widehat{q}_m^{(n)}|+|\widehat{e}_{m-1}^{(n+1)}|).
\end{split}
\end{equation}
In each inner loop of {\tt Compqd}, all inputs have been updated by {\tt FastTwoSum}. For instance,  by Theorem~\ref{EFT dingli}, there is $|\widehat{\epsilon q}_m^{(n+1)}|\leq u|\widehat{q}_m^{(n+1)}-\widehat{\epsilon q}_m^{(n+1)}|$, where $\widehat{\epsilon q}_m^{(n+1)}$ is the value updated. The same results come for $\widehat{\epsilon q}_m^{(n)}$ and $\widehat{\epsilon e}_{m-1}^{(n+1)}$.
Hence, from (\ref{important equation of compqd}) and (\ref{uneq 1}), taking into account (\ref{round error of per}) and $u+2\gamma_1\leq\gamma_3$, we obtain
\begin{equation*}
\begin{split}
|\widehat{\epsilon e}_m^{(n)}-\widetilde{\epsilon e}_m^{(n)*}|\leq&\gamma_4\big\{u\big(|\widehat{q}_m^{(n+1)}-\widehat{\epsilon q}_m^{(n+1)}|+|\widehat{q}_m^{(n)}-\widehat{\epsilon q}_m^{(n)}|+|\widehat{e}_{m-1}^{(n+1)}-\widehat{\epsilon e}_{m-1}^{(n+1)}|\big)+\\
& \gamma_1\big(|\widehat{q}_m^{(n+1)}-\widehat{\epsilon q}_m^{(n+1)}|+|\widehat{q}_m^{(n)}-\widehat{\epsilon q}_m^{(n)}|\big)+\gamma_1\big(|\widehat{q}_m^{(n+1)}-\widehat{\epsilon q}_m^{(n+1)}|+\\
&|\widehat{q}_m^{(n)}-\widehat{\epsilon q}_m^{(n)}|+|\widehat{e}_{m-1}^{(n+1)}-\widehat{\epsilon e}_{m-1}^{(n+1)}|\big)\big\}\\
\leq&\gamma_3\gamma_4\big(|\widehat{q}_m^{(n+1)}-\widehat{\epsilon q}_m^{(n+1)}|+|\widehat{q}_m^{(n)}-\widehat{\epsilon q}_m^{(n)}|+|\widehat{e}_{m-1}^{(n+1)}-\widehat{\epsilon e}_{m-1}^{(n+1)}|\big)\\
\leq&\gamma_3\gamma_4\big(|{q}_m^{(n+1)}-\epsilon\epsilon q_m^{(n+1)}|+|{q}_m^{(n)}-\epsilon\epsilon q_m^{(n)}|+|{e}_{m-1}^{(n+1)}-\epsilon\epsilon e_{m-1}^{(n+1)}|\big).
\end{split}
\end{equation*}

Next, we consider the distance between $\widetilde{\epsilon q}_{m+1}^{(n)*}$ and
${\widehat{\epsilon q}}_{m+1}^{(n)}$.
As just commented above, all inputs have been updated by {\tt FastTwoSum} in each inner loop of {\tt Compqd}. Thus,  $\widehat{e}_{m}^{(n)}=\widehat{e}_{m}^{(n)}\ominus\widehat{\epsilon e}_{m}^{(n)}=(\widehat{e}_{m}^{(n)}-\widehat{\epsilon e}_{m}^{(n)})(1+\theta_1)$.
Therefore, from (\ref{approx step2 compqd}), we have
\begin{equation*}
\begin{split}
%\displaystyle{
\widehat{\epsilon q}_{m+1}^{(n)}&=\{\widehat{\epsilon q}_m^{(n+1)}\widehat{e}_m^{(n+1)}(1+\theta_7)+\widehat{\epsilon e}_m^{(n+1)}\widehat{q}_m^{(n+1)}(1+\theta_7)-\widehat{\epsilon e}_m^{(n)}\widehat{q}_{m+1}^{(n)}(1+\theta_6)-\\
&\mu_3\widehat{q}_m^{(n+1)}(1+\theta_5)-\mu_4\widehat{e}_m^{(n)}(1+\theta_4)\}/(\widehat{e}_{m}^{(n)}-\widehat{\epsilon e}_{m}^{(n)}).
%}
\end{split}
\end{equation*}
Then, from (\ref{real step2 compqd}), we derive that
\begin{equation}\label{compneg 6}
\begin{split}
|\widehat{\epsilon q}_{m+1}^{(n)}-\widetilde{\epsilon q}_{m+1}^{(n)*}|\leq&\gamma_7\bigg(\frac{|\widehat{\epsilon q}_m^{(n+1)}||\widehat{e}_m^{(n+1)}|+|\widehat{\epsilon e}_m^{(n+1)}||\widehat{q}_m^{(n+1)}|+|\widehat{\epsilon e}_m^{(n)}||\widehat{q}_{m+1}^{(n)}|}{|\widehat{e}_{m}^{(n)}-\widehat{\epsilon e}_m^{(n)}|}\\
& \qquad +\frac{|\mu_3||\widehat{q}_m^{(n+1)}|}{|\widehat{e}_{m}^{(n)}-\widehat{\epsilon e}_m^{(n)}|}+\frac{|\mu_4||\widehat{e}_m^{(n)}|}{|\widehat{e}_{m}^{(n)}-\widehat{\epsilon e}_m^{(n)}|}\bigg)+\frac{|\widehat{\epsilon e}_m^{(n+1)}||\widehat{\epsilon q}_m^{(n+1)}|}{|\widehat{e}_{m}^{(n)}-\widehat{\epsilon e}_m^{(n)}|}.
\end{split}
\end{equation}
Here, we consider that the output $\widehat{q}_{m+1}^{(n)}$ is not updated by using {\tt FastTwoSum}. Hence, we have
\begin{equation}\label{qm+1 floating}
\widehat{q}_{m+1}^{(n)}=\widehat{e}_m^{(n+1)}\oslash\widehat{e}_m^{(n)}\otimes\widehat{q}_m^{(n+1)}=\widehat{e}_{m}^{(n+1)}/\widehat{e}_{m}^{(n)} \times\widehat {q}_{m}^{(n+1)}(1+\theta_2)\leq\widehat{e}_{m}^{(n+1)}/\widehat{e}_{m}^{(n)} \times\widehat {q}_{m}^{(n+1)}(1+\gamma_2).
\end{equation}
Then, by Theorems \ref{EFT dingli}, \ref{EFT div dingli} and (\ref{EFT pro compqd}), we have
\begin{equation}\label{uneq 3}
\begin{split}
|\mu_3|\leq&u|\widehat{e}_m^{(n+1)}|,\\
|\mu_4|\leq&u|\widehat{q}_{m+1}^{(n)}|.
\end{split}
\end{equation}
Similarly, from Theorem \ref{EFT dingli}, we have $|\widehat{\epsilon q}_m^{(n+1)}|\leq u|\widehat{q}_m^{(n+1)}-\widehat{\epsilon q}_m^{(n+1)}|$, $|\widehat{\epsilon e}_m^{(n+1)}|\leq u|\widehat{e}_m^{(n+1)}-\widehat{\epsilon e}_m^{(n+1)}|$, $|\widehat{q}_m^{(n+1)}|\leq \frac{1}{1-u}|\widehat{q}_m^{(n+1)}-\widehat{\epsilon q}_m^{(n+1)}|$ and $|\widehat{e}_m^{(n+1)}|\leq \frac{1}{1-u}|\widehat{e}_m^{(n+1)}-\widehat{\epsilon e}_m^{(n+1)}|$.
By (\ref{qm+1 floating}), we also have $|\widehat{q}_{m+1}^{(n)}||\widehat{e}_m^{(n)}|\leq(1+\gamma_2)|\widehat{q}_m^{(n+1)}||\widehat{e}_m^{(n+1)}|$.

Finally, from(\ref{important equation of compqd}) and (\ref{uneq 3}), taking into account (\ref{compneg 6}),  with $\gamma_6\gamma_7+\gamma_1^2\leq\gamma_7\gamma_8$, we obtain
\begin{equation*}
\begin{split}
|\widehat{\epsilon q}_{m+1}^{(n)}-\widetilde{\epsilon q}_{m+1}^{(n)*}|\leq&\gamma_7\bigg(\gamma _6\times\frac{|{q}_m^{(n+1)}-\epsilon\epsilon{q}_m^{(n+1)}||{e}_m^{(n+1)}-\epsilon\epsilon{e}_m^{(n+1)}|}{|{e}_{m}^{(n)}-{\epsilon\epsilon e}_m^{(n)}|}\bigg)\\
& \qquad +\gamma_1^2\frac{|{q}_m^{(n+1)}-\epsilon\epsilon{q}_m^{(n+1)}||{e}_m^{(n+1)}-\epsilon\epsilon{e}_m^{(n+1)}|}{|{e}_{m}^{(n)}-{\epsilon\epsilon e}_m^{(n)}|}\\
\leq&\gamma_7\gamma_8\frac{|{q}_m^{(n+1)}-\epsilon\epsilon{q}_m^{(n+1)}||{e}_m^{(n+1)}-\epsilon\epsilon{e}_m^{(n+1)}|}{|{e}_{m}^{(n)}-{\epsilon\epsilon e}_m^{(n)}|}.
\end{split}
\end{equation*}

\end{proof}

Now, we present the rounding error bounds from perturbed inputs in the inner loop (\ref{inner loop part of compqd}) of {\tt Compqd}.

\begin{lemma}\label{rounding error of comp for inner step}
The rounding error bounds for the inner loop (\ref{inner loop part of compqd}) of the {\tt Compqd} algorithm, considering perturbed inputs, are given by
\begin{equation}\label{compneq 3}
|\widehat{\epsilon e}_m^{(n)}-\epsilon e_m^{(n)}|\leq\gamma_3\gamma_4\big(|q_m^{(n+1)}|+|q_m^{(n)}|+|e_{m-1}^{(n+1)}|\big)+(1+\gamma_3\gamma_4)\big(|\epsilon\epsilon q_m^{(n+1)}|+|\epsilon\epsilon q_m^{(n)}|+|\epsilon\epsilon e_{m-1}^{(n+1)}|\big),
\end{equation}
and
\begin{equation}\label{compneq 9}
|\widehat{\epsilon q}_{m+1}^{(n)}-\epsilon q_{m+1}^{(n)}|\leq\gamma_7\gamma_8d_m^{(n)}|q_{m+1}^{(n)}|+(1+\gamma_7\gamma_8)\bar{\beta}_{m+1}^{(n)},
\end{equation}
where $\bar{\beta}_{m+1}^{(n)}$ is defined in (\ref{abs beta}).
\end{lemma}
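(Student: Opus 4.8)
The plan is to reuse the two-step triangle-inequality template from the proof of Lemma~\ref{rounding error bound from perturbed inputs qd}, now inserting the auxiliary quantities $\widetilde{\epsilon e}_m^{(n)*}$ and $\widetilde{\epsilon q}_{m+1}^{(n)*}$ of (\ref{real step1 compqd}) and (\ref{real step2 compqd}) as intermediaries between the floating-point compensated terms $\widehat{\epsilon e}_m^{(n)}$, $\widehat{\epsilon q}_{m+1}^{(n)}$ and the exact compensated terms $\epsilon e_m^{(n)}$, $\epsilon q_{m+1}^{(n)}$. Explicitly, I would start from
\begin{equation*}
|\widehat{\epsilon e}_m^{(n)}-\epsilon e_m^{(n)}|\leq|\widehat{\epsilon e}_m^{(n)}-\widetilde{\epsilon e}_m^{(n)*}|+|\widetilde{\epsilon e}_m^{(n)*}-\epsilon e_m^{(n)}|,
\end{equation*}
and the analogous split for the $q$-term, so that the first summand is controlled by Lemma~\ref{rounding error bounds of corrected terms in compqd for inner step} and the second by Lemma~\ref{absolute perturbation of qd}.

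For the $e$-bound (\ref{compneq 3}) the remaining work is purely algebraic. I would take the bound (\ref{rounding error of step1}) and apply $|q_m^{(n+1)}-\epsilon\epsilon q_m^{(n+1)}|\leq|q_m^{(n+1)}|+|\epsilon\epsilon q_m^{(n+1)}|$ (and its counterparts for the indices $(m,n)$ and $(m-1,n+1)$) inside the $\gamma_3\gamma_4$ factor. Adding (\ref{absolute perturbation bound e}) and regrouping the pure-perturbation contributions then produces exactly the coefficient $(1+\gamma_3\gamma_4)$ in front of $|\epsilon\epsilon q_m^{(n+1)}|+|\epsilon\epsilon q_m^{(n)}|+|\epsilon\epsilon e_{m-1}^{(n+1)}|$, which is (\ref{compneq 3}).

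The $q$-bound (\ref{compneq 9}) is the main obstacle, since the term delivered by Lemma~\ref{rounding error bounds of corrected terms in compqd for inner step}, namely $\gamma_7\gamma_8\,|q_m^{(n+1)}-\epsilon\epsilon q_m^{(n+1)}|\,|e_m^{(n+1)}-\epsilon\epsilon e_m^{(n+1)}|/|e_m^{(n)}-\epsilon\epsilon e_m^{(n)}|$, is not yet in the target shape. The key device is the product rhombus rule (\ref{product q and e}), which yields $|q_m^{(n+1)}||e_m^{(n+1)}|=|q_{m+1}^{(n)}||e_m^{(n)}|$; combined with the definition (\ref{dmn}) of $d_m^{(n)}$ this identifies $|q_m^{(n+1)}||e_m^{(n+1)}|/|e_m^{(n)}-\epsilon\epsilon e_m^{(n)}|$ with $d_m^{(n)}|q_{m+1}^{(n)}|$. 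I would then bound the numerator by $(|q_m^{(n+1)}|+|\epsilon\epsilon q_m^{(n+1)}|)(|e_m^{(n+1)}|+|\epsilon\epsilon e_m^{(n+1)}|)$, divide by $|e_m^{(n)}-\epsilon\epsilon e_m^{(n)}|$ and use $1/|e_m^{(n)}-\epsilon\epsilon e_m^{(n)}|=d_m^{(n)}/|e_m^{(n)}|$ to recognise the three cross terms $|q_m^{(n+1)}||\epsilon\epsilon e_m^{(n+1)}|$, $|e_m^{(n+1)}||\epsilon\epsilon q_m^{(n+1)}|$ and $|\epsilon\epsilon q_m^{(n+1)}||\epsilon\epsilon e_m^{(n+1)}|$ as three of the four terms of $\bar{\beta}_{m+1}^{(n)}$ in (\ref{abs beta}). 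Since the missing fourth term $d_m^{(n)}|q_{m+1}^{(n)}||\epsilon\epsilon e_m^{(n)}|/|e_m^{(n)}|$ is nonnegative, inserting it only strengthens the inequality, so the first summand is at most $\gamma_7\gamma_8\big(d_m^{(n)}|q_{m+1}^{(n)}|+\bar{\beta}_{m+1}^{(n)}\big)$. Adding the contribution $\bar{\beta}_{m+1}^{(n)}$ from (\ref{absolute perturbation bound q}) then gives the coefficient $(1+\gamma_7\gamma_8)$ of (\ref{compneq 9}). The only care required is the routine $\gamma$-arithmetic and keeping denominators nonzero, which is guaranteed by the hypotheses $e_m^{(n)}\neq0$ and $e_m^{(n)}\neq\epsilon\epsilon e_m^{(n)}$ already imposed in Lemma~\ref{absolute perturbation of qd}.
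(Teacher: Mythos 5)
Your proposal is correct and follows essentially the same route as the paper's proof: the same triangle-inequality decomposition through $\widetilde{\epsilon e}_m^{(n)*}$ and $\widetilde{\epsilon q}_{m+1}^{(n)*}$, with the first summand controlled by Lemma~\ref{rounding error bounds of corrected terms in compqd for inner step} and the second by Lemma~\ref{absolute perturbation of qd}. Your handling of the $q$-bound, expanding the numerator, invoking the product rhombus rule (\ref{product q and e}) together with (\ref{dmn}) to extract $d_m^{(n)}|q_{m+1}^{(n)}|$, and absorbing the three cross terms into $\bar{\beta}_{m+1}^{(n)}$, is exactly the intermediate estimate (\ref{rounding error of q compqd known inputs}) used in the paper.
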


\begin{proof}
From (\ref{per of compensated term}), we have
\begin{equation*}\label{rounding error of e compqd inner loop}
|\widehat{\epsilon e}_m^{(n)}-\epsilon e_m^{(n)}|\leq|\widehat{\epsilon e}_m^{(n)}-\widetilde{\epsilon e}_m^{(n)*}|+|\widetilde{\epsilon e}_m^{(n)*}-e_m^{(n)}|.
\end{equation*}
Hence, using (\ref{absolute perturbation bound e}) in Lemma \ref{absolute perturbation of qd} and (\ref{rounding error of step1}) in Lemma \ref{rounding error bounds of corrected terms in compqd for inner step}, we can derive the first rounding error bound (\ref{compneq 3}).

Next, we obtain that
\begin{equation*}\label{rounding error of q compqd inner loop}
|\widehat{\epsilon q}_{m+1}^{(n)}-\epsilon q_{m+1}^{(n)}|\leq|\widehat{\epsilon q}_{m+1}^{(n)}-\widetilde{\epsilon q}_{m+1}^{(n)*}|+|\widetilde{\epsilon q}_{m+1}^{(n)*}-q_{m+1}^{(n)}|.
\end{equation*}
From (\ref{product q and e}) and (\ref{rounding error of step2}) in Lemma \ref{rounding error bounds of corrected terms in compqd for inner step}, we have that
\begin{equation}\label{rounding error of q compqd known inputs}
|\widehat{\epsilon q}_{m+1}^{(n)}-\widetilde{\epsilon q}_{m+1}^{(n)*}|\leq{d}_m^{(n)}\times\gamma_7\gamma_8\bigg(|q_{m+1}^{(n)}|+\frac{|{q}_m^{(n+1)}||\epsilon\epsilon{e}_m^{(n+1)}|+|{e}_m^{(n+1)}||
\epsilon\epsilon{q}_m^{(n+1)}|+|\epsilon\epsilon{q}_m^{(n+1)}||\epsilon\epsilon{e}_m^{(n+1)}|}{|{e}_{m}^{(n)}|}\bigg),
\end{equation}
where $d_m^{(n)}=\big|\frac{e_m^{(n)}}{e_m^{(n)}-\epsilon\epsilon e_m^{(n)}}\big|$ with $e_m^{(n)}\neq0$ and $\frac{\epsilon\epsilon e_m^{(n)}}{e_m^{(n)}}\neq1$.
Then, taking into account (\ref{absolute perturbation bound q}) in Lemma \ref{absolute perturbation of qd} and (\ref{rounding error of q compqd known inputs}), we derive the second rounding error bound (\ref{compneq 9}).
\end{proof}

\subsection{Rounding error bounds of the {\tt Compqd} algorithm}\label{subsec5.2}

With the previous results, we proceed in a similar way as in Subsection \ref{subsec3.2}, obtaining the rounding error bounds of $\widehat{e}_m^{(n)}-\widehat{\epsilon e}_m^{(n)}$ and $\widehat{q}_{m+1}^{(n)}-\widehat{\epsilon q}_{m+1}^{(n)}$ by the {\tt Compqd} algorithm from the perturbed inputs using  mathematical induction in Theorem \ref{rounding error bounds of comp qd with 2}. Here, we see $\widehat{e}_m^{(n)}-\widehat{\epsilon e}_m^{(n)}$ as a number with high accuracy,  the same case comes with $\widehat{q}_{m+1}^{(n)}-\widehat{\epsilon q}_{m+1}^{(n)}$. After that, we study the rounding error bounds of $\widehat{e}_m^{(n)}$ and $\widehat{q}_{m+1}^{(n)}$ updated by {\tt FastTwoSum} which will be shown in Theorem \ref{rounding error bounds of comp qd}.

\begin{theorem}\label{rounding error bounds of comp qd with 2}
The forward rounding error bounds of $\widehat{e}_m^{(n)}-\widehat{\epsilon e}_m^{(n)}$ and $\widehat{q}_{m+1}^{(n)}-\widehat{\epsilon q}_{m+1}^{(n)}$ from the {\tt Compqd} algorithm, in the real coefficients case ($c_i\in\mathbb{R}$), are given by
\begin{equation}\label{compqd err for em with 2}
|\widehat{e}_m^{(n)}-\widehat{\epsilon e}_m^{(n)}-e_m^{(n)}|\leq\bigg(\prod_{i=0}^{m-1}D_i\bigg)\times\gamma_{11m-5}\gamma_{11m-4}|\bar{e}_m^{(n)}|,
\end{equation}
and
\begin{equation}\label{compqd err for q(m+1) with 2}
|\widehat{q}_{m+1}^{(n)}-\widehat{\epsilon q}_{m+1}^{(n)}-q_{m+1}^{(n)}|\leq\bigg(\prod_{i=0}^{m}D_i\bigg)\times\gamma_{11m+2}^2|\bar{q}_{m+1}^{(n)}|,
\end{equation}
with
\begin{equation}\label{def D}
D_m=\max\limits_n\big\{d_m^{(n)*}\big\}, \qquad
d_m^{(n)*}=\max\big\{d_m^{(n)},1\big\},
\end{equation}
and where $d_m^{(n)}$ is defined in (\ref{dmn}), $\bar{e}_m^{(n)}$ and $\bar{q}_{m+1}^{(n)}$ are defined in Definition \ref{condition number}, and supposing  $\prod\limits_{i=1}^{m}D_i\ll\frac{1}{u}$, $e_m^{(n)}\neq0$, $\frac{\epsilon\epsilon e_m^{(n)}}{e_m^{(n)}}\neq1$. The initial values are given by $d_0^{(n)}=1, \, \bar{q}_1^{(n)}=q_1^{(n)}, \,  \bar{e}_0^{(n)}=0$.
\end{theorem}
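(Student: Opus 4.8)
The plan is to recognize that the two quantities to be bounded are nothing but the second-order perturbations $\epsilon\epsilon e_m^{(n)}$ and $\epsilon\epsilon q_{m+1}^{(n)}$. Indeed, by the identity (\ref{important equation of compqd}) one has $\widehat{e}_m^{(n)}-\widehat{\epsilon e}_m^{(n)}-e_m^{(n)}=-\epsilon\epsilon e_m^{(n)}$ and $\widehat{q}_{m+1}^{(n)}-\widehat{\epsilon q}_{m+1}^{(n)}-q_{m+1}^{(n)}=-\epsilon\epsilon q_{m+1}^{(n)}$, so the theorem is equivalent to the bounds $|\epsilon\epsilon e_m^{(n)}|\le(\prod_{i=0}^{m-1}D_i)\gamma_{11m-5}\gamma_{11m-4}|\bar e_m^{(n)}|$ and $|\epsilon\epsilon q_{m+1}^{(n)}|\le(\prod_{i=0}^{m}D_i)\gamma_{11m+2}^2|\bar q_{m+1}^{(n)}|$. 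Moreover, by (\ref{per of compensated term}), Lemma \ref{rounding error of comp for inner step} is precisely a one-step recursion for these perturbations: (\ref{compneq 3}) and (\ref{compneq 9}) read $|\epsilon\epsilon e_m^{(n)}|\le(\cdots)$ and $|\epsilon\epsilon q_{m+1}^{(n)}|\le(\cdots)$, with the second-order perturbations of the neighbouring table entries appearing on the right. This reduces the whole statement to a single induction over $m$ along the rhombus structure, \emph{structurally identical} to the proof of Theorem \ref{rounding error bounds of qd}, but with $b_m^{(n)}\to d_m^{(n)}$, $B_i\to D_i$, and the first-order rounding factors $\gamma_2,\gamma_1$ replaced by the \emph{second-order} factors $\gamma_7\gamma_8$ and $\gamma_3\gamma_4$ furnished by (\ref{compneq 3})--(\ref{compneq 9}).

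I would therefore run the same four-step induction used for Theorem \ref{rounding error bounds of qd}. \textbf{Base case} ($m=0$): the initial double-double division gives directly $|\epsilon\epsilon q_1^{(n)}|=|\widehat q_1^{(n)}-\widehat{\epsilon q}_1^{(n)}-q_1^{(n)}|\le\gamma_2^2|q_1^{(n)}|$ by (\ref{inputs}), matching $\gamma_{11\cdot0+2}^2|\bar q_1^{(n)}|$ with $D_0=1$. \textbf{Steps $m=1,2$}: using $\epsilon\epsilon e_0^{(n)}=0$, I substitute the base bound into (\ref{compneq 3}) to obtain $|\epsilon\epsilon e_1^{(n)}|\le\gamma_6\gamma_7|\bar e_1^{(n)}|$, then expand $\bar\beta_2^{(n)}$ from (\ref{abs beta}) with the product rhombus rule (\ref{product q and e}) and $d_1^{(n)}\le D_1$, exactly as in (\ref{per err for q2}), and feed it through (\ref{compneq 9}) to reach $|\epsilon\epsilon q_2^{(n)}|\le D_1\gamma_{13}^2|\bar q_2^{(n)}|$; the case $m=2$ repeats this to expose the pattern $\gamma_{11m-5}\gamma_{11m-4}$ and $\gamma_{11m+2}^2$. \textbf{Inductive step}: assuming the two bounds at $m=k$, I plug them into (\ref{compneq 3}) together with (\ref{abs e}) to get the bound for $e_{k+1}^{(n)}$, then expand $\bar\beta_{k+2}^{(n)}$ using (\ref{abs q}), (\ref{product q and e}) and $d_{k+1}^{(n)}\le D_{k+1}$, and finally combine via (\ref{compneq 9}); this is the verbatim analogue of (\ref{qd err for em+1})--(\ref{per err for qm+2}).

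The main obstacle will be the $\gamma$-index bookkeeping, which is genuinely different from the qd case because every term is now a \emph{product of two} $\gamma$ factors. The verification reduces to two auxiliary inequalities of the form $\gamma_3\gamma_4+(1+\gamma_3\gamma_4)\gamma_c^2\le\gamma_{c+4}\gamma_{c+5}$ (for the $e$-update, with $c=11k+2$) and $\gamma_7\gamma_8+(1+\gamma_7\gamma_8)\gamma_{c'}\gamma_{c'+1}\le\gamma_{c'+7}^2$ (for the $q$-update, with $c'=11k+6$), which together account for the $+11$ increment in the index per full rhombus step. These follow from the listed properties $u+\gamma_k\le\gamma_{k+1}$, $i\gamma_k<\gamma_{ik}$ and $\gamma_k+\gamma_j+\gamma_k\gamma_j\le\gamma_{k+j}$, once one checks that the linear-in-$c$ slack on the right dominates the constant contribution of the rounding factors; I expect to isolate these as a short preliminary lemma on products of two $\gamma$'s so that the induction itself stays clean. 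The remaining care is the propagation of the $D_i$ factors, where the quadratic cross term $\epsilon\epsilon q_m^{(n+1)}\epsilon\epsilon e_m^{(n+1)}$ in $\bar\beta_{m+1}^{(n)}$ contributes a $(\prod_i D_i)^2$ that must be absorbed under the standing assumption $\prod_{i=1}^m D_i\ll1/u$, exactly as the $B_i^2$ term was handled in (\ref{per err for q3}) and (\ref{per err for qm+2}).
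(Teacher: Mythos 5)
Your proposal is correct and follows essentially the same route as the paper's own proof: the reduction via (\ref{important equation of compqd}) to bounding $|\epsilon\epsilon e_m^{(n)}|$ and $|\epsilon\epsilon q_{m+1}^{(n)}|$, the use of Lemma \ref{rounding error of comp for inner step} as the one-step recursion, the base case (\ref{compqd err for q1}) from the double-double initialization, and the four-step induction mirroring Theorem \ref{rounding error bounds of qd} (including the absorption of the $(\prod_i D_i)^2$ cross term under $\prod_i D_i\ll 1/u$) are exactly what the paper does. The only differences are cosmetic: your intermediate constant $\gamma_6\gamma_7$ at $m=1$ is slightly looser than the paper's $\gamma_5\gamma_6$ (both suffice), and the paper verifies the $\gamma$-index inequalities inline rather than isolating them in a preliminary lemma.
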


\begin{proof}
It is easy to see that $D_m\geq 1$, $D_m\geq d_m^{(n)}$, $|e_m^{(n)}|\leq|\bar{e}_m^{(n)}|$ and $|q_{m+1}^{(n)}|\leq|\bar{q}_{m+1}^{(n)}|$,  $\forall m, n\in \mathbb{N}$.

\textbf{Step 1}: When $m=0$, we consider the perturbations of the inputs $\widehat{q}_1^{(n)}-\widehat{\epsilon q}_1^{(n)}$. From (\ref{inputs}) and (\ref{important equation of compqd}), we obtain
\begin{equation}\label{compqd err for q1}
|\epsilon\epsilon q_1^{(n)}|=|\widehat{q}_1^{(n)}-\widehat{\epsilon q}_1^{(n)}-q_1^{(n)}|\leq\gamma_2^2|q_1^{(n)}|\leq\gamma_2^2|\bar{q}_1^{(n)}|.
\end{equation}

\textbf{Step 2}: For $m=1$, as $e_0^{(n)}=0$, according to (\ref{compneq 3}) in Lemma \ref{rounding error of comp for inner step} and (\ref{compqd err for q1}), from (\ref{per of compensated term}) we have
\begin{equation*}\label{compqd err for e1}
|\epsilon\epsilon e_1^{(n)}|\leq\big(\gamma_3\gamma_4+\gamma_2^2(1+\gamma_3\gamma_4)\big)|\bar{e}_1^{(n)}|\leq\gamma_5\gamma_6|\bar{e}_1^{(n)}|,
\end{equation*}
where $\bar{e}_1^{(n)}=|q_1^{(n+1)}|+|q_1^{(n)}|$.

Besides, considering $\bar{\beta}_2^{(n)}$ in Lemma \ref{rounding error of comp for inner step} which is defined in (\ref{abs beta}), we obtain
\begin{equation}\label{compqd per err for q2}
\begin{split}
|\bar{\beta}_2^{(n)}|&\leq{d_1^{(n)}}\times\bigg(\gamma_5\gamma_6\frac{|\bar{e}_1^{(n+1)}|}{|e_1^{(n+1)}|}|q_2^{(n)}|+\gamma_2^2|q_2^{(n)}|+\gamma_5\gamma_6\frac{|\bar{e}_1^{(n)}|}{|e_1^{(n)}|}|q_2^{(n)}|+\gamma_2^2\gamma_5\gamma_6\frac{|\bar{q}_1^{(n+1)}|}{|q_1^{(n+1)}|}\frac{|\bar{e}_1^{(n+1)}|}{|e_1^{(n+1)}|}|q_2^{(n)}|\bigg)\\
&\leq{D_1}\gamma_5\gamma_6|\bar{q}_2^{(n)}|,
\end{split}
\end{equation}
with $\displaystyle{\bar{q}_2^{(n)}=\bigg(\frac{|\bar{e}_1^{(n+1)}|}{|e_1^{(n+1)}|}+1+\frac{|\bar{e}_1^{(n)}|}{|e_1^{(n)}|}\bigg)|q_2^{(n)}|}$. Hence, from (\ref{per of compensated term}), (\ref{compneq 9}) in Lemma \ref{rounding error of comp for inner step} and (\ref{compqd per err for q2}), we derive
\begin{equation}\label{compqd err for q2}
\begin{split}
|\epsilon\epsilon q_2^{(n)}|&\leq\bigg\{d_1^{(n)}\gamma_7\gamma_8+D_1\gamma_5\gamma_6(1+\gamma_7\gamma_8)\bigg(\frac{|\bar{e}_1^{(n+1)}|}{|e_1^{(n+1)}|}+1+\frac{|\bar{e}_1^{(n)}|}{|e_1^{(n)}|}\bigg)\bigg\}|q_2^{(n)}|\\
&\leq{D_1}\times\big\{\gamma_7\gamma_8+\gamma_5\gamma_6(1+\gamma_7\gamma_8)\big\}|\bar{q}_2^{(n)}|\\
&\leq{D_1}\gamma_{13}^2|\bar{q}_2^{(n)}|.
\end{split}
\end{equation}

\textbf{Step 3}: For $m=2$, due to $D_1\geq 1$,  we obtain in a similar way
\begin{equation}\label{compqd err for e2}
\begin{split}
|\epsilon\epsilon e_2^{(n)}|&\leq\big\{\gamma_3\gamma_4+{D_1}\gamma_{13}^2(1+\gamma_3\gamma_4)\big\}|\bar{e}_2^{(n)}|\\
&\leq{D_1}\times\big\{\gamma_3\gamma_4+\gamma_{13}^2(1+\gamma_3\gamma_4)\big\}|\bar{e}_2^{(n)}|\\
&\leq{D_1}\gamma_{16}\gamma_{17}|\bar{e}_2^{(n)}|,
\end{split}
\end{equation}
where $\bar{e}_2^{(n)}=|\bar{q}_2^{(n+1)}|+|\bar{q}_2^{(n)}|+|\bar{e}_1^{(n+1)}|$.

Again, for the next step, as $D_1\ll\frac{1}{u}$, from (\ref{compneq 9}) in Lemma \ref{rounding error of comp for inner step}, (\ref{compqd err for q2}) and (\ref{compqd err for e2}), we have
\begin{equation}\label{compqd per err for q3}
\begin{split}
|\bar{\beta}_3^{(n)}|&\leq{d_2^{(n)}}\times\bigg(D_1\gamma_{16}\gamma_{17}\frac{|\bar{e}_2^{(n+1)}|}{|e_2^{(n+1)}|}|q_3^{(n)}|+D_1\gamma_{13}^2\frac{|\bar{q}_{2}^{(n+1)}|}{|q_{2}^{(n+1)}|}|q_3^{(n)}|\\
& \qquad +D_1\gamma_{16}\gamma_{17}\frac{|\bar{e}_2^{(n)}|}{|e_2^{(n)}|}|q_3^{(n)}|+({D_1})^2\gamma_{13}^2\gamma_{16}\gamma_{17}\frac{|\bar{q}_2^{(n+1)}|}{|q_2^{(n+1)}|}\frac{|\bar{e}_2^{(n+1)}|}{|e_2^{(n+1)}|}|q_3^{(n)}|\bigg)\\
&\leq{D_1D_2}\gamma_{16}\gamma_{17}|\bar{q}_3^{(n)}|,
\end{split}
\end{equation}
where $D_2\geq 1$, $D_2\geq d_2^{(n)}$ and $\displaystyle{\bar{q}_3^{(n)}=\bigg(\frac{|\bar{e}_2^{(n+1)}|}{|e_2^{(n+1)}|}+\frac{|\bar{q}_2^{(n+1)}|}{|q_2^{(n+1)}|}+\frac{|\bar{e}_2^{(n)}|}{|e_2^{(n)}|}\bigg)|q_3^{(n)}|}$. Hence, from (\ref{per of compensated term}), (\ref{compneq 9}) in Lemma~\ref{rounding error of comp for inner step} and (\ref{compqd per err for q3}), we derive
\begin{equation*}\label{compqd err for q3}
\begin{split}
|\epsilon\epsilon q_3^{(n)}|&\leq{d_2^{(n)}}\gamma_7\gamma_8|q_3^{(n)}|+D_1D_2\gamma_{16}\gamma_{17}(1+\gamma_7\gamma_8)|\bar{q}_3^{(n)}|\\
&\leq{D_1D_2}\big\{\gamma_7\gamma_8+\gamma_{16}\gamma_{17}(1+\gamma_7\gamma_8)\big\}|\bar{q}_3^{(n)}|\\
&\leq{D_1D_2}\gamma_{24}^2|\bar{q}_3^{(n)}|.
\end{split}
\end{equation*}

\textbf{Step 4}:  In the general case, for $k \in \mathbb{N}$, and when $m=k$, we assume that
\begin{equation*}
|\widehat{e}_k^{(n)}-\widehat{\epsilon e}_k^{(n)}-e_k^{(n)}|=|\epsilon\epsilon e_k^{(n)}|\leq\bigg(\prod_{i=0}^{k-1}{D_i}\bigg)\gamma_{11k-5}\gamma_{11k-4}|\bar{e}_k^{(n)}|,
\end{equation*}
and
\begin{equation*}
|\widehat{q}_{k+1}^{(n)}-\widehat{\epsilon q}_{k+1}^{(n)}-q_{k+1}^{(n)}|=|\epsilon\epsilon q_{k+1}^{(n)}|\leq\bigg(\prod_{i=0}^{k}{D_i}\bigg)\gamma_{11k+2}^2|\bar{q}_{k+1}^{(n)}|,
\end{equation*}
where $d_0^{(n)*}=1$, $d_i^{(n)*}=\max\big\{\big|\frac{e_i^{(n)}}{e_i^{(n)}-\epsilon\epsilon e_i^{(n)}}\big|,1\big\}$, $D_i=\max\limits_n\big\{d_i^{(n)*}\big\}$, $\bar{e}_k^{(n)}$ and $\bar{q}_{k+1}^{(n)}$ are defined in (\ref{abs e}) and (\ref{abs q}), respectively.

Then, in a similar way, for $m=k+1$, according to (\ref{important equation of compqd}), (\ref{compneq 3}) in Lemma \ref{rounding error of comp for inner step}, (\ref{compqd err for em with 2}), (\ref{compqd err for q(m+1) with 2}) and (\ref{abs e}), we have
\begin{equation}\label{compqd err for em+1}
\begin{split}
|\widehat{e}_{k+1}^{(n)}-\widehat{\epsilon e}_{k+1}^{(n)}-e_{k+1}^{(n)}|&\leq\bigg\{\gamma_3\gamma_4+\bigg(\prod_{i=0}^{k}{D_i}\bigg)\gamma_{11k+2}^2(1+\gamma_3\gamma_4)\bigg\}|\bar{e}_{k+1}^{(n)}|\\
&\leq\bigg(\prod_{i=0}^{k}{D_i}\bigg)\gamma_{11(k+1)-5}\gamma_{11(k+1)-4}|\bar{e}_{k+1}^{(n)}|.
\end{split}
\end{equation}
Next, by considering $\prod\limits_{i=0}^{k}D_i\ll\frac{1}{u}$, from (\ref{compneq 9}) in Lemma \ref{rounding error of comp for inner step}, (\ref{compqd err for q(m+1) with 2}), (\ref{compqd err for em+1}) and (\ref{abs q}), we derive
\begin{equation}\label{compqd per err for qm+2}
\begin{split}
|\bar{\beta}_{k+2}^{(n)}|&\leq{d}_{k+1}^{(n)}\bigg(\prod_{i=0}^{k}{D_i}\bigg)\times\bigg(\gamma_{11k+5}\gamma_{11k+6}\frac{|\bar{e}_{k+1}^{(n+1)}|}{|e_{k+1}^{(n+1)}|}|q_{k+2}^{(n)}|+\gamma_{11k+2}^2\frac{|\bar{q}_{k+1}^{(n+1)}|}{|q_{k+1}^{(n+1)}|}|q_{k+2}^{(n)}|\\
& \qquad +\gamma_{11k+5}\gamma_{11k+6}\frac{|\bar{e}_{k+1}^{(n)}|}{|e_{k+1}^{(n)}|}|q_{k+2}^{(n)}|+\bigg(\prod_{i=0}^{k}{D_i}\bigg)\gamma_{11k+2}^2\gamma_{11k+5}\gamma_{11k+6}\frac{|\bar{q}_{k+1}^{(n+1)}|}{|q_{k+1}^{(n+1)}|}\frac{|\bar{e}_{k+1}^{(n+1)}|}{|e_{k+1}^{(n+1)}|}|q_{k+2}^{(n)}|\bigg)\\
&\leq{\bigg(\prod_{i=0}^{k+1}D_i\bigg)}\gamma_{11k+5}\gamma_{11k+6}|\bar{q}_{k+2}^{(n)}|,
\end{split}
\end{equation}
Hence, from (\ref{important equation of compqd}), (\ref{compneq 9}) in Lemma \ref{rounding error of comp for inner step} and (\ref{compqd per err for qm+2}), we finally obtain
\begin{equation*}\label{compqd err for qm+2}
\begin{split}
|\widehat{q}_{k+2}^{(n)}-\widehat{\epsilon q}_{k+2}^{(n)}-q_{k+2}^{(n)}|&\leq{d_{k+1}^{(n)}}\gamma_7\gamma_8|q_{k+2}^{(n)}|+\bigg(\prod_{i=0}^{k+1}D_i\bigg)\gamma_{11k+5}\gamma_{11k+6}(1+\gamma_7\gamma_8)|\bar{q}_{k+2}^{(n)}|\\
&\leq\bigg(\prod_{i=0}^{k+1}D_i\bigg)\big\{\gamma_7\gamma_8+\gamma_{11k+5}\gamma_{11k+6}(1+\gamma_7\gamma_8)\big\}|\bar{q}_{k+2}^{(n)}|\\
&\leq\bigg(\prod_{i=0}^{k+1}D_i\bigg)\gamma_{11(k+1)+2}^2|\bar{q}_{k+2}^{(n)}|.
\end{split}
\end{equation*}
And therefore, by induction we obtain the result.
\end{proof}

We remark that if $c_i\in\mathbb{F}$ for $i=0,1,2\ldots$ in Algorithm \ref{Algor compqd}, as described in (\ref{floating inputs of compqd}) in Section \ref{sec 4 comp}, the perturbations of inputs $\widehat{q}_1^{(n)}$ and $\widehat{\epsilon q}_1^{(n)}$ in {\tt Compqd} are slightly different. From Theorem \ref{EFT div dingli}, we have
\begin{equation*}
q_1^{(n)}=\frac{c_{n+1}}{c_n}=\widehat{q}_1^{(n)}+\frac{r}{c_n},
\end{equation*}
and
\begin{equation*}
\bigg|\frac{r}{c_n}\bigg|\leq{u}\bigg|\frac{c_{n+1}}{c_n}\bigg|=u|q_1^{(n)}|.
\end{equation*}
Thus, we obtain that
\begin{equation}\label{compqd err for q1def}
|\widehat{q}_1^{(n)}-\widehat{\epsilon q}_1^{(n)}-q_1^{(n)}|=\bigg|\frac{r}{c_n}-r\oslash{c_n}\bigg|\leq\gamma_1\bigg|\frac{r}{c_n}\bigg|\leq{u}\gamma_1|q_1^{(n)}|.
\end{equation}
In a similar way as the proof of Theorem \ref{rounding error bounds of comp qd}, using (\ref{compqd err for q1def}) instead of (\ref{compqd err for q1}), we can obtain the forward rounding error bounds of $\widehat{e}_m^{(n)}-\widehat{\epsilon e}_m^{(n)}$ and $\widehat{q}_{m+1}^{(n)}-\widehat{\epsilon q}_{m+1}^{(n)}$ in {\tt Compqd} as
\begin{equation*}\label{compqd err for em of floating-point coefficient}
|\widehat{e}_m^{(n)}-\widehat{\epsilon e}_m^{(n)}-e_m^{(n)}|\leq\bigg(\prod_{i=0}^{m-1}D_i\bigg)\times\gamma_{11m-5}\gamma_{11m-6}|\bar{e}_m^{(n)}|,
\end{equation*}
and
\begin{equation*}\label{compqd err for q(m+1) of floating-point coefficient}
|\widehat{q}_{m+1}^{(n)}-\widehat{\epsilon q}_{m+1}^{(n)}-q_{m+1}^{(n)}|\leq\bigg(\prod_{i=0}^{m}D_i\bigg)\times\gamma_{11m+1}^2|\bar{q}_{m+1}^{(n)}|.
\end{equation*}

Finally, we will give the forward rounding error bounds of $\widehat{e}_m^{(n)}$ and $\widehat{q}_{m+1}^{(n)}$ updated in {\tt Compqd} in Theorem~\ref{rounding error bounds of comp qd}.

\begin{theorem}\label{rounding error bounds of comp qd}
The forward rounding error bounds for the {\tt Compqd} algorithm, in the real coefficients case ($c_i\in\mathbb{R}$), are given by
\begin{equation*}\label{compqd err for em}
|\widehat{e}_m^{(n)}-e_m^{(n)}|\leq{u}|e_m^{(n)}|+\bigg(\prod_{i=0}^{m-1}D_i\bigg)\times\gamma_{11m-4}^2|\bar{e}_m^{(n)}|,
\end{equation*}
and
\begin{equation*}\label{compqd err for q(m+1)}
|\widehat{q}_{m+1}^{(n)}-q_{m+1}^{(n)}|\leq{u}|q_{m+1}^{(n)}|+\bigg(\prod_{i=0}^{m}D_i\bigg)\times\gamma_{11m+2}\gamma_{11m+3}|\bar{q}_{m+1}^{(n)}|,
\end{equation*}
with $D_i$ defined in (\ref{def D}) and supposing $\prod\limits_{i=1}^{m}D_i\ll\frac{1}{u}$, and where $\bar{e}_m^{(n)}$ and $\bar{q}_{m+1}^{(n)}$ are defined in Definition~\ref{condition number}.
\end{theorem}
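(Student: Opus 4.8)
The plan is to reduce everything to Theorem~\ref{rounding error bounds of comp qd with 2}, which already bounds the error of the double-word pair \emph{before} the renormalizing {\tt FastTwoSum} step, and then to show that this final update perturbs the accumulated result only at the level of the rounding unit $u$. First I would record precisely what the last {\tt FastTwoSum} in each inner loop does to the outputs. Applying {\tt FastTwoSum} to $(\widehat{e}_m^{(n)},-\widehat{\epsilon e}_m^{(n)})$ returns a new pair, again denoted $(\widehat{e}_m^{(n)},-\widehat{\epsilon e}_m^{(n)})$; by Theorem~\ref{EFT dingli} the difference $\widehat{e}_m^{(n)}-\widehat{\epsilon e}_m^{(n)}$ is left unchanged (this is exactly the invariance noted after~(\ref{important equation of compqd})), while the renormalized low-order part satisfies $|\widehat{\epsilon e}_m^{(n)}|\le u\,|\widehat{e}_m^{(n)}-\widehat{\epsilon e}_m^{(n)}|$. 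The analogous statement holds for the $q$-output, with $|\widehat{\epsilon q}_{m+1}^{(n)}|\le u\,|\widehat{q}_{m+1}^{(n)}-\widehat{\epsilon q}_{m+1}^{(n)}|$. Working with the post-update values throughout is what keeps the argument clean, since the quantity controlled by Theorem~\ref{rounding error bounds of comp qd with 2} is invariant under the update.

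The core estimate is then a single triangle inequality on the final output,
\[
|\widehat{e}_m^{(n)}-e_m^{(n)}|\le\big|(\widehat{e}_m^{(n)}-\widehat{\epsilon e}_m^{(n)})-e_m^{(n)}\big|+|\widehat{\epsilon e}_m^{(n)}|.
\]
The first term is bounded directly by~(\ref{compqd err for em with 2}). For the second term I would use the {\tt FastTwoSum} bound above together with $|\widehat{e}_m^{(n)}-\widehat{\epsilon e}_m^{(n)}|\le|e_m^{(n)}|+\big|(\widehat{e}_m^{(n)}-\widehat{\epsilon e}_m^{(n)})-e_m^{(n)}\big|$, and insert~(\ref{compqd err for em with 2}) once more. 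Collecting the two contributions yields $u\,|e_m^{(n)}|+(1+u)\big(\prod_{i=0}^{m-1}D_i\big)\gamma_{11m-5}\gamma_{11m-4}\,|\bar{e}_m^{(n)}|$. The $q$-case is entirely symmetric, using~(\ref{compqd err for q(m+1) with 2}) in place of~(\ref{compqd err for em with 2}), and produces $u\,|q_{m+1}^{(n)}|+(1+u)\big(\prod_{i=0}^{m}D_i\big)\gamma_{11m+2}^2\,|\bar{q}_{m+1}^{(n)}|$.

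The last step, and the only one needing care, is absorbing the stray factor $(1+u)$ into the $\gamma$-indices so as to match the stated constants. For this I would invoke the elementary inequality $(1+u)\gamma_k=\gamma_k+u\gamma_k\le\gamma_k+u\le\gamma_{k+1}$, which holds whenever $\gamma_k\le1$ (and hence throughout the regime $\prod_i D_i\ll 1/u$ assumed in the statement), combined with the monotonicity of $k\mapsto\gamma_k$. This gives $(1+u)\gamma_{11m-5}\gamma_{11m-4}\le\gamma_{11m-4}^2$ and $(1+u)\gamma_{11m+2}^2\le\gamma_{11m+2}\gamma_{11m+3}$, which are exactly the factors appearing in the theorem. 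I expect the main difficulty to be purely bookkeeping rather than conceptual: one must confirm that the raw index emerging from the triangle inequality sits exactly one unit below the target, so that a single application of $(1+u)\gamma_k\le\gamma_{k+1}$ closes the gap, and that the leading $u\,|e_m^{(n)}|$ (respectively $u\,|q_{m+1}^{(n)}|$) term is kept separate rather than folded into the condition-number factor. Since this final renormalization only adds an $\mathcal{O}(u)$ relative term on top of a bound that is already quadratic in $u$ up to the condition factor, it does not degrade the second-order accuracy that distinguishes {\tt Compqd} from {\tt qd}.
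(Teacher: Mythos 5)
Your proposal is correct and follows essentially the same route as the paper's own proof: the same triangle inequality splitting off $\widehat{\epsilon e}_m^{(n)}$ (resp.\ $\widehat{\epsilon q}_{m+1}^{(n)}$), the same double use of Theorem~\ref{rounding error bounds of comp qd with 2} via the {\tt FastTwoSum} invariance (\ref{important equation of compqd}) and the bound $|\widehat{\epsilon e}_m^{(n)}|\leq u|\widehat{e}_m^{(n)}-\widehat{\epsilon e}_m^{(n)}|$, and the same final absorption of the $(1+u)$ factor by bumping one $\gamma$-index, i.e.\ $(1+u)\gamma_{11m-5}\gamma_{11m-4}\leq\gamma_{11m-4}^2$ and $(1+u)\gamma_{11m+2}^2\leq\gamma_{11m+2}\gamma_{11m+3}$.
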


\begin{proof}

In {\tt Compqd} algorithm, by using {\tt FastTwoSum} to update the result, from Theorem \ref{EFT dingli},  and (\ref{important equation of compqd}), we have
\begin{equation}\label{inequation e}
|\widehat{\epsilon e}_m^{(n)}|\leq{u}|\widehat{e}_m^{(n)}-\widehat{\epsilon e}_m^{(n)}|\leq{u}|e_m^{(n)}-\epsilon\epsilon e_m^{(n)}|\leq{u}(|e_m^{(n)}|+|\epsilon\epsilon{e}_m^{(n)}|).
\end{equation}
Thus, by (\ref{compqd err for em with 2}) in Theorem \ref{rounding error bounds of comp qd with 2} and (\ref{inequation e}), we obtain
\begin{equation*}
\begin{split}
|\widehat{e}_m^{(n)}-e_m^{(n)}|\leq&|\widehat{e}_m^{(n)}-\widehat{\epsilon{e}}_m^{(n)}-e_m^{(n)}|+|\widehat{\epsilon e}_m^{(n)}|\\
\leq&u|e_m^{(n)}|+(1+u)\bigg(\prod_{i=0}^{m-1}D_i\bigg)\times\gamma_{11m-5}\gamma_{11m-4}|\bar{e}_m^{(n)}|\\
\leq&u|e_m^{(n)}|+\bigg(\prod_{i=0}^{m-1}D_i\bigg)\times\gamma_{11m-4}^2|\bar{e}_m^{(n)}|.
\end{split}
\end{equation*}

Similarly, by (\ref{compqd err for q(m+1) with 2}) in Theorem \ref{rounding error bounds of comp qd with 2}, we obtain
\begin{equation*}
\begin{split}
|\widehat{q}_{m+1}^{(n)}-q_{m+1}^{(n)}|\leq&|\widehat{q}_{m+1}^{(n)}-\widehat{\epsilon{q}}_{m+1}^{(n)}-q_{m+1}^{(n)}|+|\widehat{\epsilon q}_{m+1}^{(n)}|\\
\leq&u|q_{m+1}^{(n)}|+(1+u)\bigg(\prod_{i=0}^{m}D_i\bigg)\times\gamma_{11m+2}^2|\bar{q}_{m+1}^{(n)}|\\
\leq&u|q_{m+1}^{(n)}|+\bigg(\prod_{i=0}^{m}D_i\bigg)\times\gamma_{11m+2}\gamma_{11m+3}|\bar{q}_{m+1}^{(n)}|.
\end{split}
\end{equation*}
\end{proof}

Therefore, from the condition numbers given in Definition~\ref{condition number}, we obtain directly the relative forward rounding error bounds of the {\tt Compqd} algorithm considering the perturbed inputs:
\begin{corollary}\label{relative error bounds of compqd}
The relative forward rounding error bounds for the {\tt Compqd} algorithm, in the real coefficients case ($c_i\in\mathbb{R}$), are given by
\begin{equation*}\label{qd err bound edef}
\frac{|\widehat{e}_m^{(n)}-e_m^{(n)}|}{|e_m^{(n)}|}\leq{u}+\Psi_{m-1} \, \gamma_{11m-4}^2 \, {\tt cond\_e_m^{(n)}} \equiv
{u}+\Psi_{m-1} \, \mathcal{O}(u^2) \, {\tt cond\_e_m^{(n)}},
\end{equation*}
and
\begin{equation*}\label{qd err bound qdef}
\frac{|\widehat{q}_{m+1}^{(n)}-q_{m+1}^{(n)}|}{|q_{m+1}^{(n)}|}\leq{u}+\Psi_m \, \gamma_{11m+2} \, \gamma_{11m+3} \, {\tt cond\_q_{m+1}^{(n)}} \equiv {u}+\Psi_m \, \mathcal{O}(u^2) \, {\tt cond\_q_{m+1}^{(n)}},
\end{equation*}
supposing
$\Psi_m=\prod\limits_{i=0}^{m}D_i\ll\frac{1}{u}$ and where $D_i$ is defined in (\ref{def D}).
\end{corollary}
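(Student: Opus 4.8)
The plan is to obtain this corollary as an immediate consequence of the absolute forward rounding error bounds established in Theorem~\ref{rounding error bounds of comp qd}, by normalizing each inequality by the magnitude of the exact quantity being approximated and then reading off the condition numbers introduced in Definition~\ref{condition number}. Since all the substantive analytical work has already been carried out in the induction proving Theorem~\ref{rounding error bounds of comp qd}, the corollary is essentially a restatement in relative form.

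Concretely, I would start from the two absolute bounds of Theorem~\ref{rounding error bounds of comp qd}, namely
\begin{equation*}
|\widehat{e}_m^{(n)}-e_m^{(n)}|\leq{u}|e_m^{(n)}|+\bigg(\prod_{i=0}^{m-1}D_i\bigg)\gamma_{11m-4}^2|\bar{e}_m^{(n)}|,
\end{equation*}
and the analogous one for $\widehat{q}_{m+1}^{(n)}$. Assuming $e_m^{(n)}\neq0$ and $q_{m+1}^{(n)}\neq0$ (which holds under the existence conditions for the {\tt qd} scheme assumed throughout the paper), I would divide the first inequality by $|e_m^{(n)}|$ and the second by $|q_{m+1}^{(n)}|$. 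The leading term on each right-hand side produces the term $u$, while the remaining term produces $\Psi_{m-1}\,\gamma_{11m-4}^2\,\bar{e}_m^{(n)}/|e_m^{(n)}|$ and $\Psi_m\,\gamma_{11m+2}\gamma_{11m+3}\,\bar{q}_{m+1}^{(n)}/|q_{m+1}^{(n)}|$, respectively.

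Next, invoking Definition~\ref{condition number}, I would identify $\bar{e}_m^{(n)}/|e_m^{(n)}|={\tt cond\_e_m^{(n)}}$ and $\bar{q}_{m+1}^{(n)}/|q_{m+1}^{(n)}|={\tt cond\_q_{m+1}^{(n)}}$, and recall that $\Psi_m=\prod_{i=0}^{m}D_i$, giving exactly the two displayed relative bounds. Finally, the equivalence ($\equiv$) to the $\mathcal{O}(u^2)$ form follows from Definition~\ref{def1}, since $\gamma_k=ku+\mathcal{O}(u^2)$ implies that both $\gamma_{11m-4}^2$ and $\gamma_{11m+2}\gamma_{11m+3}$ are $\mathcal{O}(u^2)$. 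This is the key qualitative contrast with Corollary~\ref{relative error bounds of qd}, where a single factor $\gamma_{4m}$ or $\gamma_{4m+2}$ leaves the condition number multiplied only by $\mathcal{O}(u)$; here the compensation replaces that single $\gamma$ by a product of two $\gamma$'s, demoting the influence of the condition numbers to second order in $u$.

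There is essentially no substantive obstacle at this stage, as the difficult induction lives entirely in Theorem~\ref{rounding error bounds of comp qd}. The only points requiring care are the tacit nonvanishing of $e_m^{(n)}$ and $q_{m+1}^{(n)}$ needed to form the relative errors, together with the standing hypothesis $\Psi_m\ll 1/u$, which guarantees that the $\gamma$-bounds of Definition~\ref{def1} remain valid and that the accumulated amplification factor $\prod_{i=0}^{m}D_i$ does not overwhelm the $\mathcal{O}(u^2)$ gain.
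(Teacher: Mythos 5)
Your proposal is correct and matches the paper's own treatment: the paper derives this corollary directly from Theorem~\ref{rounding error bounds of comp qd} by dividing the absolute bounds by $|e_m^{(n)}|$ and $|q_{m+1}^{(n)}|$ and identifying the quotients $\bar{e}_m^{(n)}/|e_m^{(n)}|$ and $\bar{q}_{m+1}^{(n)}/|q_{m+1}^{(n)}|$ with the condition numbers of Definition~\ref{condition number}, exactly as you do. Your added remarks on the tacit nonvanishing of the exact quantities and on why the product of two $\gamma$-factors yields the $\mathcal{O}(u^2)$ form are sound and consistent with the paper's conventions.
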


In Corollary \ref{relative error bounds of compqd}, it should be noticed that ${\Psi}_{m-1}\leq {\Psi}_{m}$. Comparing with Corollary~\ref{relative error bounds of qd}, we remark that Corollary \ref{relative error bounds of compqd} states that the {\tt Compqd} algorithm is more accurate than the {\tt qd}. In fact, the effect of the compensated algorithm is
to multiply the condition numbers by $u^2$ instead by $u$ as in Corollary~\ref{relative error bounds of qd}. This fact permits, using {\tt Compqd}, to continue with the {\tt qd} table for more rows than using the standard algorithm as the instabilities given by the condition numbers will appear later. Note that, in opposite to the case of using double-double arithmetic, the rounding unit is always $u$ ($u^2$ in double-double approximately) but the stability is similar.

%-%--------------Numerical experiments------------------%%

\section{Numerical experiments}\label{test}
In this section we study the accuracy, performance and application of the proposed algorithm.
In Subsection~\ref{accuracy}, we compare the accuracy of {\tt qd}, {\tt compqd}  and {\tt DDqd} algorithms ({\tt qd} algorithm in double-double format based on the QD library \cite{Li02,Bailey,HLB}). Meanwhile, we  also present the error bounds of the {\tt qd} and {\tt Compqd} algorithms.
In Subsection~\ref{time compare}, the computational complexity of the above algorithms is given. We also show the performance of {\tt qd}, {\tt Compqd} and {\tt DDqd} in terms of running time.
In Subsection~\ref{application}, we give three simple applications to show the effectiveness of {\tt Compqd}, including the obtention of the coefficients of continued fractions from power series, the search of the poles of meromorphic functions and the zeros of polynomials.
Note that the working precision of our experiments is the standard IEEE-754 double precision. We use the Symbolic Toolbox in \textsc{Matlab} to obtain the `exact' results for comparisons.

\subsection{Accuracy}\label{accuracy}

Firstly, we consider, as test problems, several polynomials  of degree N-1 (with $N=10:7:500$) whose coefficients are random floating-point numbers uniformly distributed in the interval $(-1,1)$.

\begin{figure}[h!]
\begin{center}
\includegraphics[width=1.\textwidth]{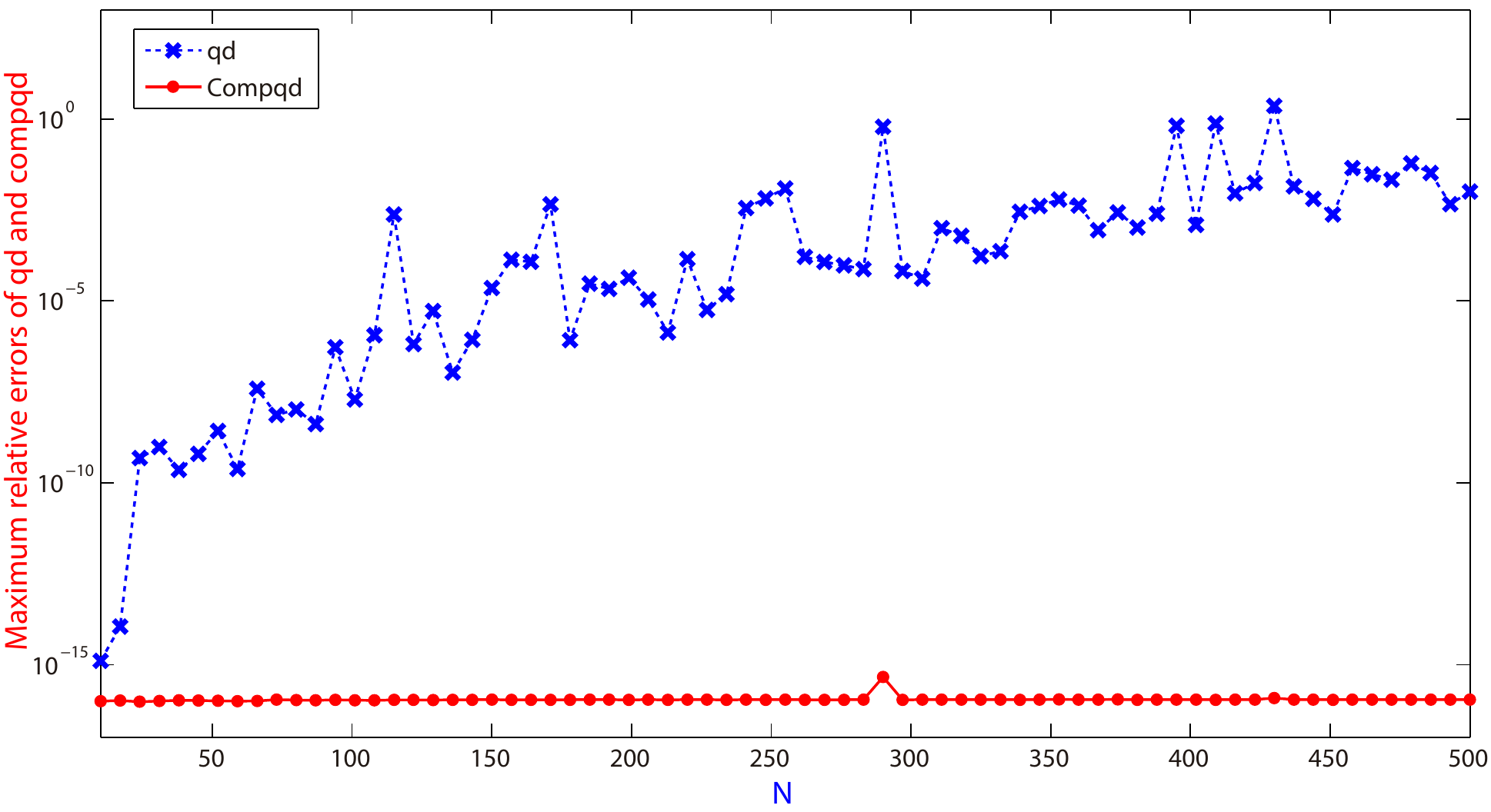}
\end{center}
\caption{Maximum relative errors on the $q$ columns ($\max\limits_{m}\max\limits_{n}\{|q_m^{(n)}-\widehat{q}_m^{(n)}|/|q_m^{(n)}|\}$)  using the {\tt qd} and {\tt Compqd} algorithms. }\label{errrand500}
\end{figure}

In Figure~\ref{errrand500} we consider the relative errors of all the terms $q_m^{(n)}$ for each polynomial of degree $N$.
On the vertical axis  we plot the maximum relative errors on the $q$ columns $\max\limits_{m}\max\limits_{n}\{|q_m^{(n)}-\widehat{q}_m^{(n)}|/|q_m^{(n)}|\}$ for $m=1:[{(N+1)}/{2}]$ and $n=0:N-2m+1$ for $N=10:7:500$ using the {\tt qd} and {\tt Compqd} algorithms.
As we can see, in some cases  the results of {\tt qd} algorithm have no significant digit, and
the relative errors of {\tt qd} increase  with $N$. However, the maximum relative errors of all $q_m^{(n)}$ for each polynomial of degree $N$ computed by the {\tt Compqd} algorithm are in all cases smaller than $10^{-15}$. Hence, {\tt Compqd} is much more stable than {\tt qd} with floating-point inputs.

Next, we consider the forward relative errors of {\tt qd} tables computed by using the {\tt qd}, {\tt Compqd} and {\tt DDqd} (Algorithm~\ref{Algor DDqd} in Appendix A) algorithms. In order to get the inputs of $q_1^{(n)}$, we consider the Taylor polynomial of degree 35 obtained by using the code {\tt Taylor(f(x),N)} in \textsc{Matlab} from the function
\begin{equation}\label{funtay}
\displaystyle{\frac{e^x}{(x-1)(x-2)(x+2)(x-3)}.}
\end{equation}
For accuracy, we approximate the real coefficients $c_i$ of the test polynomial by using double-double numbers $(c_i^{(h)},c_i^{(l)})$
from (\ref{coefficients 3 part}). We use \texttt{double($c_i$)} and \texttt{double($c_i$
-sym(double($c_i$)))} to represent $c_i^{(h)}$ and $c_i^{(l)}$ in \textsc{Matlab}, respectively. The condition numbers of computing $e_m^{(n)}$ and $q_m^{(n)}$ introduced in Definition \ref{condition number} verify the relationship shown in Figure \ref{condqdtable}.  Including the corresponding relative error bounds of {\tt qd} and {\tt Compqd} described in Corollary~\ref{relative error bounds of qd} and Corollary~\ref{relative error bounds of compqd}, the forward relative errors of {\tt qd}, {\tt Compqd} and {\tt DDqd} are reported in Figure~\ref{cond} for computing the terms $e_m^{(n)}$ and $q_{m+1}^{(n)}$, respectively. Here, $\Phi_{17}\approx 273.26$ is the largest $\Phi_{i}$ for $i=0,1,2\cdots,17$, and the same case happens to $\Psi_{17}\approx63.13$ in our numerical test, which means the terms $\Phi_{m}$ and $\Psi_{m}$, in Corollary~\ref{relative error bounds of qd} and Corollary~\ref{relative error bounds of compqd}, respectively, are reasonable in size.

\begin{figure}[h!]
\begin{center}
\includegraphics[width=1.\textwidth]{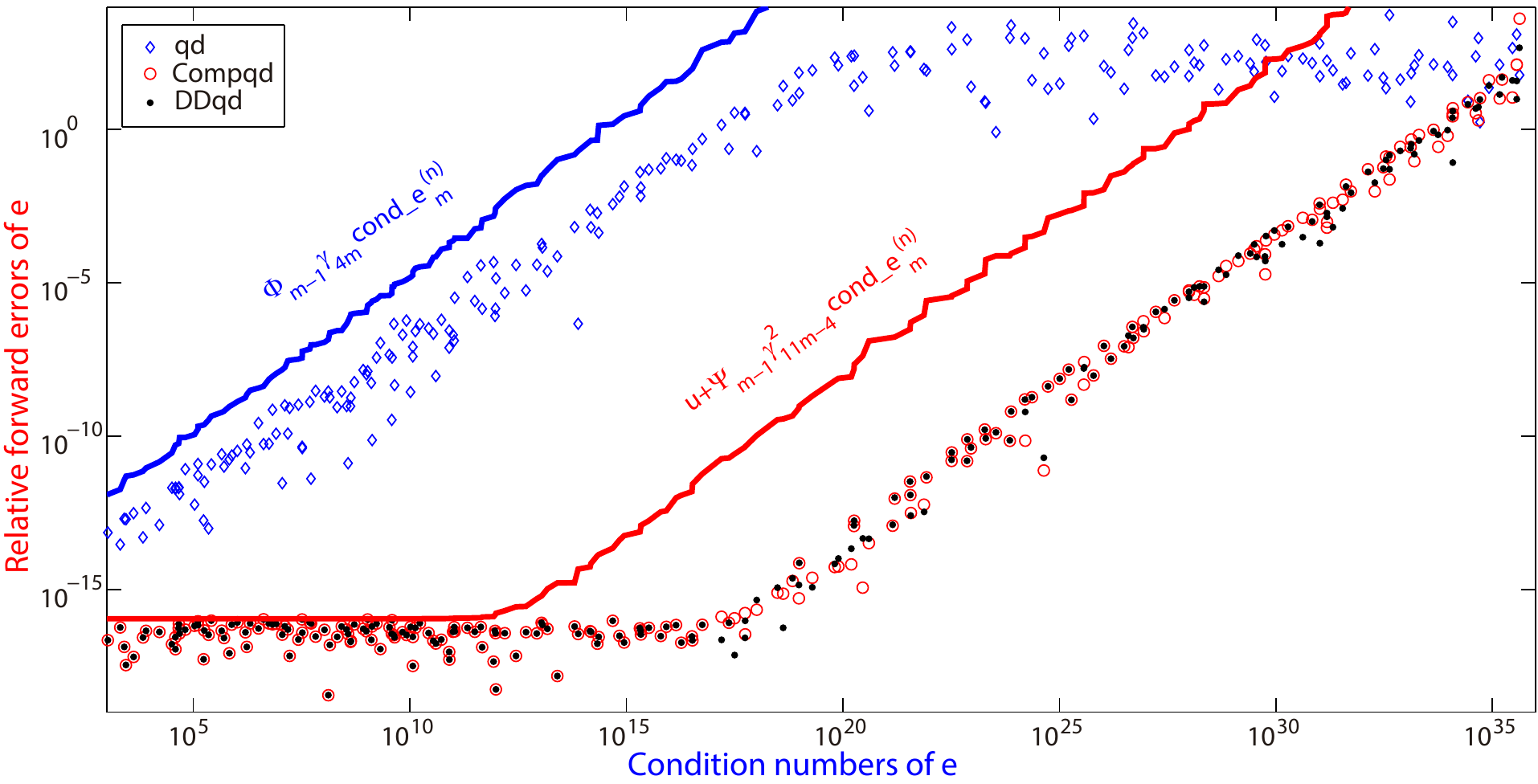}\\[0.5cm]
\includegraphics[width=1.\textwidth]{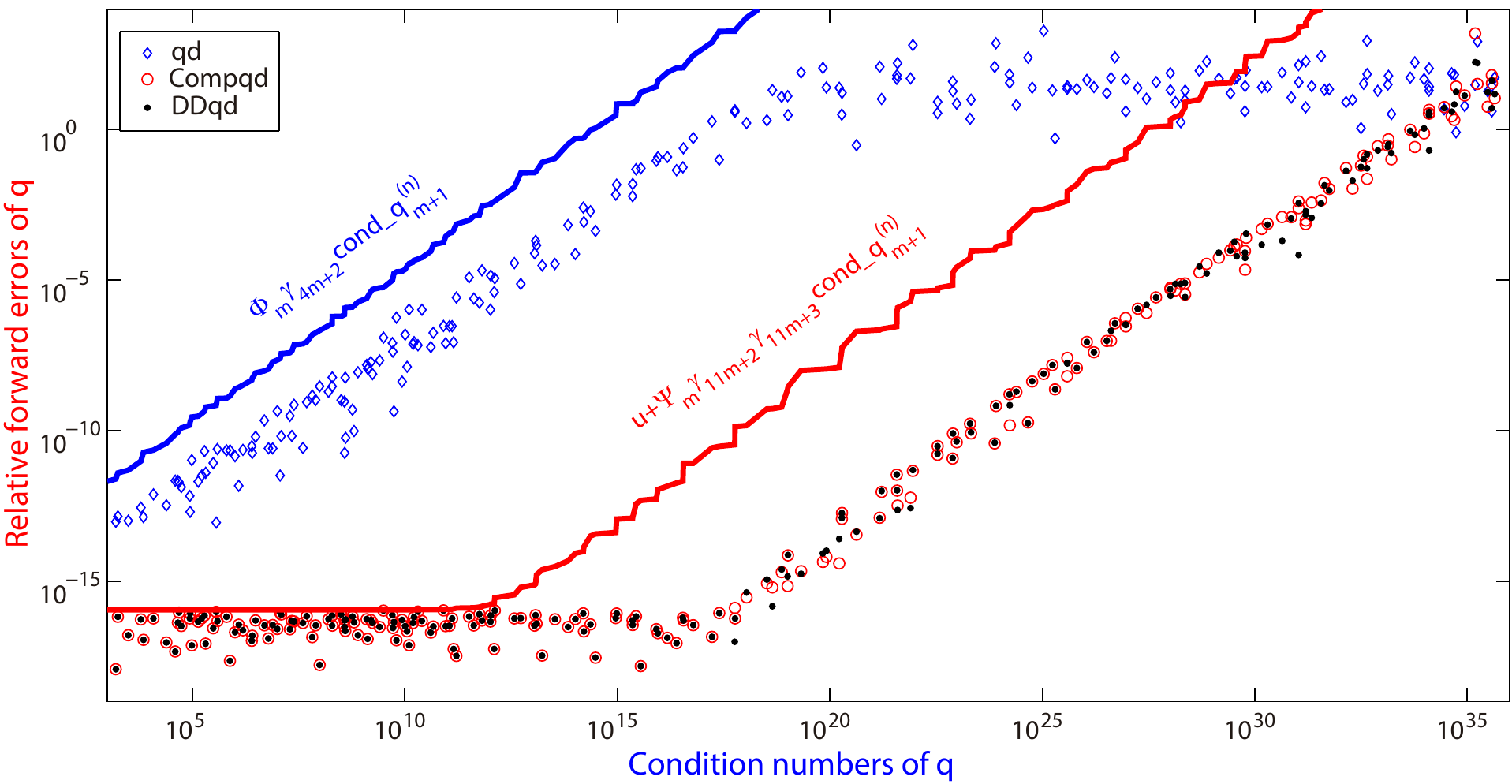}
\end{center}
\caption{Accuracy of terms $e_m^{(n)}$ and $q_{m+1}^{(n)}$ computed with the {\tt qd}, {\tt Compqd} and {\tt DDqd} algorithms with respect to the condition numbers: real errors (dotted points) and theoretical bounds (continuous lines).}\label{cond}
\end{figure}

In Figure \ref{cond}, we can observe that the {\tt qd} algorithm is unstable, and its relative error increases linearly (in logarithmic scale) when the condition number is smaller than ${1}/{u} \approx 10^{16}$.
As expected, {\tt Compqd} is more stable, and its relative errors  are equal to or smaller than the working precision $u$ when the condition number is smaller than ${1}/{u}$, and its relative error increases linearly when the condition numbers are between ${1}/{u}$ and ${1}/{u^2}$. Obviously, when $N$ is extremely large, {\tt Compqd} will not obtain accurate results, but it permits to compute accurately {\tt qd} tables of reasonable size. Moreover, {\tt DDqd} has almost the same accuracy as {\tt Compqd}. We remark the good agreement of the numerical tests and the theoretical bounds obtained in the previous sections (Corollary~\ref{relative error bounds of qd} and Corollary~\ref{relative error bounds of compqd}).

\subsection{Computational complexity and running time performance}\label{time compare}
Another important point is related with the CPU time. In this subsection, we show the computational complexity of the {\tt qd}, {\tt Compqd} and {\tt DDqd} algorithms, together with their practical performance in terms of running time.

Firstly, we assume that
the  initialization of the column $q_1^{(n)}$ requires
 ${\tt F_{input}}$ flops, and computing $e_m^{(n)}$ and $q_{m+1}^{(n)}$ by {\tt qd} scheme (\ref{qd table}) requires ${\tt F_e}$ and ${\tt F_q}$ flops in the inner loop (\ref{inner loop part}), respectively. It is easy to see that ${\tt F_e}={\tt F_q}=2$ and ${\tt F_{input}}=1$ flops in {\tt qd}. From Algorithms \ref{Algor compqd}, \ref{TwoSum}, \ref{FastTwoSum}, \ref{TwoProd}, \ref{TwoDiv} and \ref{Div_dd_dd} (see Appendix~A), we obtain that ${\tt F_e}=19$, ${\tt F_q}=50$ and ${\tt F_{input}}=100$ flops in {\tt Compqd}. Similarly, from Algorithms \ref{add_dd_dd}, \ref{prod_dd_dd}, \ref{Div_dd_dd} and~\ref{Algor DDqd} (see Appendix~A), we have ${\tt F_e}=40$, ${\tt F_q}=124$ and ${\tt F_{input}}=100$ flops in {\tt DDqd}. Then, the computational complexity of all the algorithms needed for computing $e_m^{(n)}$ and $q_{m+1}^{(n)}$ is $m^2{\tt F_e}+m(m-1){\tt F_q}+2m{\tt F_{input}}$ and $m^2{\tt F_q}+m(m+1){\tt F_e}+(2m+1){\tt F_{input}}$ flops, respectively. Hence, we can derive the total computational complexity of the {\tt qd}, {\tt Compqd} and {\tt DDqd} algorithms  for computing $e_m^{(n)}$:
\begin{itemize}
  \item {\tt qd}: $4m^2$ flops,
  \item {\tt Compqd}: $69m^2+150m$ flops,
  \item {\tt DDqd}: $164m^2+76m$ flops,
\end{itemize}
\noindent and for computing $q_{m+1}^{(n)}$:
\begin{itemize}
  \item {\tt qd}: $4m^2+4m+1$ flops,
  \item {\tt Compqd}: $69m^2+219m+100$ flops,
  \item {\tt DDqd}: $164m^2+240m+100$ flops.
\end{itemize}

We have measured the average ratios of the required flops of {\tt Compqd} and {\tt DDqd} over that of {\tt qd} for computing $e_m^{(n)}$ and $q_{m+1}^{(n)}$ for $m=50:5:1000$ in Table \ref{flops and times}. The ratios for computing $e_m^{(n)}$ and $q_{m+1}^{(n)}$ are almost the same. We can observe  that {\tt Compqd} has only 17 times the theoretical complexity of {\tt qd}, while {\tt DDqd} has 41 times. Therefore, although {\tt Compqd} algorithm has nearly the same accuracy as the {\tt DDqd} one, it requires on the average about 42.11\% of flop counts of {\tt DDqd}, and moreover, it is not required to use any different or extended arithmetic in the algorithm. This fact gives one of the greatest advantages of using the {\tt Compqd} algorithm, it has almost double-double precision without using extended arithmetic and it is much faster.

\begin{table}[ht]
\centering
\caption{Theoretical flop count ratios and measured running time ratios for computing $e_m^{(n)}$ and $q_{m+1}^{(n)}$.}\label{flops and times}
\begin{tabular}{c|cc|c}\hline
& ${\tt Compqd}/{\tt qd}$ & ${\tt DDqd}/{\tt qd}$ & ${\tt Compqd}/{\tt DDqd}$ \\\hline
Theoretical & 17.24 & 40.95 & 42.11\% \\
Measured & 3.33 & 12.11 & 27.53\% \\\hline
\end{tabular}
\end{table}

Besides, we compare {\tt qd}, {\tt Compqd} and {\tt DDqd} in terms of measured computing time. The tests are performed on a PC with a Intel(R) Core(TM) i7-4790 processor, with four cores each at 3.60Ghz and 4GB of main memory. The testing environment is under the \texttt{gcc} compiler, version 4.6.3, with the compiler option \texttt{-o2} on Linux Ubuntu 12.04. We generate the test polynomials with random coefficients in the interval $[-1,1]$, whose degrees vary from 50 to 1000 by the step of 5. The average time ratios of {\tt Compqd/qd} and {\tt DDqd/qd} for computing $e_m^{(n)}$ or $q_{m+1}^{(n)}$ are reported in Table \ref{flops and times}. Compared with the theoretical flop ratios, the measured running time ratios are obviously smaller than the theoretical flop count ones. This phenomenon is reasonable because the tested algorithms take benefit from the instruction level parallelism~(ILP) \cite{N.Louvet08, LL2} and the Fused-Multiply-and-Add instruction~(FMA) \cite{PM2000, YN}.
It is reasonable that {\tt Compqd} runs faster than {\tt DDqd} since compensated algorithms present more ILP than the algorithms computed in double-double arithmetic. In fact, now the increment in the measured time is just around 3.3 times, meaning that the
{\tt Compqd} algorithm provides a reasonable accurate version of the {\tt qd} algorithm.

\subsection{Applications}\label{application}
To illustrate the effectiveness and accuracy of {\tt Compqd} in more complex algorithms, we present its use in three simple but important applications.

\subsubsection{Computation of continued fractions}\label{continued fraction}
In literature \cite{CPVWJ} there are several algorithms developed to construct different continued fraction representations or approximations of functions.
The {\tt qd} algorithm constructs C-fractions from formal power series at $x = 0$. Note that a C-fraction is intimately connected with Pad\'e approximants, since its
successive approximants equal Pad\'e approximants on a staircase in the Pad\'e table.

Given a formal power series (\ref{FPS}), there exists precisely one corresponding continued fraction of the form (a regular C-fraction)
\begin{equation}\label{fraction}
f(z)= a_0 + \cfrac{a_1 z}{1+\cfrac{a_2 z}{1+\cfrac{a_3 z}{1+\cdots}}} \equiv a_0 + \vcenter{\hbox{\huge $\mathrm{K}$}}_{m=1}^\infty \left(\frac{a_m z}{1}\right),
\end{equation}
if and only if the Hankel determinants $H_m^{(1)}\neq0$ and $H_m^{(2)}\neq0$ for $m\in\mathbb{N}$ (see more details in \cite{CPVWJ,JT80}).
One algorithm shown  in Theorem \ref{fraction th} to obtain the coefficients of the regular C-fraction (\ref{fraction}) is based directly on the {\tt qd} scheme.
\begin{theorem}\label{fraction th}\cite{JT80}
Let  (\ref{FPS}) be the Taylor series at $z=0$ of a meromorphic function $f(z)$.
If $q_k^{(n)}$ and $e_k^{(n)}$ are the elements of the {\tt qd} scheme associated with $f(z)$, then the coefficients of the continued fraction (\ref{fraction}) corresponding to $f(z)$ are given by
\begin{equation*}\label{fraction relationship}
a_0=c_0, \quad a_1=c_1, \quad a_{2k}=-q_k^{(0)}, \quad a_{2k+1}=-e_k^{(0)}, \indent k\geq1.
\end{equation*}
\end{theorem}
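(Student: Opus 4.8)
The plan is to reduce the identification of the C-fraction coefficients to the Hankel-determinant representation of the {\tt qd} entries already recorded in (\ref{q and e}). Since $q_k^{(n)}$ and $e_k^{(n)}$ are there given as ratios of Hankel determinants $H_m^{(n)}$, it suffices to produce the analogous Hankel-determinant expressions for the coefficients $a_m$ of the regular C-fraction (\ref{fraction}) and then to check that the two families of ratios agree. The hypotheses $H_m^{(1)}\neq 0$ and $H_m^{(2)}\neq 0$ are exactly what guarantee that all the ratios involved are well defined and that the C-fraction does not terminate or degenerate prematurely, so I would keep them in view throughout.

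First I would dispose of the two initial coefficients directly from (\ref{FPS}) and (\ref{fraction}). Evaluating both sides of (\ref{fraction}) at $z=0$ gives $a_0=f(0)=c_0$, and expanding the leading convergent $a_0+a_1 z+\mathcal{O}(z^2)$ and matching the coefficient of $z$ against (\ref{FPS}) forces $a_1=c_1$. These serve as the base step of an induction on $k$.

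For the remaining coefficients I would exploit the correspondence between the regular C-fraction and the power series: the successive convergents $A_m(z)/B_m(z)$ of (\ref{fraction}) are the Pad\'e approximants of $f$ lying on a staircase of the Pad\'e table, so each $B_m$ annihilates the power series to the appropriate order. Inserting the three-term recurrence for $A_m,B_m$ into the linear systems that define the Pad\'e numerators and denominators, and solving them by Cramer's rule, represents every $a_m$ as a ratio of Hankel determinants $H_m^{(n)}$. Comparing these ratios term by term with the expressions (\ref{q and e}) for $q_k^{(\cdot)}$ and $e_k^{(\cdot)}$ then delivers the stated relations, the required determinant cancellations being precisely those encoded in the Hankel identity (\ref{Hankel connection}) and, equivalently, in the rhombus rules (\ref{add q and e}) and (\ref{product q and e}). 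A more self-contained alternative is a direct induction that \emph{peels} the C-fraction off $f$ one level at a time, subtracting the constant term and inverting the remainder; one then shows that the leading coefficients of the resulting tail series satisfy exactly the {\tt qd} addition and product rules, and hence must coincide with the entries of the {\tt qd} scheme read off the top of the table (\ref{qd table}).

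The main obstacle is the middle step: rigorously establishing the Hankel-determinant formula for the C-fraction coefficients. This needs the full order-of-contact argument placing the convergents on the Pad\'e staircase, together with the Frobenius/Jacobi determinant identities for the numerators and denominators, and genuine care with the degenerate configurations in which some $H_m^{(n)}$ vanishes, the very cases ruled out by the non-vanishing hypotheses on $H_m^{(1)}$ and $H_m^{(2)}$. Once that representation is secured, the concluding match against (\ref{q and e}) is a routine determinant manipulation driven by (\ref{Hankel connection}), and the induction closes.
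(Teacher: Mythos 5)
First, a point of comparison: the paper itself contains no proof of this statement. Theorem \ref{fraction th} is quoted from Jones and Thron \cite{JT80} and used as a black box in Subsection \ref{continued fraction}, so there is no internal argument to measure your proposal against; it can only be judged against the classical literature. On that score, your outline does follow the standard route (correspondence of the C-fraction convergents with Pad\'e approximants on a staircase of the Pad\'e table, Hankel-determinant formulas for the partial numerators obtained from the Pad\'e linear systems via Cramer's rule, then comparison with the ratios (\ref{q and e})), and that structure is sound. But as written it is a plan rather than a proof: the step you yourself flag as ``the main obstacle'' --- the Hankel-determinant representation of the coefficients $a_m$, equivalently the order-of-contact argument for the convergents --- is precisely where the entire content of the theorem lives, and it is named but never carried out. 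The only part actually executed is the trivial base case $a_0=c_0$, $a_1=c_1$.

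Second, there is a concrete reason you should have pushed your low-order check one step further: it exposes an indexing snag that your concluding ``term by term'' comparison would run into. Matching the coefficient of $z^2$ in (\ref{fraction}) against (\ref{FPS}) gives $c_2=-a_1a_2$, hence $a_2=-c_2/c_1$. Under this paper's conventions (Algorithm \ref{Algor qd} takes $q_1^{(n)}=c_{n+1}/c_n$, so $q_1^{(0)}=c_1/c_0$), this equals $-q_1^{(1)}$, not $-q_1^{(0)}$: for $c_0=c_1=1$, $c_2=2$ one gets $a_2=-2$ while $-q_1^{(0)}=-1$. So a correct execution of your plan yields $a_{2k}=-q_k^{(1)}$ and $a_{2k+1}=-e_k^{(1)}$ in the paper's indexing; the relations as printed hold only if the qd scheme ``associated with $f$'' is understood, as in \cite{JT80}, to be the scheme of the shifted series $c_1+c_2z+\cdots$, whose row $n=0$ is this paper's row $n=1$. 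Your proposal asserts that the comparison ``delivers the stated relations,'' which it does not do verbatim; a complete proof must either adopt and state that shifted convention or prove the superscript-$(1)$ identity. (Relatedly, when you set up the Hankel formulas you will need to fix the determinant convention as well: the display defining $H_m^{(n)}$ in the paper is an $(m+1)\times(m+1)$ determinant, which is inconsistent with $q_1^{(n)}=c_{n+1}/c_n$; the $m\times m$ convention with $H_0^{(n)}=1$, $H_1^{(n)}=c_n$ is the one that makes (\ref{q and e}) and Algorithm \ref{Algor qd} coherent.)
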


\begin{figure}[h!]
\begin{center}
\includegraphics[width=1.\textwidth]{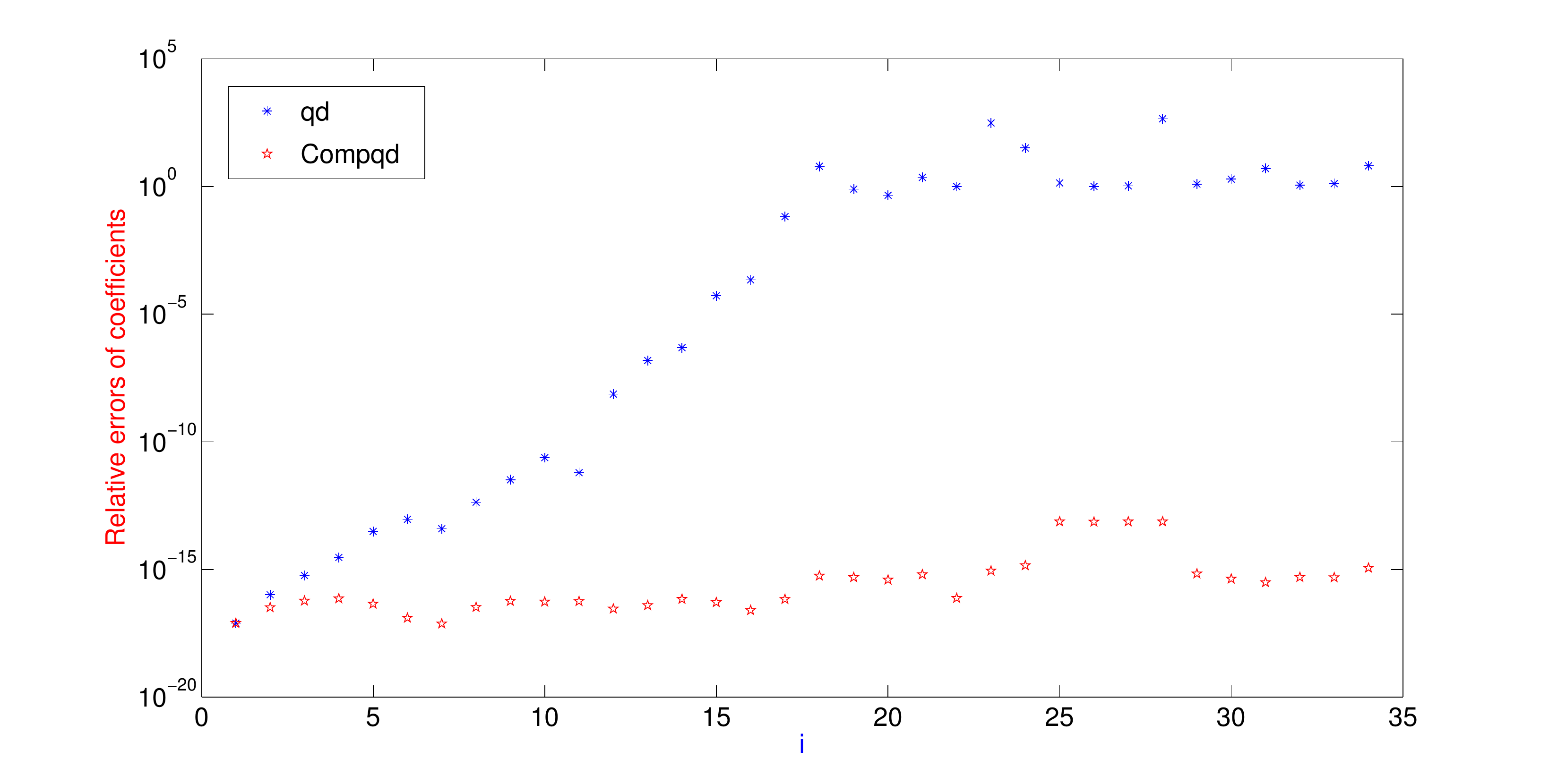}
\end{center}
\caption{Relative errors of the coefficients $a_i$ of the regular C-fraction computed using the {\tt qd} and {\tt Compqd} algorithms for the Taylor series of degrees 35 of the function (\ref{funtay}).}\label{Cfraction}
\end{figure}

We continue the second experiment of Subsection \ref{accuracy} by using again the Taylor polynomial of degree 35 with floating-point coefficients of the function (\ref{funtay}). Applying {\tt qd} and {\tt Compqd} algorithms to Theorem~\ref{fraction th}, we present the relative errors of the coefficients of the regular C-fraction in Figure \ref{Cfraction}. As we can see, the relative errors of the coefficients $a_i$ computed using {\tt qd} increase with $i$. However, the coefficients computed by {\tt Compqd} are much more accurate, and their maximum relative error is near to the rounding unit.

\subsubsection{Poles of meromorphic functions}\label{polessec}
The {\tt qd} algorithm can be used for the determination of poles of a meromorphic function $f(z)$ \cite{AC2010,rutishauser}. Considering the formal power series (\ref{FPS}) expansion of the function $f(z)$ with $z \in \mathbb{C}$, given the Hankel determinants (\ref{HDet}) associated with this series we say that the power series (\ref{FPS}) is ``$k$-normal'' if $H_m^{(n)} \not= 0$ for $m=0, 1, \ldots, k$ and $n \geq 0$. It is called ``ultimately $k$-normal'' if for every $0 \leq m \leq k$ there exists an $n(m)$ such that $H_m^{(n)} \not= 0$ for $n> n(m)$.
Classical results from complex analysis give basic algorithms for the location of poles in meromorphic functions:

\begin{theorem}\label{poles}\cite{Henrici,CUYT1997}
Let  (\ref{FPS}) be the Taylor series at $z=0$ of a meromorphic function $f(z)$ in the disk $B(0, R) = \{z \, : \, |z|<R\}$ and let the poles $z_i$ of $f \in B(0, R)$ be ordered such that
$$z_0 = 0 < |z_1| \leq |z_2| \leq \ldots < R,$$
each pole occurring as many times in the sequence $\{z_i\}_{i \in \mathbb{N}}$ as indicated by its order. If $f$ is ultimately $k$-normal for some integer $k>0$, then the {\tt qd} scheme associated with $f$ has the following properties:
\begin{itemize}
\item[(a)] For each $m$ with $0<m\leq k $ and $|z_{m-1}| < |z_m| < |z_{m+1}|$ where $z_0=0$ and $z_{m+1}=\infty$ if $f$ has only $m$ poles, we have $\lim\limits_{n \rightarrow \infty} q_m^{(n)} = z_m^{-1}$;
\item[(b)] For each $m$ with $0<m\leq k $ and $|z_m| < |z_{m+1}|$, we have $\lim\limits_{n \rightarrow \infty} e_m^{(n)} = 0$.
\end{itemize}
\end{theorem}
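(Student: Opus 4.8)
The plan is to reduce both statements to a single asymptotic estimate for the Hankel determinants and then read off the limits from the determinantal representations (\ref{q and e}). Concretely, I would first prove that whenever $|z_m| < |z_{m+1}|$ there is a nonzero constant $C_m$ with
\[
H_m^{(n)} = C_m\,(z_1 z_2 \cdots z_m)^{-n}\bigl(1+o(1)\bigr), \qquad n \to \infty,
\]
the poles being counted with multiplicity. Once this is in hand, both parts follow by substitution, since (\ref{q and e}) expresses $q_m^{(n)}$ and $e_m^{(n)}$ as quotients of four Hankel determinants each.

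To establish the Hankel asymptotics I would start from the partial-fraction structure of $f$: writing $f$ as the sum of its principal parts plus a function holomorphic on $B(0,R)$, the Taylor coefficients of (\ref{FPS}) satisfy $c_n = \sum_j P_j(n)\,z_j^{-n} + \rho_n$, where $P_j$ is a polynomial of degree one less than the order of $z_j$ and the holomorphic remainder obeys $\limsup_n|\rho_n|^{1/n}\le 1/R$, hence is strictly subdominant to every pole term because $|z_j|<R$. Substituting this into the Hankel matrix $[c_{n+i+j}]$ factors it, through a (confluent) Vandermonde matrix $V$ built from the $z_j^{-1}$, as $V\,\Lambda^{(n)}\,V^{\!\top}$ with $\Lambda^{(n)}$ diagonal; Cauchy--Binet then writes $H_m^{(n)}$ as a sum over size-$m$ index sets $S$ of terms proportional to $\bigl(\prod_{l\in S} z_l\bigr)^{-n}$. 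The ordering $|z_1|\le\cdots\le|z_m|<|z_{m+1}|\le\cdots$ makes $S=\{1,\dots,m\}$ the unique minimizer of $\prod_{l\in S}|z_l|$, hence the strictly dominant term, which yields the displayed estimate; nonvanishing of $C_m$ comes from the ultimate $k$-normality together with nonvanishing of the Vandermonde and of the leading residue factors.

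For part (a), under the two strict gaps $|z_{m-1}|<|z_m|<|z_{m+1}|$ both $H_{m-1}^{(n)}$ and $H_m^{(n)}$ have clean leading terms, so setting $r_m^{(n)}:=H_m^{(n+1)}/H_m^{(n)}\to(z_1\cdots z_m)^{-1}$ and $r_{m-1}^{(n)}\to(z_1\cdots z_{m-1})^{-1}$ gives
\[
q_m^{(n)}=\frac{r_m^{(n)}}{r_{m-1}^{(n)}}\longrightarrow\frac{(z_1\cdots z_m)^{-1}}{(z_1\cdots z_{m-1})^{-1}}=z_m^{-1}.
\]
For part (b) only the single gap $|z_m|<|z_{m+1}|$ is assumed; substituting the determinant asymptotics into $e_m^{(n)}=H_{m+1}^{(n)}H_{m-1}^{(n+1)}/(H_m^{(n)}H_m^{(n+1)})$ and cancelling the factors attached to the indices $\le m-1$ leaves a leading behaviour proportional to $z_m\,(z_m/z_{m+1})^{n}$, which tends to $0$ since $|z_m/z_{m+1}|<1$. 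This mechanism is robust: even if $|z_{m-1}|=|z_m|$ or $|z_{m+1}|=|z_{m+2}|$ spoils the clean asymptotics of the neighbouring determinants, matching upper and lower bounds of the same exponential order still force the quotient to carry the decaying factor $(z_m/z_{m+1})^n$, so the strict gaps are genuinely needed only in (a).

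The hard part is the Hankel estimate itself. The delicate points are handling poles of higher order (which bring in confluent Vandermonde blocks and polynomial-in-$n$ prefactors), controlling the tail poles and the holomorphic remainder $\rho_n$ so that they stay strictly subdominant \emph{in the determinant} rather than merely in the individual coefficients, and verifying that the leading coefficient $C_m$ is genuinely nonzero. This last point is exactly where ultimate $k$-normality enters, ruling out accidental cancellation of the dominant term; the remaining manipulations in (a) and (b) are then purely algebraic.
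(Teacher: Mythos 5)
The paper does not prove this theorem at all: it is stated with the citations to Henrici and to Cuyt (1997) precisely because it is a classical result, and it is used only as a black box in the pole-detection application of Section~\ref{polessec}, so there is no internal proof to compare against. Measured against the classical argument in those references, your outline is faithful and correct: partial fractions give $c_n=\sum_j P_j(n)z_j^{-n}+\rho_n$, the key lemma is $H_m^{(n)}=C_m(z_1\cdots z_m)^{-n}\bigl(1+o(1)\bigr)$ with $C_m\neq 0$ whenever $|z_m|<|z_{m+1}|$, and both limits follow by substitution into (\ref{q and e}). Two points of emphasis deserve correction. First, ultimate $k$-normality is not what makes $C_m\neq 0$: the gap $|z_m|<|z_{m+1}|$ already forces $z_1,\dots,z_m$ to consist of complete multiplicity groups (a split group would place copies of the same pole at positions $m$ and $m+1$, forcing $|z_m|=|z_{m+1}|$), and then $C_m$ is a product of confluent Vandermonde factors at distinct nodes and of leading coefficients of the $P_j$, all automatically nonzero; what normality actually buys is that the Hankel determinants in the denominators of (\ref{q and e}) are eventually nonzero, so that the qd entries generated by the rhombus rules exist for large $n$ and coincide with the determinant ratios --- without it the assertion $q_m^{(n)}\to z_m^{-1}$ is not even well posed. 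Second, in part (b) your phrase ``matching upper and lower bounds'' overstates what is needed: lower bounds on the numerator determinants are irrelevant, since the upper bounds $\limsup_n|H_{m+1}^{(n)}|^{1/n}\le|z_1\cdots z_{m+1}|^{-1}$ and $\limsup_n|H_{m-1}^{(n)}|^{1/n}\le|z_1\cdots z_{m-1}|^{-1}$ (valid in the presence of modulus ties, polynomial prefactors being harmless) together with the exact exponential rate of the denominator $H_m^{(n)}H_m^{(n+1)}$ already give $\limsup_n|e_m^{(n)}|^{1/n}\le|z_m|/|z_{m+1}|<1$. The same computation with $|z_{m+1}|$ replaced by any $R'<R$ covers the convention $z_{m+1}=\infty$ when $f$ has only $m$ poles, a case your sketch omits; and, as you correctly identify, the genuinely technical work is keeping $\rho_n$ and the tail poles subdominant inside the determinant, which is done by expanding the Hankel determinant multilinearly over ``pole rows'' and ``remainder rows'' and bounding each piece.
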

Therefore, there are three steps for locating poles by using Theorem \ref{poles}:
\begin{itemize}
\item \emph{Step 1:} Expand the function in Taylor series;
\item \emph{Step 2:} Obtain the complete {\tt qd} table from the coefficients of the Taylor series;
\item \emph{Step 3:} Use Theorem \ref{poles} to locate poles.
\end{itemize}

As test example, we consider the function
\begin{equation}\label{funtay2}
\displaystyle{\frac{e^x}{(x-1)(x-2)(x-3)(x-4)}.}
\end{equation}
 Firstly, we compute the Taylor series development (around $x=0$) of degree $N$ of the
  the function (\ref{funtay2}). In this test, we consider four degrees $N=24, 34, 44$ and $54$, and note that function (\ref{funtay2}) is obviously ultimately 4-normal. Then, we use {\tt qd} and {\tt Compqd} to obtain the complete {\tt qd} table. From Theorem \ref{poles}, the first pole computed by {\tt qd} in all the four cases are the same as those of {\tt Compqd} since they can be obtained by only one division operation. The other three poles are reported in Table \ref{poles compare 1}.
The {\tt qd} algorithm using exact arithmetic in the \textsc{Matlab} symbolic toolbox, {\tt Symqd}, based on Theorem \ref{poles} gives the results without any rounding error. We can observe that  {\tt Compqd} is more accurate than {\tt qd}, and its results are almost the same as those computed by {\tt Symqd}. We find that in the cases $N=34,44,54$ the last two poles obtained using {\tt qd}  have no significant digit, that means {\tt qd} can not deal accurately with the pole location. However, {\tt Compqd} can still get the poles.

We remark that when $N=54$ {\tt Compqd} can not find the last pole. To improve the performance of the algorithms in locating poles, we use the alternative method proposed in~\cite{CUYT1997}, and
we combine it with {\tt Compqd}  to find poles. Defining that any index $m$ such that the strict inequality $|z_m|<|z_{m+1}|$ holds is called a \emph{critical index}, the {\tt qd} scheme can determine the poles of a meromorphic function $f$ directly from its Taylor series using the methodology described in Theorem \ref{find poles}.

\begin{table}[ht]
\centering
\caption{The poles of Taylor polynomials of degree $N$  expanded from Eq. (\ref{funtay2}) obtained from Theorem \ref{poles} and using the {\tt qd}, {\tt Compqd} and {\tt Symqd} algorithms.}\label{poles compare 1}
\begin{tabular}{c|c|cccc}\hline
&Methods & Second pole & Third pole & Fourth pole \\\hline\hline
&{\tt qd} & 1.999360212952655 & 2.993928981359646 & 4.000508014082992 \\
N=24&{\tt Compqd} & 1.999360213958358 & 2.993916792495087 & 4.019757154976143 \\
&{\tt Symqd} & 1.999360213958358 & 2.993916792495087 & 4.019757154976143 &  \\\hline
&{\tt qd} & 1.999988303398561 & - & - \\
N=34&{\tt Compqd} & 1.999988805384870 & 2.999576789137349 & 4.001093405615016 \\
&{\tt Symqd} & 1.999988805384870 & 2.999576789137349 & 4.001093405610383 &  \\\hline
&{\tt qd} & 2.000958313366616 & - & - \\
N=44&{\tt Compqd} & 1.999999805766010 & 2.999974706425002 & 4.000063369400147 \\
&{\tt Symqd} & 1.999999805766010 & 2.999974706426370 & 4.000061511186811 &  \\\hline
&{\tt qd} & 1.999999999999998 & - & - \\
N=54&{\tt Compqd} & 1.999999996631584 & 2.999998762542696 & - \\
&{\tt Symqd} & 1.999999996631584 & 2.999998550118171 & 4.000003463711180 &  \\\hline
\end{tabular}
\end{table}

\begin{table}[ht]
\centering
\caption{The poles of Taylor polynomials of degree $N$ expanded from Eq. (\ref{funtay2}) obtained from Theorem \ref{find poles} and using the {\tt qd}, {\tt Compqd} and {\tt Symqd} algorithms.}\label{poles compare 2}
\begin{tabular}{c|c|cccc}\hline
&Methods & Second pole & Third pole & Fourth pole \\\hline\hline
&{\tt qd} & 1.999999109742843 & 2.999437417806726 & 4.002118134645123 \\
N=24&{\tt Compqd} & 1.999999129884058 & 2.999452305326862 & 4.001220145895098 \\
&{\tt Symqd} & 1.999999129884058 & 2.999452305326858 & 4.001220145895103 &  \\\hline
&{\tt qd} & 2.000000275935389 & 2.993218480452075 & - \\
N=34&{\tt Compqd} & 1.999999999984543 & 2.999999453378646 & 4.000001214856552 \\
&{\tt Symqd} & 1.999999999984540 & 2.999999453378657 & 4.000001214856524 &  \\\hline
&{\tt qd} & 1.999964072650627 & - & - \\
N=44&{\tt Compqd} & 2.000000000000001 & 2.999999999461029 & 4.000000079545716 \\
&{\tt Symqd} & 2.000000000000000 & 2.999999999465995 & 4.000000001186681 &  \\\hline
&{\tt qd} & - & - & - \\
N=54&{\tt Compqd} & 2.000000000000006 & 3.000000042940265 & 3.989674221270899 \\
&{\tt Symqd} & 2.000000000000000 & 2.999999999999479 & 4.000000000001159 &  \\\hline
\end{tabular}
\end{table}

\begin{theorem}\label{find poles}\cite{CUYT1997}
Let $m$ and $m+j$ with $j>1$ be two consecutive critical indices and let $f$ be $(m+j)$-normal. Let polynomials $p_k^{(n)}$ be defined by
\begin{equation*}
\begin{split}
p_0^{(n)}(z)=&1\\
p_{k+1}^{(n)}(z)=&zp_k^{(n+1)}(z)-q_{m+k+1}^{(n)}p_k^{(n)}(z),  \indent n\geq0,  \indent k=0,1,\cdots, j-1.
\end{split}
\end{equation*}
Then there exits a subsequence $\{n(\ell)\}_{\ell\in\mathbb{N}}$ such that
\begin{equation*}
\lim\limits_{\ell\longrightarrow \infty}p_j^{n(\ell)}(z)=(z-z_{m+1}^{-1})\cdots(z-z_{m+j}^{-1}).
\end{equation*}
\end{theorem}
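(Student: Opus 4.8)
The plan is to prove the statement in three stages: (i) give a closed determinantal representation of the block Hadamard polynomials $p_k^{(n)}$, (ii) insert the Hadamard asymptotics of the Hankel determinants $H_m^{(n)}$ of (\ref{HDet}) that hold in the equal-modulus block, and (iii) extract a subsequence along which the oscillating phase factors stabilize. Throughout, the $(m+j)$-normality hypothesis guarantees that every Hankel determinant appearing in a denominator is nonzero, so all the objects below are well defined. Note also that ``$m$ and $m+j$ are consecutive critical indices'' means precisely $|z_m| < |z_{m+1}| = \cdots = |z_{m+j}| < |z_{m+j+1}|$, so the $j$ poles of the block share a common modulus $\rho = |z_{m+1}|$ while $z_1,\dots,z_m$ are strictly smaller and $z_{m+j+1},\dots$ strictly larger.

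First I would establish, by induction on $k$, that the monic degree-$k$ polynomial $p_k^{(n)}(z)$ admits a Hankel-determinant representation whose coefficients are ratios of the determinants $H_{m+i}^{(n)}$. The induction step substitutes the inductive forms of $p_k^{(n+1)}$ and $p_k^{(n)}$ together with the ratio definition (\ref{q and e}) of $q_{m+k+1}^{(n)}$ into the defining recurrence, and then collapses the result using a Sylvester/Jacobi determinant identity --- the same identity underlying the Hankel connection (\ref{Hankel connection}). The outcome I expect is that the coefficient of $z^{j-\ell}$ in $p_j^{(n)}(z)$ equals, up to sign, an explicit ratio of Hankel determinants formed from the series coefficients $c_n, c_{n+1},\dots$ of (\ref{FPS}), i.e.\ $p_j^{(n)}$ is a bordered-determinant (Hadamard) polynomial attached to the block that follows the critical index $m$.

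The analytic heart is the asymptotic evaluation of those Hankel ratios. Writing the tail of (\ref{FPS}) through its partial-fraction contributions $c_k \sim \sum_i B_i z_i^{-(k+1)}$, each Hankel determinant $H_r^{(n)}$ factors as a Gram-type product and expands, by Cauchy--Binet over the pole contributions (a generalized Vandermonde expansion), into a sum over size-$r$ choices of poles, each term carrying the weight $\prod_{i\in S} B_i z_i^{-(n+1)}$ times a squared Vandermonde factor in the $z_i^{-1}$. For the determinants of size near $m+k$ with $0\le k\le j$, the modulus gaps force the dominant terms to be exactly those that select $z_1,\dots,z_m$ together with $k$ of the block poles; every other choice is smaller by a definite geometric factor and is $o(1)$ after normalization. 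Feeding this into the formula of the first stage, the factors attached to $z_1,\dots,z_m$ cancel between numerator and denominator (this is the arithmetic content of $m$ being a critical index), and the coefficient of $z^{j-\ell}$ reduces, up to a unimodular factor, to the $\ell$-th elementary symmetric function of $\{z_{m+1}^{-1},\dots,z_{m+j}^{-1}\}$.

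Finally I would handle the oscillation. Since the block poles have the common modulus $\rho$ but distinct arguments, the surviving unimodular factors of the form $\prod (z_{m+i}/|z_{m+i}|)^{-n}$ lie on a fixed compact torus and need not converge as $n\to\infty$; this is exactly why only a subsequence is asserted. By compactness I would extract $\{n(\ell)\}$ along which every such phase converges, and along it all oscillating contributions stabilize, so the coefficient limits become precisely the elementary symmetric functions of $\{z_{m+1}^{-1},\dots,z_{m+j}^{-1}\}$, giving $\lim_{\ell\to\infty} p_j^{(n(\ell))}(z) = (z-z_{m+1}^{-1})\cdots(z-z_{m+j}^{-1})$. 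The main obstacle is the third stage's estimates: controlling the generalized-Vandermonde expansion in the degenerate equal-modulus regime and proving that every sub-dominant selection --- both permutations within the block and contamination from poles beyond $z_{m+j}$ --- is uniformly $o(1)$ after normalization, so that no spurious term survives in the limit. The case of multiple poles, where the partial-fraction expansion carries polynomial-in-$k$ prefactors, will require an extra but routine sharpening of these bounds.
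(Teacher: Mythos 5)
First, a point of reference: the paper never proves Theorem \ref{find poles}. It is quoted verbatim from \cite{CUYT1997} and used as a black box in the pole-location experiments of Subsection \ref{polessec}, so your proposal cannot be compared with a proof in the paper, only with the classical argument the citation points back to (Hadamard \cite{hadamard}, Henrici \cite{Henrici}, Cuyt \cite{CUYT1997}). Measured against that, your three-stage plan is the correct route and essentially the standard one: the $p_k^{(n)}$ are the Hadamard polynomials of the block, they admit bordered Hankel-determinant representations built from the coefficients of (\ref{FPS}) via the rhombus rules and (\ref{q and e}); the Hankel determinants expand, through the partial-fraction form of the $c_k$ and a Cauchy--Binet/generalized Vandermonde sum over selections of poles, into dominant terms forced to contain $z_1,\ldots,z_m$; and your reading of ``consecutive critical indices'' as $|z_m|<|z_{m+1}|=\cdots=|z_{m+j}|<|z_{m+j+1}|$ is exactly right.

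However, what you have written is a plan rather than a proof, and the step you defer to the end (``controlling the generalized-Vandermonde expansion in the degenerate equal-modulus regime'') is not a technical sharpening but the entire content of the theorem; everything before it is bookkeeping. Two concrete gaps. First, stage (iii) is too weak as stated: if the coefficient of $z^{j-\ell}$ is a unimodular factor times the elementary symmetric function plus $o(1)$, then along a subsequence on which the phases merely \emph{converge} the limit is the phase-twisted symmetric function, not the symmetric function itself. You must either exhibit a cancellation of these phases between numerator and denominator of your determinant ratios (this is what actually happens in the classical treatment, where the same oscillating sum multiplies both), or choose the subsequence so that every phase tends to $1$, which needs the pigeonhole/difference refinement (pass to $n_{k+1}-n_k$ so that $w^{n_{k+1}-n_k}\to 1$) rather than bare compactness. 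Second, for $0<k<j$ the intermediate determinants $H_{m+k}^{(n)}$ have \emph{several} dominant terms of equal modulus, so after normalization they are oscillating sums that can come arbitrarily close to zero for some $n$; $(m+j)$-normality gives nonvanishing but no lower bound. Since the quantities $q_{m+k+1}^{(n)}$ entering your recurrence carry these determinants in their denominators, the coefficients of $p_j^{(n)}$ need not even be bounded along an arbitrary subsequence, so the compactness argument you invoke presupposes precisely what must be proved: that the subsequence can be chosen to keep all intermediate oscillating sums bounded away from zero while simultaneously stabilizing the phases. This joint selection is the real work behind the ``there exists a subsequence'' in the statement, and the sketch does not confront it.
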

There are four steps for locating poles by Theorem \ref{find poles}:
\begin{itemize}
\item \emph{Step 1:} Expand the function in Taylor series;
\item \emph{Step 2:} Obtain the incomplete {\tt qd} table (columns $q$) from the coefficients of Taylor series;
\item \emph{Step 3:} Use Theorem \ref{find poles} to generate a polynomial;
\item \emph{Step 4:} Solve the generated polynomial equation $p_j^{(n)}(z)=0$.
\end{itemize}

 Now, we consider again the Taylor polynomials of degree $N=24, 34, 44$ and $54$ expanded from function~(\ref{funtay2}), and we apply {\tt qd} and {\tt Compqd} algorithms to obtain the incomplete {\tt qd} table. From Theorem~\ref{find poles}, we derive that the first pole is $z_1 \simeq 1/q_1^{(N-1)}$.
 We consider the location of the other three poles and so $j=3$, $m=1$. It is obvious that function  (\ref{funtay2}) is  4-normal. Then, from Theorem \ref{find poles}, we can generate a polynomial
  \begin{equation*}
 \begin{split}
 p_3^{(N-9)}(z)=&z^3-\big(q_2^{(N-7)}+q_3^{(N-8)}+q_4^{(N-9)}\big)z^2+\big(q_2^{(N-8)}q_3^{(N-8)}+q_2^{(N-8)}q_4^{(N-9)}+\\
 &  q_3^{(N-9)}q_4^{(N-9)}\big)z-q_2^{(N-9)}q_3^{(N-9)}q_4^{(N-9)},
 \end{split}
 \end{equation*}
such that the reciprocals of the zeros of this polynomial are the three poles of (\ref{funtay2}).
Here, we use the code \texttt{solve(f)} in \textsc{Matlab} to find the zeros of $p_3^{(N-9)}(z)$. The first pole computed by {\tt qd} and {\tt Compqd} is the same, the other three poles are presented in Table \ref{poles compare 2}. As we can see, the results in Table~\ref{poles compare 2} are more accurate than in Table~\ref{poles compare 1}. Moreover, the three poles computed by using {\tt Compqd} are similar to the poles obtained by {\tt Symqd}. In the case $N=54$, the results by using {\tt qd} have no significant digit, but {\tt Compqd} can still keep some accuracy.

\subsubsection{Zeros of polynomials}\label{zeros}
The {\tt qd} algorithm can be used to find simultaneously all the zeros of a polynomial with real coefficients~\cite{HW1965}.
We consider the formal power series (\ref{FPS}) of degree $k$. Its zeros $z_m$ $(m=1,2,\dots,k)$ can be found as the poles of the rational function $r(z)=f(z)^{-1}$. From Theorem \ref{poles}, if the moduli of the zeros of $f(z)$ are all different, then the $m$-th $q$-column of $r(z)$ tends to $z_m^{-1}$ when the $m$-th $e$-column tends to zero. Let $f^*(z)=z^kf(z^{-1})$, then considering $r^*(z)=f^*(z{)}^{-1}$, $q$-columns of $r^*(z)$ tend to the reciprocals of the zeros of $f^*(z)$, which are the zeros of $f(z)$.

The \emph{progressive} form of {\tt qd} scheme \cite{rutishauser}, which is more suitable for this problem, can be used to find zeros. For a current detailed analysis of this algorithm and several modifications see \cite{AC2010}. The progressive {\tt qd} algorithm ({\tt proqd}) and its compensated algorithm ({\tt Compproqd}), are presented in Appendix~B. The {\tt qd} table of {\tt proqd} is built as follows.
\begin{equation*}\label{pro qd table}
\begin{array}{ccccccccc}
     &q_1^{(0)} & & q_2^{(-1)}& & q_3^{(-2)} & &\cdots &  \\
     0 & &e_1^{(0)} & & e_2^{(-1)}&  &\cdots  & & 0\\
     &q_1^{(1)} & & q_2^{(0)}& & q_3^{(-1)} & &\cdots & \\
     0 & &e_1^{(1)} & & e_2^{(0)}& & \cdots  & & 0\\
      &\vdots & &\vdots &\cdots &  & &  \\
     \vdots & & \vdots& \cdots& &  & &  \\
  \end{array}
\end{equation*}

In the numerical test, we consider the Laguerre orthogonal polynomial (see \cite{Szego}) of degree 35 defined by the three-term recurrence relation
\begin{equation*}\label{laguerre}
\begin{cases}
L_0(x)=1,\\
L_1(x)=1-x,\\
L_{k+1}(x)=\left(-\dfrac{1}{k+1}x+\dfrac{2k+1}{k+1}\right)L_k(x)-\dfrac{k}{k+1}L_{k-1}(x) & (k=1,2,3\cdots).
\end{cases}
\end{equation*}

\begin{figure}[h!]
\begin{center}
\includegraphics[width=1.\textwidth]{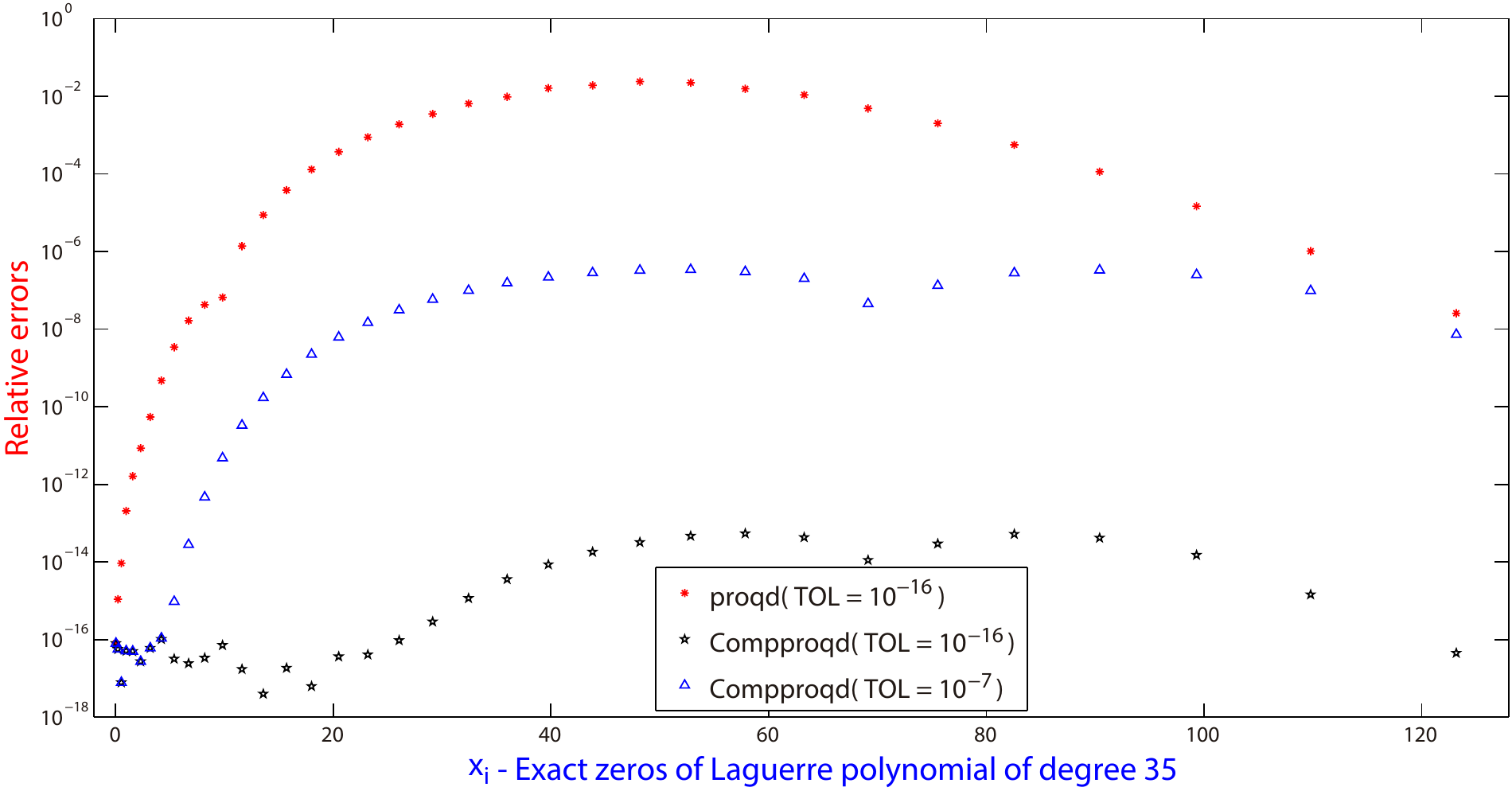}
\end{center}
\caption{Relative errors of the zeros of the Laguerre polynomial of degree 35 computed by using {\tt proqd} ($\texttt{TOL}=10^{-16}$) and {\tt Compproqd} ($\texttt{TOL}=10^{-7}$ and $\texttt{TOL}=10^{-16}$). On the horizontal axis we show the position $x_i$ of the zeros.}\label{zeros compare}
\end{figure}

We apply {\tt proqd} and {\tt Compproqd} with $\texttt{TOL}=10^{-16}$  to locate the zeros (\texttt{TOL} is the error tolerance to stop the iterative process of the algorithm).
For comparison, in the symbolic method, we use $\texttt{TOL}=10^{-33}$. The relative errors of zeros computed by {\tt proqd} and {\tt Compproqd} are reported in Figure \ref{zeros compare}. We observe that the relative errors of zeros computed by using {\tt proqd} are larger than those computed by using {\tt Compproqd}. That is to say, the {\tt Compproqd} algorithm is more stable and its relative error results are close to the rounding unit. Besides, we  test {\tt Compproqd} with $\texttt{TOL}=10^{-7}$, which requires a much smaller number of iterations  than  {\tt Compproqd} and {\tt proqd} with $\texttt{TOL}=10^{-16}$. Moreover, {\tt Compproqd} with $\texttt{TOL}=10^{-7}$ even runs faster than {\tt proqd} with $\texttt{TOL}=10^{-16}$. Note that we can also obtain the zeros with the required accuracy (e.g. half of the working precision) faster, just by fixing a smaller error tolerance $\texttt{TOL}$. We remark that this method can be combined with a Newton method to refine the approximate zeros, once we have a good initial data for the Newton process obtained from the {\tt Compproqd} algorithm.

%-%-----------------------------------------------------------------------Conclusions-------------------------------------------------------------------------------------%%
\section{Conclusions}
In this paper, we have studied in detail the quotient-difference (\texttt{qd}) algorithm, giving a complete analysis of its stability by providing forward rounding error bounds. In the  error analysis we have introduced new condition numbers adapted to the problem. Although it is well-known that the quotient-difference algorithm can be unstable, the theoretical bounds shown in this paper provide with a rigorous theoretical statement. Instead of using high-precision arithmetic or exact (symbolic) arithmetic to overcome this problem, as recommended in numerous papers, we introduce a new more accurate algorithm, the compensated quotient-difference ({\tt Compqd}) algorithm based on error-free transformations. This new algorithm can yield, in most cases, a full precision accuracy in working precision. The stability of the new method is studied and the forward rounding error bounds show that the effect of the compensated algorithm is to multiply the condition numbers by the square of the rounding unit, delaying  significantly the appearance of instability problems in standard precision. The advantages of the compensated quotient-difference algorithm are shown in several examples and in three practical applications: in the obtention of continued fractions and in pole and zero detection.

%-%--------Appendix------

\section*{Appendix A}\label{app}
The QD (quad-double) package \cite{Bailey,HLB} is based on the following algorithms:

\begin{algor}\cite{Knuth98}\label{TwoSum} Error-free transformation of the sum of two floating-point numbers\\
\indent~~~~function $[x, y] = {\tt TwoSum}(a, b)$\\
\indent~~~~~~~~$x = a \oplus b$\\
\indent~~~~~~~~$z = x \ominus a$\\
\indent~~~~~~~~$y= (a \ominus (x \ominus z)) \oplus (b \ominus z)$
\end{algor}
Algorithm \ref{TwoSum} requires $6$ flops.

\begin{algor}\cite{Dekker71}\label{FastTwoSum} Error-free transformation of the sum of two floating-point numbers ($|a|\geq |b|$)\\
\indent~~~~function $[x, y] = {\tt FastTwoSum}(a, b)$\\
\indent~~~~~~~~$x = a \oplus b$\\
\indent~~~~~~~~$y= (a \ominus x) \oplus b$
\end{algor}
Algorithm \ref{FastTwoSum} requires $3$ flops.

\begin{algor}\cite{Dekker71}\label{Split} Error-free split of a floating-point numbers into two parts \\
\indent~~~~function $[x, y] = {\tt Split}(a)$\\
\indent~~~~~~~~$c = {\tt factor} \otimes a$ \quad (in double precision {\tt factor} = $2^{27}+1$)\\
\indent~~~~~~~~$x = c \ominus (c \ominus a)$\\
\indent~~~~~~~~$y = a \ominus x$
\end{algor}
Algorithm \ref{Split} requires $4$ flops.

\begin{algor}\cite{Dekker71}\label{TwoProd} Error-free transformation of the product of two floating-point numbers\\
\indent~~~~function $[x, y] = {\tt TwoProd}(a, b)$\\
\indent~~~~~~~~$x = a \otimes b$\\
\indent~~~~~~~~{[$a1$, $a2$]} = {\tt Split}($a$)\\
\indent~~~~~~~~{[$b1$, $b2$]} = {\tt Split}($b$)\\
\indent~~~~~~~~$y = a2 \otimes b2 \ominus (((x \ominus a1 \otimes b1) \ominus a2 \otimes b1) \ominus a1 \otimes b2)$
\end{algor}
Algorithm \ref{TwoProd} requires $17$ flops.

\begin{algor}\cite{N.Louvet08,M.Pichat93}\label{TwoDiv} Error-free transformation of the division of two floating-point numbers\\
\indent~~~~function $[q, r ] = {\tt DivRem}(a, b)$\\
\indent~~~~~~~~$q = a\oslash b$\\
\indent~~~~~~~~$[x, y] = {\tt TwoProd}(q, b)$\\
\indent~~~~~~~~$r = (a\ominus x)\ominus y$
\end{algor}
Algorithm \ref{TwoDiv} requires $20$ flops.

\begin{algor}\cite{HLB,N.Louvet08}\label{add_dd_d} Addition of a double-double number and a double
number\\
\indent~~~~function $[rh,rl] = \texttt{add\_dd\_d}(ah, al, b)$\\
\indent~~~~~~~~$[th,tl] = \texttt{TwoSum} (ah, b)$\\
\indent~~~~~~~~$tl = al\oplus tl$\\
\indent~~~~~~~~$[rh,rl] = \texttt{FastTwoSum}(th,tl)$
\end{algor}
Algorithm \ref{add_dd_d} requires $10$ flops.

\begin{algor}\cite{HLB,Li02}\label{add_dd_dd} Addition of a double-double number and double-double
number\\
\indent~~~~function $[ rh, rl ] = \texttt{add\_dd\_dd} (ah, al, bh,bl)$\\
\indent~~~~~~~~$[sh,sl] = \texttt{TwoSum} (ah, bh)$\\
\indent~~~~~~~~$[th,tl] = \texttt{TwoSum} (al, bl)$\\
\indent~~~~~~~~$sl=sl\oplus th$\\
\indent~~~~~~~~$th= sh\oplus sl$\\
\indent~~~~~~~~$sl= sl\ominus(th\ominus sh)$\\
\indent~~~~~~~~$tl= tl\oplus sl$\\
\indent~~~~~~~~$[rh,rl] = \texttt{FastTwoSum}(th,tl)$
\end{algor}
Algorithm \ref{add_dd_dd} requires $20$ flops.

\begin{algor}\cite{HLB,N.Louvet08}\label{prod_dd_d} Multiplication of a double-double number by a double
number\\
\indent~~~~function $[rh,rl] = \texttt{prod\_dd\_d}(ah, al, b)$\\
\indent~~~~~~~~$[th,tl] = \texttt{TwoProd} (ah, b)$\\
\indent~~~~~~~~$tl = al\otimes b\oplus tl$\\
\indent~~~~~~~~$[rh,rl] = \texttt{FastTwoSum}(th,tl)$
\end{algor}
Algorithm \ref{prod_dd_d} requires $22$ flops.

\begin{algor}\cite{HLB,GLL1}\label{prod_dd_dd} Multiplication of two double-double numbers\\
\indent~~~~function $[rh,rl] = \texttt{prod\_dd\_dd}(ah, al, bh,bl)$\\
\indent~~~~~~~~$[th,tl] = \texttt{TwoProd} (ah, bh)$\\
\indent~~~~~~~~$ tl=(ah\otimes bl)\oplus (al\otimes bh)\oplus tl$\\
\indent~~~~~~~~$[rh,rl] = \texttt{FastTwoSum}(th,tl)$
\end{algor}
Algorithm \ref{prod_dd_dd} requires $24$ flops.

\begin{algor}\cite{Bailey}\label{Div_dd_dd} Division of two double-double numbers\\
\indent~~~~function $[rh, rl] = {\tt Div\_dd\_dd}(ah,al,bh,bl)$\\
\indent~~~~~~~~$q_1=ah/bh$\\
\indent~~~~~~~~$[th,tl]={\tt prod\_dd\_d}(bh,bl,q_1)$\\
\indent~~~~~~~~$[rh,rl]={\tt add\_dd\_dd} (ah, al, -th,-tl)$\\
\indent~~~~~~~~$q_2=rh/bh$\\
\indent~~~~~~~~$[th,tl]={\tt prod\_dd\_d}(bh,bl,q_2)$\\
\indent~~~~~~~~$[rh,rl]={\tt add\_dd\_dd} (rh, rl, -th,-tl)$\\
\indent~~~~~~~~$q_3=rh/bh$\\
\indent~~~~~~~~$[q_1, q_2] ={\tt FastTwoSum}(q_1, q_2)$\\
\indent~~~~~~~~$[rh, rl] = {\tt add\_dd\_d}(q_1, q_2, q_3)$
\end{algor}
Algorithm \ref{Div_dd_dd} requires $100$ flops.
\medskip

\noindent The double-double arithmetic version of the {\tt qd} algorithm, is the {\tt DDqd} algorithm, and it is given by
\begin{algor}{\tt DDqd} algorithm \label{Algor DDqd} ({\tt qd} algorithm in double-double arithmetic)\\
\indent~~{\bf input:}~~{${eh}_0^{(n)}=0$, $el_0^{(n)}=0$,  $n=1,2,...$ \\
\indent\indent~~~~~~~~$[{qh}_1^{(n)},ql_1^{(n)}]={\tt Div\_dd\_dd}(c^{(h)}_{n+1},c^{(l)}_{n+1},c^{(h)}_n,c^{(l)}_n)$, $n=0,1,...$}\\
\indent~~{\bf output:}~~{qd scheme}\\
\indent~~{\bf for} ~{$m= 1, 2, ...$}\\
\indent~~\indent{\bf for} {$n= 0, 1, ...$}\\
 \indent~~\indent~~~~    $[ rh, rl ] = \texttt{add\_dd\_dd} (qh_m^{(n+1)}, ql_m^{(n+1)}, -qh_m^{(n)},-ql_m^{(n)})$\\
 \indent~~\indent~~~~    $[ eh_m^{(n)}, el_m^{(n)} ] = \texttt{add\_dd\_dd} ( rh, rl,eh_{m-1}^{(n+1)},el_{m-1}^{(n+1)})$\\
  %$q_{m+1}^{(n)}=\frac{e_{m}^{(n+1)}}{e_{m}^{(n)}}q_{m}^{(n+1)}$
 \indent~~\indent~~~~    $[th, tl] = {\tt Div\_dd\_dd}(eh_{m}^{(n+1)},el_{m}^{(n+1)},eh_{m}^{(n)},el_{m}^{(n)})$\\
 \indent~~\indent~~~~    $[qh_{m+1}^{(n)}, ql_{m+1}^{(n)}] = {\tt prod\_dd\_dd}(th, tl,qh_{m}^{(n+1)},ql_{m}^{(n+1)})$\\
\indent~~\indent{\bf end}\\
\indent~~{\bf end}
\end{algor}
Algorithm \ref{Algor DDqd} requires $164$ flops in the inner loop.

\section*{Appendix B}
The \emph{progressive} form of the {\tt qd} scheme \cite{rutishauser} is given by:\\

\begin{algor}{\tt proqd} algorithm \label{Algor proqd} (The \emph{progressive} form of {\tt qd} algorithm)\\
\indent~~{\bf input:}~~{$q_m^{(-m+1)}=0$, $m=2,3,...$; $q_1^{(0)}=-\displaystyle{\frac{b_1}{b_0}}$,\\
\indent\indent~~~~~~~~$e_m^{(-m+1)}=\displaystyle{{b_{m+1}}/{b_m}}$, $e_0^{(m)}=0$, $e_k^{(m-k)}=0$, $m=1,2,...$}\\
\indent\indent~~~~~~~~\texttt{TOL} (error tolerance)\\
\indent~~{\bf output:}~~{qd scheme}\\
\indent~~{\bf for} {$m+n=  1, 2,  ...$}\\
\indent~~\indent{\bf for} {$m= 1, 2, ...$}\\
 \indent~~\indent~~~~  $q_m^{(n+1)}=e_m^{(n)}-e_{m-1}^{(n+1)}+q_{m}^{(n)}$\\
 \indent~~\indent~~~~  $e_{m}^{(n+1)}=\displaystyle{({q_{m+1}^{(n)}}/{q_{m}^{(n+1)}}}) \times e_{m}^{(n)}$\\
\indent~~\indent{\bf end}\\
\indent~~\indent{\bf if}\\
\indent~~\indent~~~~ $\max\limits_{m+n}\{e_m^{(n+1)}\}\leq \texttt{TOL}$, {\bf break}\\
\indent~~\indent{\bf end}\\
\indent~~{\bf end}
\end{algor}

The new compensated version of the \emph{progressive} form of the {\tt qd} scheme is given by:\\

\begin{algor}{\tt Compproqd} algorithm \label{Algor Compproqd} (The compensated {\tt proqd} algorithm)\\
\indent~~{\bf input:}~~{$\widehat{q}_m^{(-m+1)}=0$, $\widehat{\epsilon q}_m^{(-m+1)}=0$, $\widehat{e}_0^{(m-1)}=0$, $\widehat{\epsilon e}_0^{(m-1)}=0$, $m=2,3,...$; \\
\indent\indent~~~~~~~~$[\widehat{q}_1^{(0)},-\widehat{\epsilon q}_1^{(0)}]={\tt Div\_dd\_dd}(-b^{(h)}_{1},-b^{(l)}_{1},b^{(h)}_0,b^{(l)}_0)$\\
\indent\indent~~~~~~~~$[\widehat{e}_m^{(-m+1)},-\widehat{\epsilon e}_m^{(-m+1)}]={\tt Div\_dd\_dd}(b^{(h)}_{m+1},b^{(l)}_{m+1},b^{(h)}_m,b^{(l)}_m)$, $\widehat{e}_k^{(m-k)}=0$, $\widehat{\epsilon e}_k^{(m-k)}=0$, $m=1,2,...$}\\
\indent\indent~~~~~~~~\texttt{TOL} (error tolerance)\\
\indent~~{\bf output:}~~{qd scheme}\\
\indent~~{\bf for} ~{$m+n=  1, 2, ...$}\\
\indent~~\indent{\bf for} {$m= 1, 2, ...$}\\
  \indent~~\indent~~~~  $[s,\mu_1]={\tt TwoSum}(\widehat{e}_m^{(n)},-\widehat{e}_{m-1}^{(n+1)})$\\
 \indent~~\indent~~~~  $[\widehat{q}_m^{(n+1)},\mu_2]={\tt TwoSum}(s,\widehat{q}_{m}^{(n)})$\\
 \indent~~\indent~~~~  $ \widehat{\epsilon q}_m^{(n+1)}=\mu_1\oplus\mu_2\oplus\widehat{\epsilon e}_m^{(n)}\ominus\widehat{\epsilon e}_{m-1}^{(n+1)}\oplus\widehat{\epsilon q}_{m}^{(n)}$\\
  \indent~~\indent~~~~ $[\widehat{q}_m^{(n+1)},-\widehat{\epsilon q}_m^{(n+1)}]={\tt FastTwoSum}(\widehat{q}_m^{(n+1)},-\widehat{\epsilon q}_m^{(n+1)})$\\
 \indent~~\indent~~~~ $[t,\mu_3]={\tt DivRem}(\widehat{q}_{m+1}^{(n)},\widehat{q}_{m}^{(n+1)})$\\
  \indent~~\indent~~~~ $[\widehat{e}_{m}^{(n+1)},\mu_4]={\tt TwoProd}(t,\widehat{e}_{m}^{(n)})$\\
  \indent~~\indent~~~~ $ \widehat{\epsilon e}_{m}^{(n+1)}=(\mu_3\otimes\widehat{e}_m^{(n)}\oplus\mu_4\otimes\widehat{q}_m^{(n+1)}\oplus\widehat{\epsilon e}_m^{(n)}\otimes \widehat{q}_{m+1}^{(n)}\oplus\widehat{\epsilon q}_{m+1}^{(n)}\otimes \widehat{e}_m^{(n)}\ominus\widehat{\epsilon q}_m^{(n+1)}\otimes\widehat{e}_{m}^{(n+1)})\oslash \widehat{q}_m^{(n+1)}$\\
  \indent~~\indent~~~~ $[\widehat{e}_{m}^{(n+1)},-\widehat{\epsilon e}_{m}^{(n+1)}]={\tt FastTwoSum}(\widehat{e}_{m}^{(n+1)},-\widehat{\epsilon e}_{m}^{(n+1)})$\\
\indent~~\indent{\bf end}\\
\indent~~\indent{\bf if}\\
\indent~~\indent~~~~ $\max\limits_{m+n}\{\widehat{e}_m^{(n+1)}\}\leq \texttt{TOL}$, {\bf break}\\
\indent~~\indent{\bf end}\\
\indent~~{\bf end}
\end{algor}

%-%-------References-----------------

\section*{References}
\bibliographystyle{elsarticle-num}
%\bibliography{Compqd-v2}

\end{document}